\newcommand{\al}{\alpha}
\newcommand{\be}{\beta}
\newcommand{\ga}{\gamma}
\newcommand{\la}{\lambda}
\newcommand{\de}{\delta}
\newcommand{\eps}{\varepsilon}
\newcommand{\bx}{\bar x}
\newcommand{\by}{\bar y}
\newcommand{\iv}{^{-1} }
\newcommand {\R} {\mathbb R}
\newcommand {\N} {\mathbb N}
\newcommand {\B} {\mathbb B}
\newcommand {\Sp} {\mathbb S}
\newcommand {\gph} {{\rm gph}\,}
\newcommand {\dom} {{\rm dom}\,}
\newcommand {\cl} {{\rm cl}\,}
\newcommand {\co} {{\rm co}\,}
\newcommand{\TO}[1]{\stackrel{#1}{\to}}
\newcommand{\toto}{\rightrightarrows}
\def\es{\varnothing}
\def\LHS{left-hand side}
\def\SVM{set-valued mapping}
\def\Fr{Fr\'echet}
\newcommand{\norm}[1]{\left\Vert#1\right\Vert}
\newcommand{\magenta}[1]{\textcolor{magenta}{#1}}
\newcommand{\red}[1]{\textcolor{red}{#1}}
\newcommand{\ang}[1]{\left\langle #1 \right\rangle}
\newcommand{\qdtx}[1]{\quad\mbox{#1}\quad}
\newcommand{\AND}{\quad\mbox{and}\quad}
\newcounter{mycount}
\def\cnta{\setcounter{mycount}{\value{enumi}}}
\def\cntb{\setcounter{enumi}{\value{mycount}}}
\newcommand{\AK}[1]{\todo[inline]{AK {#1}}}
\newcommand{\rg}{{\rm rg}[F](\bx,\by)}
\newcommand{\srg}{{\rm srg}[F](\bx,\by)}
\newcommand{\lip}{{\rm lip}[F](\bx,\by)}
\newcommand{\clm}{{\rm clm}[F](\bx,\by)}
\newcommand\Rad[2]{{\rm rad[{#1}]}_{{\rm #2}}F(\bx,\by)}
\renewcommand{\rg}{{\rm{rg}}\,F(\bx,\by)}
\renewcommand{\srg}{{\rm{srg}}\,F(\bx,\by)}
\renewcommand{\lip}{{\rm{lip}}\,}
\renewcommand{\clm}{{\rm{clm}}\,}
\newcommand{\gF}{\tilde\gph F}
\newcommand{\ovclm}{fclm\,}
\newcommand{\HG}[1]{\todo[inline,color=green!40]{HG {#1}}}
\newcommand{\ssstar}{semismooth* }
\newcommand{\extclm}{firmly calm}
\newcommand{\email}[1]{{\tt #1}}
\newtheorem{theorem}{Theorem}[section]
\newtheorem{proposition}[theorem]{Proposition}
\newtheorem{remark}[theorem]{Remark}
\newtheorem{lemma}[theorem]{Lemma}
\newtheorem{corollary}[theorem]{Corollary}
\newtheorem{definition}[theorem]{Definition}
\newtheorem{example}[theorem]{Example}
\begin{document}
\title{
Radius Theorems for Subregularity in Infinite Dimensions
\if{
\AK{25/04/22.
Shall we keep it plural?}
\thanks{
The second author benefited
from the support of the Australian Research Council, project DP160100854, the European Union's Horizon 2020 research and innovation
programme under the Marie Sk{\l }odowska--Curie Grant Agreement No. 823731
CONMECH.
}
}\fi
}
\author{
Helmut Gfrerer\thanks{
Institute of Computational Mathematics, Johannes Kepler University Linz, A-4040 Linz, Austria,
\email{helmut.gfrerer@jku.at},
ORCID 0000-0001-6860-102X
}
\and
Alexander Y. Kruger\thanks{Corresponding author. Centre for Informatics and Applied Optimization, Federation University, Ballarat, Australia;
RMIT University, Melbourne, Australia,
\email{akrugeremail@gmail.com},
ORCID 0000-0002-7861-7380
}}

\maketitle

\vspace{-1cm}
\begin{center}\emph{Dedicated to the memory of our friend and colleague Professor Asen Dontchev}
\end{center}

\begin{abstract}
The paper continues our previous work \cite{DonGfrKruOut20} on the radius of subregularity
that was initiated by Asen Dontchev.
We extend the results of \cite{DonGfrKruOut20} to general Banach/Asplund spaces and to other classes of perturbations, and sharpen the coderivative tools used in the analysis of the robustness of \emph{well-posedness} of mathematical problems and related \emph{regularity} properties of mappings involved in the statements.
We also expand the selection of classes of perturbations, for which the formula for the radius of strong subregularity is valid.
\end{abstract}

{\bf Key words.} subregularity; \and generalized differentiation; \and radius theorems

{\bf AMS Subject classification.} 49J52; \and 49J53; \and 49K40; \and 90C31



\section{Introduction}
\if{
The paper continues our previous work \cite{DonGfrKruOut20} on the radius of subregularity in finite dimensions that was initiated by Asen Dontchev.
We extend the results of \cite{DonGfrKruOut20} to general Banach/Asplund spaces and to other classes of perturbations, and sharpen the coderivative tools used in the analysis of the robustness of \emph{well-posedness} of mathematical problems and related \emph{regularity} properties of mappings involved in the statements of the problems.
}\fi

The paper continues the line of ``radius of good behaviour'' theorems initiated by Dontchev, Lewis \& Rockafellar \cite{DonLewRoc03} in 2003, and aiming at quantifying the ``distance" from a given \emph{well-posed} problem to the set of ill-posed problems of the same kind.
Radius theorems go further than just establishing stability of a problem: they provide quantitative estimates of how far the problem can be perturbed before well-posedness is lost.
This is of significant importance, e.g., for computational methods.

Precursors of radius theorems can be traced back to the \emph{Eckart--Young theorem} \cite{EckYou36} dated 1936, which says that,
for any nonsingular $n \times n$ matrix $A$, it holds
\begin{gather*}
\inf_{B \in L(\R^n,\R^n)}\{\|B\| \mid A+B \mbox{ singular}\}
= \frac{1}{\|A^{-1}\|},
\end{gather*}
where $L(\R^n,\R^n)$ denotes the set of all $n\times n$ matrices, and $\|\cdot\|$ is the usual operator norm.
It gives the exact value for the distance from a given nonsingular square matrix to the set of all singular square matrices over the class of perturbations by arbitrary square matrices, with the distance being measured by the norm of a perturbation matrix.
This theorem  is connected with the  \emph{conditioning} of a matrix.
We refer the readers to the monograph \cite{BurCuc13} for a  broad coverage of the mathematics around \emph{condition numbers} and conditioning, and their role in numerical algorithms.

Far reaching generalizations of the Eckart--Young theorem were established in Dont\-chev, Lewis \& Rockafellar \cite{DonLewRoc03} for the fundamental property of \emph{metric regularity}, and then in Dontchev \& Rockafellar \cite{DonRoc04} for the important properties of \emph{strong metric regularity} and
\emph{strong metric subregularity} of set-valued mappings; see also \cite[Section~6A]{DonRoc14} and \cite[Section~5.4.1]{Iof17}.
The authors of \cite{DonLewRoc03,DonRoc04} considered perturbations over the classes of affine, Lipschitz continuous and calm single-valued functions, used Lipschitz and calmness moduli of the perturbation functions to measure the distance, and in finite dimensions established exact formulas for the radii in terms of the moduli of the respective regularity properties.
In general Banach spaces, lower bounds (providing sufficient conditions for the stability of the properties) were obtained
\cite{Iof01,NgaThe08} in terms of the respective regularity moduli as well as exact formulas in some particular cases \cite{DonLewRoc03,Mor04,CanDonLopPar05}.
The definitions of the mentioned regularity properties and the radius theorems from \cite{DonLewRoc03,DonRoc04} are collated in \cref{D1.1,T1.2}, respectively.

There have been several attempts to study stability of metric regularity with respect to set-valued perturbations under certain assumptions either of sum-stability or on the diameter of the image of perturbation mappings \cite{NgaTroThe14,AdlCibNga15,HeXu22}.
In line with the general pattern of radius theorems in \cite{DonLewRoc03,DonRoc04}, certain radius theorems for monotone mappings have been established recently in Dont\-chev, Eberhard \& Rockafellar~\cite{DonEbeRoc19}.

One thing strikes the eye of the reader of \cite{DonLewRoc03,DonRoc04}: the absence of any radius estimates for another fundamental regularity property of \SVM s, that of (not strong) \emph{metric subregularity} (see \cref{D1.1}(ii)).
On the contrary,
it was shown in \cite{DonRoc04} that the radius estimates similar to those in \cref{T1.2} do not hold for metric subregularity.
In particular, the class of affine functions is not appropriate for estimating the radius of subregularity, neither is the conventional subregularity modulus.
The main reason of the metric subregularity property being quite different and difficult to study lies in the well-known fact that, unlike its better studied siblings, it
is not always stable.

The properties of metric subregularity and strong metric subregularity are key tools in analyzing convergence rates of numerical algorithms.
From the enormous number of papers on this subject we mention here two very recent ones \cite{LukThaTam18, YeYuaZenZha21}, where it is shown that metric subregularity at the solution yields linear convergence rates of several algorithms for solving generalized equations.
On the other hand, it is demonstrated in \cite{LukTebTha20} that metric subregularity is even necessary for the linear convergence of a certain class of algorithms.
Further, it is shown in \cite{CibDonKru18} that strong metric subregularity ensures superlinear convergence of the Josephy--Newton method \cite{Jos79} and some of its variants.

For a long time, studies of stability of metric subregularity have been limited to that of \emph{error bounds} of (mainly convex or almost convex) extended-real-valued functions.
(The latter property is equivalent to subregularity of the corresponding epigraphical multifunctions.)
Some
sufficient and necessary conditions for the stability of error bounds
have been obtained
in \cite{LuoTse94,ZheNg05,NgaKruThe10,ZheWei12, KruLopThe18,ZheNg19}.
We mention the \emph{radius of error bounds} formulas and estimates for several classes of perturbations in \cite
{KruLopThe18}.
Stability of metric subregularity for general \SVM s under smooth perturbations in finite dimensions has been studied in \cite{GfrOut16.2}.
In the very resent paper by Zheng \& Ng \cite{ZheNg21}, stability properties
of metric subregularity is studied, mainly under the assumption that the perturbations are normally regular at the point under consideration.
In particular, the authors show that, for a
\SVM\ between Asplund spaces,
which is either metrically regular or strongly subregular at a reference point,
metric subregularity is stable with respect to small calm subsmooth perturbations.
In some cases, they provide radius-type estimates.
Some radius estimates for a special mapping defined by a system of linear inequalities have been established in \cite{CamCanLopPar}.

The fundamental results from \cite{DonLewRoc03,DonRoc04} motivated Dontchev, Gfrerer et al. in their recent paper \cite{DonGfrKruOut20}, restricted to finite dimensions, to pursue another approach and employ other tools when studying stability of metric subregularity.
Instead of the conventional subregularity modulus, several new ``primal-dual'' subregularity constants are used in \cite{DonGfrKruOut20} for estimating radii of subregularity.
Besides the standard class of Lipschitz continuous functions, semismooth and continuously differentiable perturbations are examined.
In the case of Lipschitz continuous perturbations, lower and upper bounds for the radius of subregularity are established, which differ by a factor of at most two.
The radii of subregularity over the classes of semismooth and continuously differentiable functions are shown to coincide, and the exact formula is obtained; see \cref{T1.3}.

In this paper, we extend the results of \cite{DonGfrKruOut20} to general Banach/Asplund spaces.
We consider the standard class of Lipschitz continuous perturbations as well as three new important for applications classes of functions: \emph{firmly calm} (see \cref{D1.2}), Lipschitz \emph{semismooth*} (see \cref{D1.3,D1.4}), and firmly calm semismooth*.
We also sharpen the primal-dual tools used in the analysis.

The motivation to study semismooth* perturbations stems from the successful application of the recently introduced semismooth* Newton method \cite{GfrOut21, GfrOut22} to generalized equations in finite dimensions; see \cite{GfrManOutVal, GfrOutVal}.
In a first attempt to generalize the semismooth* Newton method,  we carry over the notion of semismooth* sets and mappings to infinite dimensions and state some basic properties.

In our main \cref{T3.2}, we establish upper bounds for the radii of metric subregularity over all classes of perturbations, except Lipschitz semismooth*, in general Banach spaces, as well as lower bounds over all four classes in Asplund spaces.

In the case of Lipschitz continuous perturbations, the bounds differ by a factor of at most two.
As a byproduct, this gives a characterization of stability of subregularity under
small Lipschitz continuous perturbations.
In finite dimensions, the bounds reduce to those in \cite[Theorem~3.2]{DonGfrKruOut20}.
For firmly calm perturbations, in Asplund spaces we obtain
an exact formula for the radius, which however gives a positive radius only in exceptional cases.
This has motivated us to consider a more narrow class of firmly calm semismooth* perturbations, imposing additionally
a new {\em semismooth*} property.
For this class, we again have lower and upper bounds for the radius.

The proofs of the lower bounds for the radii of metric subregularity are based on the application of the conventional quantitative sufficient conditions for subregularity from  \cite{Gfr11,Kru15} (see \cref{T3.1}(ii)) coupled with a sum rule for set-valued mappings in \cref{T2.2}(ii).
The proofs of the upper bounds employ a rather non-conventional result in \cite[Theorem~3.2(2)]{Gfr11} (see \cref{T3.1}(i)), and are much more involved.

Using similar techniques, we briefly consider the property of strong metric subregularity, and establish in \cref{T3.3} an exact formula for the radius over the classes of calm, firmly calm and firmly calm semismooth* perturbations in general Banach spaces, thus, strengthening the correspondent assertion in \cite[Theorem~4.6]{DonRoc04}.

As mentioned above, we consider in this paper two new properties arising naturally in radius of subregularity considerations: \emph{firm calmness} and \emph{semismoothness*}.
The first property is defined for single-valued functions
and requires a function to be calm at a reference point and Lipschitz continuous around every other point nearby;
see \cref{D1.2}.
This property has the potential to be used outside the scope of the current topic to substitute the stronger local Lipschitz continuity property in some studies.
The semismoothness* property is defined for sets and \SVM s (see \cref{D1.3,D1.4}) as
an extension of the corresponding definitions in finite dimensions  introduced recently in \cite{GfrOut21}.
It is motivated by the formula for the radius of subregularity in \cite{DonGfrKruOut20}, where the conventional semismooth perturbations were considered.
The semismoothness* property is weaker than the
conventional semismoothness.
It plays an important role in \cite{GfrOut21} when constructing Newton-type methods for generalized equations.
Some characterizations of semismoothness* of sets and mappings as well as sufficient conditions ensuring the property are established.
In particular, it is shown that a positively homogenous function is semismooth*; see \cref{CorSS}.

The structure of the paper is as follows.
The next \cref{Pre} provides some preliminary material used throughout the paper.
This includes basic notation and general conventions, the definition of the new firm calmness property, definitions of the classes of perturbations typically used in stability analysis and corresponding radii, and certain primal-dual subregularity constants used in the radius estimates.
In \cref{S2}, we establish certain sum rules for mappings between normed (in most cases Asplund) spaces that are used in the sequel.
\cref{SS} is dedicated to new concepts of semismooth* sets and mappings, being infinite-dimensional extensions of the corresponding properties introduced recently in \cite{GfrOut21}.
In \cref{S3}, we formulate several new estimates and formulas for the radii of subregularity and strong subregularity, and introduce new primal-dual tools for quantitative characterization of subregularity of mappings and its stability, and potentially other related properties.
The proofs of the main results are in the separate \cref{S4}.

\section{Preliminaries}
\label{Pre}

\paragraph{Notation and basic conventions}

Our basic notation is standard; see \cite{RocWet98,Mor06.1,DonRoc14}.
Throughout the paper, if not explicitly stated otherwise, we assume that
$X$ and $Y$ are normed or, more specifically, Banach or Asplund
spaces.
Their topological duals are denoted by $X^*$ and $Y^*$, respectively, while $\langle\cdot,\cdot\rangle$ denotes the bilinear form defining the pairing between the spaces.
Recall that a Banach space is \emph{Asplund} if every continuous convex function on an open convex set is Fr\'echet differentiable on a dense subset, or equivalently, if the dual of each
separable subspace is separable \cite{Phe93}.
All reflexive, particularly, all finite dimensional Banach spaces are Asplund.

We normally use the letters $x$ and $u$, often with subscripts, for elements of $X$, and the letters $y$ and $v$
for elements of $Y$.
Elements belonging to the corresponding dual spaces are marked with $*$ (i.e., $x^*$, $y^*$, etc.).
The open unit balls in a normed space and its dual are denoted by $\B$ and $\B^*$, respectively, while $\Sp$ and $\Sp^*$ stand for the unit spheres (possibly with a subscript denoting the space).
$B_\de(x)$ denotes the
{open}
ball with radius $\de>0$ and centre $x$.
Norms and distances in all spaces are denoted by the same symbols
$\|\cdot\|$ and $d(\cdot,\cdot)$, respectively.
$d(x,\Omega):=\inf_{\omega\in{\Omega}}\|x-\omega\|$ is the point-to-set distance from
$x$ to $\Omega$.

Symbols $\R$, $\R_+$ and $\N$ denote the sets of all real numbers, all nonnegative real numbers and all positive integers, respectively.
We use the following conventions:
$\inf\es_{\R}=+\infty$ and
$\sup\es_{\R_+}=0$, where $\es$ (possibly with a subscript) denotes the empty subset (of a given set).

If not specified otherwise, products of primal and dual normed spaces are assumed to be equipped with the sum and maximum norms, respectively:
\begin{gather}
\notag
\norm{(x,y)}=\norm{x}+\norm{y}, \quad (x,y)\in X\times Y,
\\
\label{dnorm}
\norm{(x^*,y^*)}=\max\{\norm{x^*},\norm{y^*}\}, \quad (x^*,y^*)\in X^*\times Y^*.
\end{gather}

We denote by $F:X\rightrightarrows Y$ a {\em
set-valued mapping} acting from $X$ to subsets of $Y$.
We write $f:X\to Y$ to denote a {\em single-valued
function}.
The graph and domain of
$F$ are defined as
$\gph F:= \{ (x,y)\in X\times Y\mid y \in F(x) \}$ and
$\dom F:= \{x \in X\mid F(x)\neq \es\}$, respectively.
The  inverse of $F$  is the mapping $y\mapsto  F^{-1}(y):=\{x\in X\mid y\in F(x)\}.$
$L(X,Y)$ denotes the space of all linear continuous maps $X\to Y$ equipped with the conventional operator norm.


\paragraph{Regularity, subregularity, Aubin property, and calmness}

The regularity and Lipschitz-like continuity properties of set-valued mappings in the definition below
are defined in metric terms.
They have been widely used in variational analysis and optimization, and well studied; cf. \cite{RocWet98,KlaKum02,DonLewRoc03,DonRoc04,Mor06.1, Gfr11,Gfr13, DonRoc14, Kru15,NgaPha15,DurStr16,Ude16,Zhe16,ZheZhu16,Iof17, CibDonKru18,Mar18, DonGfrKruOut20}.
For brevity, we drop the word ``metric'' from the names of the regularity properties.
For the definition of a single-valued localization of a \SVM\ we refer the readers to \cite{DonRoc14}.

\begin{definition}
\label{D1.1}
Let $F:X\rightrightarrows Y$ be a mapping between metric spaces, and $(\bx,\by)\in\gph F$.
\begin{enumerate}
\item
The mapping $F$ is
regular at $(\bx,\by)$ if there exists an $\al>0$ such that
\begin{equation}
\label{D1.5-1}
\al d(x, F^{-1}(y)) \leq d(y, F(x))
      \quad \mbox{for all} \quad (x,y)\;\;\mbox{near}\;\;(\bx,\by).
\end{equation}
If, additionally, $F\iv$ has a single-valued localization around $(\by,\bx)$,
then
$F$ is
strongly regular at $(\bx,\by)$.
\if{
\HG{09/05/22. I think it is enough to require: If, additionally, $F\iv$ has a single-valued localization at $(\by,\bx)$ then $F$ is ...}
\AK{18/5/22.
The existence of a (not necessarily continuous) single-valued localization automatically follows from \cref{D1.5-1}.
Am I missing something?}
\HG{25/5/22.
Consider the mapping $F:\R\toto\R$, $F(x)=x-\R_-$ at $(\bx,\by)=(0,0)$ which is obviously regular, but $F\iv$ does not have a single-valued localization. The Lipschitz continuity of a single-valued localization follows automatically from \cref{D1.5-1}, see also \cite[Proposition 3G.1]{DonRoc14}.}
}\fi
\item
The mapping $F$ is
subregular at $(\bx,\by)$ if there exists an $\al>0$ such that
\begin{equation}
\label{D1.5-2}
\al d(x, F^{-1}(\by)) \leq d(\by, F(x))
      \quad \mbox{for all} \quad x\;\;\mbox{near}\;\;\bx.
\end{equation}
If, additionally,
$\bx$ is an isolated point of $F\iv(\by)$, then
$F$ is
strongly subregular at $(\bx,\by)$.
\item
The mapping $F$ has the {Aubin property} at $(\bx,\by)$
if there exists an $\al>0$ such that
\begin{align}
\label{D1.1-1}
d(y,F(x))\le\al\;d(x,x')
\quad \mbox{for all} \quad x,x'\;\;\mbox{near}\;\;\bx\;\;\mbox{and}\;\; y\in F(x')\;\;\mbox{near}\;\;\by.
\end{align}
\item
The mapping $F$ is calm at $(\bx,\by)$
if there exists an $\al>0$ such that
\begin{align}
\label{D1.1-2}
d(y,F(\bx))\le\al\;d(x,\bx)
\quad \mbox{for all} \quad x\;\;\mbox{near}\;\;\bx\;\;\mbox{and}\;\; y\in F(x)\;\;\mbox{near}\;\;\by.
\end{align}
\end{enumerate}
\end{definition}

We denote the (possibly infinite) supremum of all $\al$ satisfying \cref{D1.5-1} (resp., \cref{D1.5-2}) by $\rg$ (resp., $\srg$), and call it the \emph{regularity}
(resp., \emph{subregularity}) \emph{modulus} of $F$ at $(\bx,\by)$.
Thus,
\begin{gather}
\label{rg}
\rg=\liminf_{(x,y)\to(\bx,\by),\;x\notin F\iv(y)}
\frac{d(y,F(x))}{d(x,F^{-1}(y))},
\\
\label{srg}
\srg=\liminf_{F^{-1}(\by)\not\ni x\to\bx}
\frac{d(\by,F(x))}{d(x,F^{-1}(\by))}.
\end{gather}
The case $\rg=0$ ($\srg=0$) indicates the absence of
regularity
(subregularity).

We denote the (possibly infinite) infimum of all $\al$ satisfying \cref{D1.1-1} (resp., \cref{D1.1-2}) by $\lip F(\bx,\by)$ (resp., $\clm F(\bx,\by)$), and call it the \emph{Lipschitz} (resp., \emph{calmness}) \emph{modulus} of $F$ at $(\bx,\by)$.
Thus,
\begin{gather*}
\lip F(\bx,\by)=\limsup_{\substack{x,x'\to\bx,\;x\ne x'\\ F(x')\ni y\to\by}}
\frac{d(y,F(x))}{d(x,x')},\quad
\clm F(\bx,\by)=\limsup_{\substack{\bx\ne x\to\bx\\ F(x)\ni y\to\by}}
\frac{d(y,F(\bx))}{d(x,\bx)}.
\end{gather*}

It is easy to see that
\begin{gather*}
(\rg)\iv=\lip F\iv(\by,\bx),\quad
(\srg)\iv=\clm F\iv(\by,\bx),
\end{gather*}
and $F$ is
regular (resp., subregular) at $(\bx,\by)$ if and only if $F\iv$ has the {Aubin property} (resp., is calm) at $(\by,\bx)$.

If $\bx$ is an isolated point of $F\iv(\by)$, formula \cref{srg} admits a simplification:
\begin{align}
\label{ssrg}
\srg=\liminf_{\bx\ne x\to\bx}
\frac{d(\by,F(x))}{d(x,\bx)}=
\liminf_{\bx\ne x\to\bx,\,y\in F(x)}
\frac{d(y,\by)}{d(x,\bx)}.
\end{align}
If the lower limit in \cref{ssrg} is zero, this indicates the absence of \emph{strong} subregularity.
Note that in this case the lower limit in \cref{srg} can still be nonzero (when $\bx$ is not an isolated point of $F\iv(\by)$), i.e. $F$ can be (not strongly) subregular at $(\bx,\by)$.
This is the only case when $\srg$ defined by \cref{srg} is not applicable for characterizing strong subregularity.

In the case of a single-valued function $f:X\to Y$, we write simply $\lip f(\bx)$ and $\clm f(\bx)$ to denote its Lipschitz and calmness moduli, i.e.,
$$
\lip f(\bx)= \limsup_{\substack{x,x'\to\bx,\,x\neq x'}}     \frac{d(f(x),f(x'))}{d(x,x')},
\quad
\clm f(\bx)= \limsup_{\substack{x\to\bx,\,x\neq\bx}}     \frac{d(f(x),f(\bx))}{d(x,\bx)}.
$$
If $\lip f(\bx)<+\infty$ (resp., $\clm f(\bx)<+\infty$), this indicates that $f$ is \emph{Lipschitz continuous} near $\bx$ (resp., \emph{calm} at $\bx$).
It obviously holds $0\le\clm f(\bx)\le\lip f(\bx)$.
Hence, if $f$ is \emph{Lipschitz continuous} near $\bx$, it is automatically \emph{calm} at $\bx$.
Simple examples show that
the converse implication does not hold in general.
If $f$ is affine, then obviously
$\clm f(\bx)=\lip f(\bx)$.

We are going to use a new continuity property lying strictly between the calmness and Lipschitz continuity.
It arises naturally when dealing with radius of subregularity estimates, and is likely to be of importance in other areas of analysis.

\begin{definition}
\label{D1.2}
A function $f:X\to Y$ is
\extclm\
at $\bx\in X$ if it is calm at $\bx$ and Lipschitz continuous around every $x\ne\bx$ near $\bx$.
\end{definition}

Note that a function which is \extclm\
at a point is not necessarily Lipschitz continuous near this point.
\if{
\AK{14/04/22.
The term \extclm\ has been suggested by Jiri.}
\AK{15/02/22.
I think this is an important property and is likely to be used in subsequent publications.
That is why I have decided to give a formal definition for possible future references.
However, it needs a better name.
First, I think grammatically it should be `extendedly calm', and this does not sound nice.
Second, an extension often means weakening of a property.
Possible options: `strongly calm', `supercalm', `weakly Lipschitz', `quasi-Lipschitz'.
None is perfect.
}
}\fi

\paragraph{Normal cones and coderivatives}

Dual estimates of the regularity radii require certain dual tools -- normal cones and coderivatives; cf. \cite{Kru81.1,Kru81.2,Kru03,Mor06.1}.

Given a subset $\Omega\subset X$, a point $\bx\in\Omega$, and a number $\eps\geq0$, the sets
\begin{gather}
\label{nc}
N_{\Omega,\eps}(\bx):=\left\{x^*\in X^*\,\Big |\,\limsup_{\Omega\ni x\to\bx,\; x\ne\bx}\frac{\ang{x^*,x-\bx}}{\norm{x-\bx}}\leq \eps\right\},
\\
\label{lnc}
\overline N_{\Omega}(\bx):=
\limsup_{\Omega\ni x\to\bx,\;\eps\downarrow0} N_{\Omega,\eps}(x)
\end{gather}
are called, respectively, the {\em set of \Fr\ $\eps$-normals} and the \emph{limiting normal cone} to $\Omega$ at $\bx$.
If $\eps=0$, the set \cref{nc} reduces to the \Fr\ normal cone $N_{\Omega}(\bx)$.
The $\limsup$ in \cref{lnc} is the sequential upper limit (in the sense of Painlev\'e--Kuratowski) with respect to the strong topology in $X$ and the weak* topology in $X^*$.
If $\Omega$ is convex, both $N_{\Omega}(\bx)$ and $\overline N_{\Omega}(\bx)$ coincide with the conventional normal cone in the sense of convex analysis.
If $X$ is Asplund, $\eps$ in \cref{lnc} can be dropped.
If $\overline N_\Omega(\bx)={N}_\Omega(\bx)$, then
$\Omega$ is said to be \emph{normally regular} at $\bx$.

Given a mapping $F:X\rightrightarrows Y$, a point $(\bx,\by)\in\gph F$, and a number $\eps\geq0$, the mappings $Y^*\toto X^*$ defined for all $y^*\in Y^*$ by
\begin{gather}
\label{cd}
D^*_\eps F(\bx,\by)(y^*):=\{x^*\in X^*\mid (x^*,-y^*)\in N_{\gph F,\eps}(\bx,\by)\},
\\
\label{lcd}
\overline D^*F(\bx,\by)(y^*):=\{x^*\in X^*\mid (x^*,-y^*)\in \overline N_{\gph F}(\bx,\by)\}
\end{gather}
are called, respectively, the {\em \Fr\ $\eps$-coderivative} and the \emph{limiting coderivative}
of $F$ at $(\bx,\by)$.
If $\eps=0$, the first one reduces to the {\em \Fr\ coderivative} ${D}^*F(\bx,\by)$.
In the case of a (single-valued) function $f:X\to Y$, we simply write $D^*_\eps f(\bx)(y^*)$ and $D^*f(\bx)(y^*)$ for all $y^*\in Y^*$.

Certain ``directional'' versions of \cref{lcd,lnc} have been introduced in \cite{Gfr11,Gfr13,DonGfrKruOut20}: for all $u\in X$ and $y^*\in Y^*$,
\begin{gather}
\notag
\overline N_{\Omega}(\bx;u):=\limsup_{\Omega\ni x\to\bx,\,t(x-\bx)\to u,\,t>0} N_{\Omega}(x),
\\
\label{dlcd}
\widehat{D}F(\bx,\by)(u,y^*):=\{(x^*,v)\in X^*\times Y\mid
(x^*,-y^*)\in\overline{N}_{\gph F} ((\bx,\by);(u,v))\}.
\end{gather}
They are
called, respectively, the {\em directional limiting normal cone}
to $\Omega$ at $\bar{x}$ in the direction $u\in X$,
and the {\em primal-dual derivative} of $F$ at $(\bx,\by)$.
Observe that $\widehat{D}F(\bx,\by)$ acts from $X\times Y^*$ to $X^*\times Y$.
This explains the name.

It is well known \cite{Kru88,Mor06.1} that in Asplund spaces, when $\gph F$ is closed near $(\bx,\by)$, the regularity modulus \cref{rg} admits a dual representation:
\begin{align}
\label{rgd}
\rg=\liminf_{\substack{\gph F\ni(x,y)\to(\bx,\by)\\x^*\in{D}^*F(x,y)(S_{Y^*})}}\|x^*\|.
\end{align}
In finite dimensions, representation \cref{rgd} can be simplified:
\begin{align}
\label{rgd+}
\rg=\inf_{x^*\in\overline{D}^*F(\bx,\by)(S_{Y^*})}\|x^*\|.
\end{align}

Unlike the regularity modulus \cref{rg}, its subregularity counterpart \cref{srg} does not in general possess dual representations.
This is another reflection of the fact that the property lacks robustness.
Several dual (and primal-dual) subregularity constants have been used in \cite{Gfr11,Kru15,DonGfrKruOut20} to provide estimates of the subregularity modulus \cref{srg} as well as sufficient (and in some cases also necessary) conditions for subregularity.
None of them is in general equal to \cref{srg}.

\paragraph{Radius theorems}


Below we formulate the key radius theorems from \cite{DonLewRoc03,DonRoc04,DonGfrKruOut20} which form the foundation for the results in this paper.
%
We consider a mapping $F:X\toto Y$ between normed spaces and the following classes of perturbations of $F$ near a point $\bx\in X$:
\begin{align}
\label{Fclm}
\mathcal{F}_{clm}&:=\{f:X\to Y\mid f \mbox{ is calm at }\bx
\},
\\
\label{Flip}
\mathcal{F}_{lip}&:=\{f:X\to Y\mid f \mbox{ is Lipschitz continuous around }\bx
\},
\\
\label{FC1}
\mathcal{F}_{C^1}&:=\{f:X\to Y\mid f \mbox{ is }C^1\mbox{ around }\bx\},
\\
\label{Fss}
\mathcal{F}_{ss}&:=\{f:X\to Y\mid f \mbox{ is Lipschitz continuous around }\bx\mbox{ and semismooth at }\bx\},
\\
\label{Flin}
\mathcal{F}_{lin}&:=\{f:X\to Y\mid f \mbox{ is affine}\}.
\end{align}
We refer the readers to \cite{DonRoc14} for the definition of
\emph{semismooth} functions (in finite dimensions).
We obviously have
$\mathcal{F}_{lin}\subset\mathcal{F}_{C^1}\subset \mathcal{F}_{ss}\subset\mathcal{F}_{lip}\subset \mathcal{F}_{clm}$.
More classes are introduced in \cref{S3}.
Without loss of generality,
we will assume that perturbation functions $f$
in the above definitions
satisfy
$f(\bx)=0$.
(Thus, the functions in $\mathcal{F}_{lin}$ are actually linear.)
The
radii at a point $(\bx,\by)\in\gph F$ are defined as follows:
\begin{gather}
\label{rad}
\Rad{Property}{\mathcal{P}}:= \inf_{f\in\mathcal{F}_{\mathcal{P}}}\{\text{mod}\,f(\bx) \mid
F+f \mbox{
fails
the `Property' at } (\bx,\by)\}.
\end{gather}
Here, `Property' stands for `regularity', `subregularity', `strong regularity' or `strong subregularity'.
For brevity, we will write `R', `SR', `sR' and `sSR', respectively, in the notation of the radius.
$\mathcal{P}$ indicates the class of perturbations: \emph{clm, lip, $C^1$, ss} or \emph{lin}; more classes will be considered in \cref{S3}.
mod\,$f(\bx)$ identifies the modulus of $f$ at $\bx$ used in the computation of a particular radius: it can be either $\lip f(\bx)$ or $\clm f(\bx)$.
The first one is used when considering perturbations from $\mathcal{F}_{lip}$ or any its subclass, and the latter for $\mathcal{F}_{clm}$ and potentially other classes containing non-Lipschitz functions.
For instance, the definition of the radius of regularity over the class of Lipschitz continuous perturbations looks like this:
\begin{align*}
\Rad{R}{lip}:=& \inf_{f\in\mathcal{F}_{lip}}\{\lip f(\bx) \mid
F+f \mbox{ is not regular at } (\bx,\by)\}.
\end{align*}

The next theorem combines \cite[Theorem~1.5]{DonLewRoc03} and \cite[Theorems~4.6 and 5.12]{DonRoc04}.

\begin{theorem}
\label{T1.2}
Let $X$ and $Y$ be Banach spaces,
$F:X\rightrightarrows Y$ a mapping
{with closed graph},
and $(\bx,\by)\in\gph F$.
Then
\begin{gather}
\label{T1.2-1}
\Rad{R}{lin}\geq
\Rad{R}{lip}\geq\rg,
\\
\label{T1.2-2}
\Rad{sSR}{lin}\geq
\Rad{sSR}{clm}\geq\srg.
\end{gather}
If $\dim X<\infty$ and $\dim Y<\infty$, then
\begin{gather}
\label{T1.2-3}
\Rad{R}{lin}=\Rad{R}{lip}=\rg,
\\
\label{T1.2-4}
\Rad{sSR}{lin}=\Rad{sSR}{clm}=\srg.
\end{gather}
Moreover, the equalities remain valid if $\mathcal{F}_{lin}$ is restricted to affine functions of rank $1$.

If $F$ is strongly regular at $(\bx,\by)$, then
conditions \cref{T1.2-1,T1.2-3}
remain valid with $\Rad{sR}{lip}$ in place of $\Rad{R}{lip}$.
\sloppy
\end{theorem}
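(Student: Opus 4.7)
The plan is to decompose Theorem~1.2 into four logically independent pieces and dispatch each separately: (a) the class-inclusion inequalities; (b) the preservation of regularity and strong subregularity under small perturbations, yielding the lower bounds $\Rad{R}{lip}\ge\rg$ and $\Rad{sSR}{clm}\ge\srg$; (c) the finite-dimensional upper bounds, realized by affine rank-one perturbations; (d) the strong-regularity refinement. Piece (a) is immediate: since $\mathcal{F}_{lin}\subset\mathcal{F}_{lip}$ and $\mathcal{F}_{lin}\subset\mathcal{F}_{clm}$, taking the infimum in \cref{rad} over the smaller family cannot decrease its value, so $\Rad{R}{lin}\ge\Rad{R}{lip}$ and $\Rad{sSR}{lin}\ge\Rad{sSR}{clm}$ follow with no further work.

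For piece (b), I would proceed by an elementary sum estimate. For strong subregularity at $(\bx,\by)$, any $y\in(F+f)(x)$ has the form $y=y'+f(x)$ with $y'\in F(x)$; using $f(\bx)=0$ one obtains
$$
\|y-\by\|\;\ge\;\|y'-\by\|-\|f(x)-f(\bx)\|.
$$
Dividing by $\|x-\bx\|$ and passing to the liminf in the isolated-point formula \cref{ssrg} yields $\srg(F+f)\ge\srg F-\clm f(\bx)$. Hence any calm $f$ with $\clm f(\bx)<\srg$ keeps $F+f$ strongly subregular, proving $\Rad{sSR}{clm}\ge\srg$. The twin inequality $\Rad{R}{lip}\ge\rg$ is the classical Lyusternik--Graves stability theorem for metric regularity under Lipschitz perturbations, where the closed-graph hypothesis on $F$ is crucial.

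Piece (c) reverses the direction: for any $\al>\rg$ one must construct a single affine function $f$ of rank one with $\lip f(\bx)<\al$ that destroys metric regularity of $F$ at $(\bx,\by)$. In finite dimensions the dual formula \cref{rgd+} furnishes $y^*\in\Sp_{Y^*}$ and $x^*\in\overline{D}^*F(\bx,\by)(y^*)$ with $\|x^*\|<\al$. Choosing, by finite-dimensional duality, a unit vector $v\in Y$ with $\ang{y^*,v}=1$ and setting $f(x):=-\ang{x^*,x-\bx}\,v$ produces a rank-one linear map (after the harmless translation making $f(\bx)=0$) of operator norm $\|x^*\|<\al$; a coderivative sum rule then forces $0\in\overline{D}^*(F+f)(\bx,\by)(y^*)$, so $F+f$ violates the Mordukhovich criterion and fails regularity. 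An analogous but more delicate rank-one construction -- extracting a unit direction $u$ from a sequence $x_n\to\bx$ along which $\|y_n-\by\|/\|x_n-\bx\|<\al$ and aligning a rank-one $f$ with that direction so that $\by\in(F+f)(x_n)$ for infinitely many $n$ -- handles the strong-subregularity case. I expect this verification, namely checking that the explicit rank-one perturbation actually kills the target property, to be the main obstacle: it rests squarely on the finite-dimensional coderivative calculus and has no counterpart in the general Banach setting, which is why the paper has to develop the substantially different machinery of the subsequent sections.

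Finally, piece (d) upgrades $\Rad{R}{lip}\ge\rg$ to $\Rad{sR}{lip}\ge\rg$ under the assumption that $F$ is strongly regular at $(\bx,\by)$. The argument is a Banach-contraction/fixed-point step in the spirit of Robinson's theorem: the Lipschitz single-valued localization of $F^{-1}$ around $(\by,\bx)$ is perturbed by a Lipschitz $f$ with $\lip f(\bx)<\rg$, and a standard contraction shows $(F+f)\iv$ inherits such a localization. The matching finite-dimensional equality comes for free, since the rank-one perturbation already manufactured in (c) destroys plain regularity, and a fortiori strong regularity.
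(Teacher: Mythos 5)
The paper does not prove this theorem: immediately before its statement it says the result ``combines \cite[Theorem~1.5]{DonLewRoc03} and \cite[Theorems~4.6 and 5.12]{DonRoc04},'' and no proof is supplied in the present text. Your proposal is therefore a from-scratch reconstruction, and its four-part decomposition --- (a) trivial class-inclusions, (b) lower bounds via stability of the property under small perturbations, (c) finite-dimensional upper bounds via a rank-one affine perturbation, (d) the strong-regularity variant via a contraction/Robinson-type localization --- is the standard architecture underlying the cited results. Pieces (a), (b) and (d) are sound as sketched: (b) for strong subregularity is essentially \cite[Theorem~2.1]{CibDonKru18}, which the paper itself invokes later in the proof of \cref{T3.3}.

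There is, however, a concrete gap in piece (c) for strong subregularity. You propose to pick $x_n\to\bx$, $y_n\in F(x_n)$ with $\|y_n-\by\|/\|x_n-\bx\|\to\srg$, extract a direction $u$, and build a rank-one affine $f$ ``so that $\by\in(F+f)(x_n)$ for infinitely many $n$.'' That exact membership requires $y_n-\by$ to be a scalar multiple of a single fixed vector for infinitely many $n$, which no rank-one linear map can arrange in general: you only control the limit direction, not the finite-$n$ residuals. The correct argument is weaker and suffices. In finite dimensions pass to a subsequence so that $(x_n-\bx)/\|x_n-\bx\|\to u\in\Sp_X$ and $(y_n-\by)/\|x_n-\bx\|\to v$ with $\|v\|=\srg<\al$; take a norming functional $u^*$ with $\ang{u^*,u}=\|u^*\|=1$ and set $f(x):=-\ang{u^*,x-\bx}\,v$, a rank-one linear map with $\lip f(\bx)=\|v\|<\al$. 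Then $z_n:=y_n+f(x_n)\in(F+f)(x_n)$ and
\begin{equation*}
\frac{z_n-\by}{\|x_n-\bx\|}
=\frac{y_n-\by}{\|x_n-\bx\|}-\Big\langle u^*,\frac{x_n-\bx}{\|x_n-\bx\|}\Big\rangle v
\;\longrightarrow\; v-\ang{u^*,u}\,v=0,
\end{equation*}
so the lower limit in \cref{ssrg} for $F+f$ is zero. As the paper notes after \cref{ssrg}, this already implies $F+f$ is \emph{not} strongly subregular at $(\bx,\by)$, regardless of whether $\bx$ is isolated in $(F+f)^{-1}(\by)$. This gives $\Rad{sSR}{lin}\le\srg$ with a rank-one perturbation, closing the chain.

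One further small point: in the regularity branch of (c) you should make sure the rank-one $f$ also eliminates \emph{strong} regularity to dispose of piece (d)'s equality; that is indeed automatic, since failure of regularity implies failure of strong regularity, as you observe. Beyond these fixes the outline faithfully reconstructs the \cite{DonLewRoc03,DonRoc04} arguments; and you are right that the finite-dimensionality of (c) --- compactness to extract limit directions and the exact coderivative criterion \cref{rgd+} --- is precisely why the present paper must build the entirely different Sections~\ref{S2}--\ref{S4} machinery for the infinite-dimensional subregularity radii.
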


It is important to observe that the radii of regularity and strong regularity are considered in \cref{T1.2} with respect to Lipschitz continuous perturbations, and the regularity modulus is used in the estimates,
while in the case of strong subregularity, calm perturbations and the subregularity modulus are employed.
As observed in \cite[p.~2434]{ZheNg21}, it follows from \cite[Example~1E.5]{DonRoc14} that strong metric regularity is not stable with respect to small calm (even ``$0$-calm'') perturbations.
In fact, {it was shown in \cite[p.~2435]{ZheNg21} that} the perturbed mapping may fail to be even metrically regular.

In \cref{T3.3}, we show
that, when $F$ is strongly subregular at $(\bx,\by)$, the second equality in \cref{T1.2-4} holds in general Banach spaces.
At the same time, in infinite dimensions, the inequality $\Rad{R}{lin}\ge\rg$
in \cref{T1.2-1}
can be strict; cf. \cite[Theorem~5.61]{Iof17}.


Note that \cref{T1.2} says nothing about the fundamental property of (not strong) subregularity, which turns out to be quite different.
The next two examples show that it does not fit into the pattern of the conditions in \cref{T1.2}.
\if{
Similar to the definitions above, the radii of subregularity with respect to perturbations from the classes \cref{Fclm,Flip,Flin} can be defined as follows:
\begin{align}
\label{radSRclm}
\Rad{SR}{clm}:=& \inf_{f\in\mathcal{F}_{clm}}\{\clm f(\bx) \mid
F+f \mbox{ is not subregular at } (\bx,\by)\},
\\
\label{radSRlip}
\Rad{SR}{lip}:=& \inf_{f\in\mathcal{F}_{lip}}\{\lip f(\bx) \mid
F+f \mbox{ is not subregular at } (\bx,\by)\},
\\
\label{radSRlin}
\Rad{SR}{lin}:=& \inf_{f\in\mathcal{F}_{lin}}\{\lip f(\bx) \mid
F+f \mbox{ is not subregular at } (\bx,\by)\}.
\end{align}
Observe that $\lip f(\bx)$ in \cref{radSRlin} can be replaced with the calmness modulus $\clm f(\bx)$.
}\fi

\begin{example}
[Subregularity: perturbations from $\mathcal{F}_{lin}$]
\label{E1.1}
Let a function $F:\R\to\R$ be given by
$F(x)=x$ $(x\in X)$.
By \cref{D1.1}(i), $F$ is (strongly) regular, hence also (strongly) subregular everywhere, and
by \cref{rg,srg} (or \cref{ssrg}), rg\,$F(0,0)=\,$srg\,$F(0,0)=1$.
Any function $f\in\mathcal{F}_{lin}$ is of the form $f(x)=\la x$, where $\la\in\R$.
Hence, $(F+f)(x)=(1+\la)x$.
Thus, $F+f\in\mathcal{F}_{lin}$, and this function is (strongly) regular everywhere for any $\la\ne-1$, while with $\la=-1$, it is the zero function that is neither regular nor strongly subregular.
It follows that {\rm rad[R]}$_{{\rm lin}}F(0,0)=\,${\rm rad[sR]}$_{{\rm lin}}F(0,0)=\,${\rm rad[sSR]}$_{{\rm lin}}F(0,0)=1$, which agrees with \cref{T1.2}.
At the same time, the zero function is trivially subregular (with subregularity modulus equal $+\infty$);
hence,
{\rm rad[SR]}$_{{\rm lin}}F(0,0)=+\infty$.
Thus, for subregularity an analogue of \cref{T1.2-3,T1.2-4} fails.
\end{example}

\begin{example}
[Subregularity: perturbations from $\mathcal{F}_{lip}$ and $\mathcal{F}_{clm}$]
\label{E1.2}
Let $F:\R\to\R$ be the zero function, i.e.,
$F(x)=0$ for all $x\in\R$.
As observed in \cref{E1.1}, $F$ is subregular everywhere, and {\rm srg}\,$F(0,0)=+\infty$.
Consider the function $f\in\mathcal{F}_{lip}$ given by $f(x)=x^2$ $(x\in X)$, and observe that it is not subregular at $(0,0)$, while $\lip f(0)=\clm f(0)=0$.
Hence,
{\rm rad[SR]}$_{{\rm lip}}F(0,0)$=\,{\rm rad[SR]}$_{{\rm clm}}F(0,0)=0$.
Thus, for subregularity an analogue of \cref{T1.2-1,T1.2-2} fails.
\end{example}

In view of \cref{E1.1},
the class of perturbations $\mathcal{F}_{lin}$
does not seem appropriate for estimating the radius of subregularity.
\cref{E1.2} shows that,
if we want to use the natural classes of perturbations $\mathcal{F}_{lip}$ and $\mathcal{F}_{clm}$,
it seems unlikely that the subregularity modulus $\srg$ can be used for estimating the radii of subregularity or, at least, the troublesome zero function (or, more generally, constant functions) should somehow be excluded.

It has been shown in our recent paper \cite{DonGfrKruOut20} that in finite dimensions certain ``primal-dual'' subregularity constants:
\begin{align}
\label{hatsrg}
\widehat{\rm srg}F(\bx,\by):= &\inf_{(x^*,v)\in\widehat{D}F(\bx,\by)(\Sp_{X\times Y^*})} \max\{\norm{v},\norm{x^*}\},
\\
\label{hatsrg+}
\widehat{\rm srg}^+F(\bx,\by):= &\inf_{(x^*,v)\in\widehat{D}F(\bx,\by)(\Sp_{X\times Y^*})} (\norm{v}+\norm{x^*}),
\\\notag
\widehat{\rm srg}^\circ F(\bx,\by):= &\inf_{\substack{(x^*,v)\in\widehat{D}F(\bx,\by)(u,y^*),\; \|u\|=\|y^*\|_*=1,
\\
B\in L(X,Y),\;B^*y^*=u^\ast,\; Bu=v}} \norm{B},
\end{align}
employing the primal-dual derivative $\widehat{D}F(\bx,\by)$ defined by \cref{dlcd},
can be used for estimating radii of subregularity in finite dimensions,
and one can consider additionally $C^1$ and {semismooth}
perturbations.
\if{
(with the standing assumption that $f(\bx)=0$), and the corresponding radii at $(\bx,\by)\in\gph F$:
\begin{align*}
\Rad{SR}{ss}:=& \inf_{f\in\mathcal{F}_{ss}}\{\lip f(\bx) \mid
F+f \mbox{ is not subregular at } (\bx,\by)\},
\\
\Rad{SR}{C^1}:=& \inf_{f\in\mathcal{F}_{C^1}}\{\lip f(\bx) \mid
F+f \mbox{ is not subregular at } (\bx,\by)\}.
\end{align*}
}\fi
\if{
\AK{22/04/22.
Should we go into details about the differences between the the original definition of semismoothness in (Mifflin, 1977), the one in \cite{DonRoc14}, and the `generalization' in \cite{DonGfrKruOut20}?}

\HG{11/05/22. No, the definitions of semismoothness in \cite{DonRoc14} and our paper are equivalent}
}\fi

The next
statement is \cite[Theorem~3.2]{DonGfrKruOut20}.
It gives lower and upper estimates for the radius of subregularity with respect to Lipschitz continuous perturbations, and the exact formula for the radii with respect to semismooth and $C^1$ perturbations.

\begin{theorem}
\label{T1.3}
Let $\dim X<\infty$, $\dim Y<\infty$,
$F:X\rightrightarrows Y$
a mapping
{with closed graph},
and $(\bx,\by)\in\gph F$.
Then
\begin{gather}
\label{T1.3-1}
\widehat{\rm srg}F(\bx,\by)\le\Rad{SR}{lip}\leq
\widehat{\rm srg}^+F(\bx,\by),
\\
\notag
\Rad{SR}{ss}=\Rad{SR}{C^1}=
\widehat{\rm srg}^\circ F(\bx,\by).
\end{gather}
\end{theorem}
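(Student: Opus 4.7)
The plan is to split the equality chain into four estimates: (a) $\widehat{\rm srg}F(\bx,\by) \le \Rad{SR}{lip}$, (b) $\Rad{SR}{lip} \le \widehat{\rm srg}^+F(\bx,\by)$, (c) $\widehat{\rm srg}^\circ F(\bx,\by) \le \Rad{SR}{ss}$, and (d) $\Rad{SR}{C^1} \le \widehat{\rm srg}^\circ F(\bx,\by)$. Combined with $\mathcal{F}_{C^1}\subset\mathcal{F}_{ss}$ (whence $\Rad{SR}{ss}\le\Rad{SR}{C^1}$), these yield both the two-sided Lipschitz bound and the equality for the semismooth and $C^1$ classes. Each pair follows the same duality pattern: lower bounds come from primal-dual sufficient conditions for subregularity, while upper bounds come from explicit constructions of destabilizing perturbations.

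For the lower bounds (a) and (c), I fix any perturbation $f$ from the relevant class with $\lip f(\bx)$ strictly less than the target constant and show that $F+f$ remains subregular at $(\bx,\by)$. The key tool is Theorem 3.1(ii), which certifies subregularity whenever an appropriate infimum involving the primal-dual derivative $\widehat{D}(F+f)(\bx,\by)$ is strictly positive. Using the sum rule from Theorem 2.2(ii), one writes elements of $\widehat{D}(F+f)$ as elements of $\widehat{D}F$ corrected by derivative information of $f$, and bounds the correction by $\lip f(\bx)$. In the Lipschitz case (a) the control is in the max-norm on $X^\ast\times Y$, yielding the constant $\widehat{\rm srg}$. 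In the semismooth case (c), semismoothness of $f$ forces its directional behaviour at $\bx$ to factor through a single linear operator $B$ approximating $f$ along each direction; the infimum of $\norm{B}$ over $B$ realizing the prescribed action on $u$ and $y^\ast$ is precisely $\widehat{\rm srg}^\circ$.

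For the upper bounds (b) and (d), I fix $\eps>0$, pick $(u,y^\ast)\in\Sp_{X\times Y^\ast}$ and $(x^\ast,v)\in\widehat{D}F(\bx,\by)(u,y^\ast)$ almost realizing the relevant infimum, and construct an explicit $f$ such that $F+f$ fails to be subregular at $(\bx,\by)$. For (d), take $f:=-B$ with $B\in L(X,Y)$ realizing the primal-dual data (i.e.\ $Bu=v$ together with the adjoint condition appearing in the definition of $\widehat{\rm srg}^\circ$) and $\norm{B}<\widehat{\rm srg}^\circ F(\bx,\by)+\eps$; since $f$ is linear, $f\in\mathcal{F}_{C^1}$ with $\lip f(\bx)=\norm{B}$. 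The definition \cref{dlcd} of $\widehat{D}F$ supplies a sequence $\gph F\ni(x_k,y_k)\to(\bx,\by)$ with $t_k((x_k,y_k)-(\bx,\by))\to(u,v)$ for some $t_k>0$ and \Fr\ normals $(x_k^\ast,-y_k^\ast)\to(x^\ast,-y^\ast)$, and via Theorem 3.1(i) this sequence witnesses that $F-B$ fails to be subregular at $(\bx,\by)$. For (b) the construction is analogous but one must now build a locally Lipschitz $f$ (not necessarily smooth) whose directional derivative at $\bx$ along $u$ equals $v$ and whose coderivative along the realizing sequence encodes $x^\ast$, with overall Lipschitz modulus essentially $\norm{v}+\norm{x^\ast}$ in the sum norm.

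The main obstacle is (b): synthesizing a Lipschitz perturbation from purely directional primal-dual data while keeping the Lipschitz modulus close to $\norm{v}+\norm{x^\ast}$ requires a careful piecewise construction, and the gap between $\widehat{\rm srg}$ and $\widehat{\rm srg}^+$ reflects precisely the slack lost in passing from a max-norm estimate to a sum-norm one. The construction exploits the finite-dimensional setting through compactness of the unit sphere, and its correctness (i.e.\ the actual failure of subregularity of $F+f$) rests on the non-conventional criterion from Theorem 3.1(i) rather than on a direct verification against the definition of subregularity.
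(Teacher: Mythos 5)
Two points bear emphasis before evaluating your argument. First, the paper does not actually prove \cref{T1.3}: it is quoted verbatim as \cite[Theorem~3.2]{DonGfrKruOut20}. What the paper \emph{does} prove is the generalization \cref{T3.2}, and the remark following it explains that in finite dimensions \cref{T3.2-4} (together with \cref{P3.1}(vii)--(viii), which identify ${\rm srg}_1$ with $\widehat{\rm srg}$ and ${\rm srg}_1^+$ with $\widehat{\rm srg}^+$) recaptures \cref{T1.3-1}. The second display of \cref{T1.3}, involving $\widehat{\rm srg}^\circ$, is \emph{not} re-derived anywhere in the paper --- the construction behind $\widehat{\rm srg}^\circ$ is tied to finite dimensions and has no counterpart among the constants ${\rm srg}_i$. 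Your high-level decomposition and your use of the sum rule \cref{T2.2}(ii) together with the sufficient condition \cref{T3.1}(ii) for the lower bounds, and an explicit destabilizing construction for the upper bounds, do match the architecture of the paper's proof of \cref{T3.2}. So for the first display you are on the right track; for the second, you are gesturing at an argument the paper itself does not reproduce and does not describe --- the claim that ``semismoothness of $f$ forces its directional behaviour at $\bx$ to factor through a single linear operator $B$'' is the entire substance of that bound, and you have not supplied it.

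Second, there is a genuine logical slip in your step (d). You write that, having built the linear perturbation $f:=-B$ and produced the sequence from \cref{dlcd}, ``via \cref{T3.1}(i) this sequence witnesses that $F-B$ fails to be subregular at $(\bx,\by)$.'' That is not what \cref{T3.1}(i) says. The hypothesis ${\rm srg}_1^+(F-B)(\bx,\by)=0$ does \emph{not} imply that $F-B$ fails to be subregular --- the paper explicitly warns of this (``condition ${\rm srg}_{1}^+\,F(\bx,\by)=0$ in part~(i) does not in general imply the absence of subregularity; consider the zero function in \cref{E1.2}''). What \cref{T3.1}(i) gives is a \emph{further} $C^1$ perturbation $g$ with $g(\bx)=0$ and $\nabla g(\bx)=0$ such that $(F-B)+g$ is not subregular. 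Your argument is salvageable: the total perturbation $-B+g$ is again $C^1$ and, because $\nabla g(\bx)=0$, satisfies $\lip(-B+g)(\bx)=\|B\|$, so the bound $\Rad{SR}{C^1}\le\|B\|$ still follows. But as written, your claim about $F-B$ is false and should be corrected. The same caution applies to your sketch of the upper bound (b): exhibiting a sequence that drives ${\rm srg}_1^+$ to zero is not by itself a witness of non-subregularity; one must either invoke \cref{T3.1}(i) and absorb the extra $g$, or --- as the paper's \cref{L3.1} does --- build the Lipschitz perturbation $f$ directly so that ${\rm srg}_1^+(F+f)(\bx,\by)=0$ and then pass through \cref{T3.1}(i) once at the very end.
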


Observe that in the case of a constant function
$X\ni x\mapsto F(x):=c\in Y$ we have
$\widehat{\rm srg}F(\bx,c)=
\widehat{\rm srg}^+ F(\bx,c)=
\widehat{\rm srg}^\circ F(\bx,c)=0$ for any $\bx\in X$.
Thus, each of the conditions $\widehat{\rm srg}F(\bx,\by)>0$ or $\widehat{\rm srg}^\circ F(\bx,\by)>0$ eliminates the troublesome constant functions, something that condition $\srg>0$ fails to do.
In fact, for the zero function in \cref{E1.2}, \cref{T1.3} gives
rad[SR]$_{{\rm lip}}F(0,0)$=\,rad[SR]$_{{\rm ss}}F(0,0)$=
rad[SR]$_{{\rm C^1}}F(0,0)=0$.
\sloppy

Further observe that $\widehat{\rm srg}F(\bx,\by)\leq \widehat{\rm srg}^+F(\bx,\by)\leq 2\widehat{\rm srg}F(\bx,\by)$. Thus, \cref{T1.3}
gives reasonably
tight upper and lower bounds for the radius of subregularity under Lipschitz continuous perturbations.
The property is stable
if and only if $\widehat{\rm srg}F(\bx,\by)$ (or $\widehat{\rm srg}^+F(\bx,\by)$) is positive.
\if{
In this paper, we consider the setting of general Banach or Asplund spaces, and the classes of calm \cref{Fclm} and Lipschitz continuous \cref{Flip} perturbations as well as several other classes.

The main radius estimates
for subregularity and equalities for strong subregularity are collected in \cref{T6.1,T3.3}, respectively.
}\fi

\section{Sum rules for \SVM s}
\label{S2}

In this section, we establish certain sum rules for mappings between normed (in most cases Asplund) spaces.

The next statement is a \emph{fuzzy intersection rule} \cite[Lemma~3.1]{Mor06.1}.

\begin{lemma}
\label{L2.1}
Let $X$ be an Asplund space,
$\Omega_1,\Omega_2$ be closed subsets of $X$,
$\bx\in\Omega_1\cap\Omega_2$, and
$x^*\in N_{\Omega_1\cap\Omega_2}(\bx)$.
Then, for any $\eps>0$, there exist
$x_1\in\Omega_1\cap B_\eps(\bx)$,
$x_2\in\Omega_2\cap B_\eps(\bx)$,
$x_1^*\in N_{\Omega_1}(x_1)+\eps\B^*$,
$x_2^*\in N_{\Omega_2}(x_2)+\eps\B^*$, and
$\la\ge0$ such that
\begin{gather*}
\la x^*=x_1^*+x_2^*
\AND
\max\{\la,\|x_1^*\|\}=1.
\end{gather*}
\end{lemma}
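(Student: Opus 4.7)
The plan is to derive this fuzzy intersection rule from the approximate extremal principle, which characterises Asplund spaces.

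First, I would re-encode the \Fr\ normal $x^*$ as an extremality condition in the Asplund product space $X\times\R$. Consider the closed sets
$$\Lambda_1 := \Omega_1 \times \R_+, \qquad \Lambda_2 := \bigl\{(x, t) \in X \times \R \bigm| x \in \Omega_2,\ t \leq \langle x^*, x - \bx\rangle \bigr\}.$$
Since $x^* \in N_{\Omega_1\cap\Omega_2}(\bx)$ is a \Fr\ normal, one has $\langle x^*, x-\bx\rangle = o(\|x-\bx\|)$ on $\Omega_1 \cap \Omega_2$ near $\bx$. This implies that $(\bx, 0)$ is a local extremal point of $\{\Lambda_1, \Lambda_2\}$: a sufficiently small downward translation of $\Lambda_2$ empties the intersection inside any prescribed neighbourhood of $(\bx,0)$.

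Next, I would apply the approximate extremal principle in $X\times\R$: for every $\eps' > 0$, chosen small enough in terms of $\eps$ and $\|x^*\|$, it yields points $(x_1, s_1) \in \Lambda_1$ and $(x_2, s_2) \in \Lambda_2$ within $\eps'$ of $(\bx, 0)$ together with \Fr\ normals $(\xi_i, \tau_i) \in N_{\Lambda_i}(x_i, s_i) + \eps'\B^*$ satisfying $\|(\xi_1,\tau_1)\| + \|(\xi_2,\tau_2)\| = 1$ and $\|(\xi_1,\tau_1) + (\xi_2,\tau_2)\| \leq \eps'$. Computing the normal cones,
$$N_{\Lambda_1}(x_1, 0) \subseteq N_{\Omega_1}(x_1) \times \R_-, \qquad N_{\Lambda_2}\bigl(x_2, \langle x^*, x_2-\bx\rangle\bigr) = \bigl\{(\zeta - \mu x^*, \mu) \bigm| \mu \geq 0,\ \zeta \in N_{\Omega_2}(x_2)\bigr\},$$
I would set $\la := \tau_2 \geq 0$, let $x_1^*$ be the $X^*$-component of $\xi_1$ (modulo $\eps'$), and $x_2^* := \zeta$. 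Reading off the $X^*$-part of the near-sum condition then gives $\la x^* \approx x_1^* + x_2^*$, with the $O(\eps')$ discrepancy absorbed into the $\eps\B^*$-allowances in the statement. The normalisation $\|(\xi_1,\tau_1)\| + \|(\xi_2,\tau_2)\| = 1$ forces $\max\{\la, \|x_1^*\|\}$ to be bounded both below and above by positive universal constants, so rescaling by homogeneity of the normal cones produces $\max\{\la, \|x_1^*\|\} = 1$.

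The main technical obstacle is the careful $\eps'$-bookkeeping through the product-space normal cone computations, ensuring that the final $x_1^*, x_2^*$ lie in $N_{\Omega_i}(x_i) + \eps\B^*$ with the precise normalisation. The role of the factor $\la \geq 0$, which may vanish, is precisely to accommodate the degenerate regime of the extremal principle in which $x^*$ is not recoverable from $N_{\Omega_1}(x_1) + N_{\Omega_2}(x_2)$; in the non-degenerate case $\la > 0$ one can rescale to $\la = 1$ and recover the customary fuzzy sum rule form.
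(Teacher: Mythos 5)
The paper does not prove \cref{L2.1}; it simply cites \cite[Lemma~3.1]{Mor06.1}. So there is no proof of the paper's own to compare against, but your proposal can be checked on its own terms, and it contains a genuine gap at its very first step.

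The gap is the claim that $(\bx,0)$ is a \emph{local extremal point} of the system $\{\Lambda_1,\Lambda_2\}$. Local extremality (the hypothesis of the approximate extremal principle) requires a \emph{fixed} neighbourhood $U$ of $(\bx,0)$ such that for \emph{every} $\eps'>0$ there is a translation $a$ with $\|a\|<\eps'$ making $(\Lambda_1+a)\cap\Lambda_2\cap U=\es$. Your justification --- ``a sufficiently small downward translation of $\Lambda_2$ empties the intersection inside any prescribed neighbourhood'' --- has the quantifiers in the wrong order: the size of the translation that works depends on $U$ and does not tend to $0$ once $U$ is fixed. Concretely, the Fr\'echet normal inclusion $x^*\in N_{\Omega_1\cap\Omega_2}(\bx)$ only gives $\ang{x^*,x-\bx}\le o(\norm{x-\bx})$; it does \emph{not} give $\ang{x^*,x-\bx}\le 0$ on $\Omega_1\cap\Omega_2\cap B_\rho(\bx)$ for some fixed $\rho$. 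For example, with $X=\R^2$, $\Omega_1=\Omega_2=\{(0,0)\}\cup\{(1/n,1/n^2)\mid n\in\N\}$, $\bx=(0,0)$ and $x^*=(0,1)$, one has $x^*\in N_{\Omega_1\cap\Omega_2}(\bx)$, yet for every fixed $\rho>0$ there are points $x\in\Omega_1\cap\Omega_2\cap B_\rho(\bx)$ with $\ang{x^*,x-\bx}>0$ bounded away from $0$ as the translation parameter shrinks, so a vertical translation of arbitrarily small size does not empty $\Lambda_1\cap\Lambda_2\cap(B_\rho(\bx)\times\R)$. Hence $(\bx,0)$ is \emph{not} a local extremal point of $\{\Lambda_1,\Lambda_2\}$ in general, and the approximate extremal principle cannot be invoked. (The lemma itself still holds in this example, trivially, because the normal cones at the isolated nearby points are all of $X^*$; the failure is in your argument, not the statement.)

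Your general strategy --- encoding $x^*$ as an extremality phenomenon in $X\times\R$ and reading the conclusion off the approximate extremal principle, with $\la$ absorbing the degenerate regime --- is indeed the right spirit, and your normal-cone computations for $\Lambda_1,\Lambda_2$ are correct (the one for $\Lambda_2$ even holds with equality, since $\Lambda_2$ is a linear isomorphism applied to $\Omega_2\times\R_-$). What is missing is a device that turns the asymptotic $o(\norm{x-\bx})$ estimate into genuine extremality. Two standard repairs exist: (a) replace $\Lambda_2$ by $\{(x,t)\mid x\in\Omega_2,\ t\le\ang{x^*,x-\bx}-\nu\norm{x-\bx}\}$ for small $\nu>0$, which \emph{does} produce a local extremal point on a $\nu$-dependent ball, at the price of handling the Fr\'echet subdifferential of the norm (a real obstacle in general Asplund spaces, which need not admit a Fr\'echet-smooth renorm); or (b) observe that what you actually have is \emph{approximate stationarity} of $\{\Lambda_1,\Lambda_2\}$ at $(\bx,0)$ in the sense of Kruger, and invoke the corresponding extended version of the extremal principle for approximately stationary systems rather than the version for locally extremal points. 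Either way, the step ``$(\bx,0)$ is a local extremal point'' must be replaced; as written, the proof does not go through.
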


The above lemma is instrumental in proving the intersection rule in the next theorem.
It is needed to prove the sum rule in \cref{T2.2}.

\begin{theorem}
\label{T2.1}
Let $X$ be an Asplund space,
$\Omega_1,\Omega_2$ be closed subsets of $X$,
$\bx\in\Omega_1\cap\Omega_2$, and
$x^*\in N_{\Omega_1\cap\Omega_2}(\bx)$.
The following assertions hold true.
\begin{enumerate}
\item
For any $\eps>0$, there exist
$x_1\in\Omega_1\cap B_\eps(\bx)$,
$x_2\in\Omega_2\cap B_\eps(\bx)$,
$x_1^*\in N_{\Omega_1}(x_1)$,
$x_2^*\in N_{\Omega_2}(x_2)$, and
$\la\ge0$ such that
\begin{gather}
\label{T2.1-1}
\|\la x^*-x_1^*-x_2^*\|<\eps
\AND
\max\{\la,\|x_1^*\|,\|x_2^*\|\}=1.
\end{gather}
\item
Suppose that
$\Omega_1,\Omega_2$ satisfy at $\bx$ the following \emph{normal qualification condition}:
there exist $\tau>0$ and $\de>0$ such that, for all
$x_1\in\Omega_1\cap B_\de(\bx)$,
$x_2\in\Omega_2\cap B_\de(\bx)$,
$x_1^*\in N_{\Omega_1}(x_1)$ and
$x_2^*\in N_{\Omega_2}(x_2)$,
it holds
\begin{gather}
\label{T2.1-2}
\|x_1^*+x_2^*\|\ge
\tau\max\{\|x_1^*\|,\|x_2^*\|\}.
\end{gather}
Then, for any $\eps>0$, there exist
$x_1\in\Omega_1\cap B_\eps(\bx)$ and
$x_2\in\Omega_2\cap B_\eps(\bx)$ such that
\begin{gather}
\label{T2.1-3}
x^*\in N_{\Omega_1}(x_1)+N_{\Omega_2}(x_2)+\eps\B^*.
\end{gather}
\end{enumerate}
\end{theorem}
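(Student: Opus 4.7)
The plan for part (i) is to apply \cref{L2.1} with an auxiliary parameter $\eps'\in(0,1/2]$ (to be fixed at the end) and then clean up the resulting representation. \cref{L2.1} provides $x_1\in\Omega_1\cap B_{\eps'}(\bx)$, $x_2\in\Omega_2\cap B_{\eps'}(\bx)$, $\la\ge 0$, and approximate normals $z_1^*\in N_{\Omega_1}(x_1)+\eps'\B^*$, $z_2^*\in N_{\Omega_2}(x_2)+\eps'\B^*$ satisfying $\la x^*=z_1^*+z_2^*$ and $\max\{\la,\|z_1^*\|\}=1$. Decomposing $z_i^*=x_i^*+e_i$ with $x_i^*\in N_{\Omega_i}(x_i)$ and $\|e_i\|<\eps'$ yields the bound $\|\la x^*-x_1^*-x_2^*\|<2\eps'$, but the required normalization condition is not yet in place.

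To fix the normalization, I would set $M:=\max\{\la,\|x_1^*\|,\|x_2^*\|\}$. From $\max\{\la,\|z_1^*\|\}=1$ and $\|z_1^*\|\le\|x_1^*\|+\eps'$ one deduces $M\ge 1-\eps'\ge 1/2$. Since Fr\'echet normal cones are cones, dividing by $M$ preserves membership in $N_{\Omega_i}(x_i)$. The rescaled triple $(\la/M,\,x_1^*/M,\,x_2^*/M)$ then satisfies \cref{T2.1-1} with error at most $2\eps'/M\le 4\eps'$, so part (i) follows by choosing $\eps'<\eps/4$ (and also $\eps'\le\de$, in preparation for part (ii)).

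For part (ii), if $x^*=0$ the conclusion is trivial with $x_1=x_2=\bx$ and zero normals, so assume $x^*\ne 0$. Apply part (i) with some $\eps''\in(0,\de]$ to be chosen, producing $\la$, $x_1^*$, $x_2^*$, $x_1$, $x_2$ as above. The triangle inequality together with the qualification condition \cref{T2.1-2} then forces
\begin{gather*}
\tau\max\{\|x_1^*\|,\|x_2^*\|\}\le\|x_1^*+x_2^*\|\le \la\|x^*\|+\eps''.
\end{gather*}
When $\la<1$, part (i) forces $\max\{\|x_1^*\|,\|x_2^*\|\}=1$, hence $\la\ge(\tau-\eps'')/\|x^*\|$; when $\la=1$ there is nothing to show. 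Thus $\la\ge c:=\min\{1,\,\tau/(2\|x^*\|)\}>0$ as soon as $\eps''<\tau/2$. Dividing the bound $\|\la x^*-x_1^*-x_2^*\|<\eps''$ by $\la$, the elements $x_i^*/\la\in N_{\Omega_i}(x_i)$ satisfy $\|x^*-x_1^*/\la-x_2^*/\la\|<\eps''/c$, and selecting $\eps''<c\eps$ delivers \cref{T2.1-3}.

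The only delicate point is the uniform lower bound $M\ge 1/2$ in part (i): without such a bound the rescaling would inflate the error and the argument would collapse. Everything else is careful bookkeeping of constants, combined with the triangle inequality and the fact that Fr\'echet normal cones are closed under positive scaling.
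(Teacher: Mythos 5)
Your proof is correct and follows essentially the same route as the paper's: apply \cref{L2.1}, decompose the approximate normals as exact \Fr\ normals plus errors of size $\eps'$, rescale so that the max-norm equals one, and in (ii) use the normal qualification condition to bound $\la$ away from zero before dividing through by $\la$. The one point of divergence is the normalization in (i) — where the paper distinguishes the cases $\al\ge1$ and $\al<1$ (renormalizing only $\hat u_2^*$ in the second case), you observe directly that $M=\max\{\la,\|x_1^*\|,\|x_2^*\|\}\ge 1-\eps'\ge 1/2$, so dividing by $M$ at most doubles the error — a cleaner argument that avoids the case split at the negligible cost of requiring $\eps'<\eps/4$ rather than $\eps'<\eps/3$.
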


\begin{proof}
\begin{enumerate}
\item
Let $\eps>0$.
Choose a positive number $\eps'<\min\{\eps/3,1\}$.
By \Cref{L2.1}, there exist
$x_1\in\Omega_1\cap B_\eps(\bx)$,
$x_2\in\Omega_2\cap B_\eps(\bx)$,
$u_1^*\in N_{\Omega_1}(x_1)+\eps'\B^*$,
$u_2^*\in N_{\Omega_2}(x_2)+\eps'\B^*$, and
$\la'\ge0$ such that
$\la'x^*=u_1^*+u_2^*$
and
$\max\{\la',\|u_1^*\|\}=1$.
Without loss of generality, we suppose that
$\max\{\la',\|u_1^*\|,\|u_2^*\|\}=1$.
Indeed, if $\|u_2^*\|>1$, we can replace $\la'$, $u_1^*$ and $u_2^*$ with $\la'/\|u_2^*\|$, $u_1^*/\|u_2^*\|$ and $u_2^*/\|u_2^*\|$, respectively.
There exist
$\hat u_1^*\in N_{\Omega_1}(x_1)$ and
$\hat u_2^*\in N_{\Omega_2}(x_2)$
such that
$\|\hat u_1^*-u_1^*\|<\eps'$ and
$\|\hat u_2^*-u_2^*\|<\eps'$.
Hence, $\|\la'x^*-\hat u_1^*-\hat u_2^*\|<2\eps'$.
Set $\al:=\max\{\la',\|\hat u_1^*\|,\|\hat u_2^*\|\}$.
If $\al\ge1$, we set $\la:=\la'/\al$, $x_1^*:=\hat u_1^*/\al$ and $x_2^*:=\hat u_2^*/\al$, and obtain
$\max\{\la,\|x_1^*\|,\|x_2^*\|\}=1$
and
$\|\la x^*-x_1^*-x_2^*\|<2\eps'<\eps$.
Let $\al<1$.
Then $\la'<1$ and
$\max\{\|u_1^*\|,\|u_2^*\|\}=1$.
Without loss of generality, $\|u_1^*\|\le\|u_2^*\|=1$.
Then $\|\hat u_2^*\|>1-\eps'>0$.
We set $\la:=\la'$, $x_1^*:=\hat u_1^*$ and $x_2^*:=\hat u_2^*/\|\hat u_2^*\|$.
Thus,
$\max\{\la,\|x_1^*\|,\|x_2^*\|\}=1$,
$\|x_2^*-\hat u_2^*\|=1-\|\hat u_2^*\|<\eps'$,
and consequently,
$\|\la x^*-x_1^*-x_2^*\|<3\eps'<\eps$.
\item
Let $\eps>0$.
If $x^*=0$, the conclusion holds true trivially.
Let $x^*\ne0$.
Set $\ga:=\max\{1,\|x^*\|/\tau\}$, and
choose a positive number $\eps'<\min\{\eps/(2\ga),\tau/2,\de\}$.
By (i), there exist
$x_1\in\Omega_1\cap B_{\eps'}(\bx)$,
$x_2\in\Omega_2\cap B_{\eps'}(\bx)$,
$x_1^*\in N_{\Omega_1}(x_1)$,
$x_2^*\in N_{\Omega_2}(x_2)$, and
$\la\ge0$ such that conditions \cref{T2.1-1} hold with $\eps'$ in place of $\eps$.
Then $\eps'<\eps/(2\ga)\le\eps/2<\eps$, and consequently,
$x_1\in B_{\eps}(\bx)$ and
$x_2\in B_{\eps}(\bx)$.
Moreover, using conditions \cref{T2.1-1,T2.1-2}, we obtain
\sloppy
\begin{align*}
\la\ga&=\max\{\la,\la\|x^*\|/\tau\}
\ge\max\{\la,(\la\|x^*\|+\eps')/\tau\}-\eps'/\tau
\\&
\ge\max\{\la,\|x_1^*+x_2^*\|/\tau\}-\frac12
\ge\max\{\la,\|x_1^*\|,\|x_2^*\|\}-\frac12=\frac12.
\end{align*}
Hence, $\|x^*-x_1^*/\la-x_2^*/\la\|<\eps'/\la\le2\ga\eps'<\eps$,
i.e. condition \cref{T2.1-3} is satisfied.
\end{enumerate}
\end{proof}

\begin{remark}
\begin{enumerate}
\item
It is easy to show that the conclusions of \cref{L2.1} and \cref{T2.1}(i) are equivalent.
\item
The normal qualification condition in \cref{T2.1}(ii) can be rewritten equivalently in the limiting form:
for any sequences
$\{x_{1k}\}\subset\Omega_1$ and
$\{x_{2k}\}\subset\Omega_2$
converging to $\bx$,
and
$\{x_{1k}^*\},\{x_{2k}^*\}\subset\B^*$ with
$x_{ik}^*\in N_{\Omega_i}(x_i)$ for $i=1,2$ and all $k\in\N$, it holds
\begin{gather*}
\|x_{1k}^*+x_{2k}^*\|\to0
\quad\Longrightarrow\quad
x_{1k}^*\to0\AND x_{2k}^*\to0.
\end{gather*}
In the case of closed sets in an Asplund space, it is equivalent to the \emph{limiting qualification condition} in \cite[Definition~3.2(ii)]{Mor06.1}.
\end{enumerate}
\end{remark}

The next theorem is a key tool for establishing radius of subregularity estimates.

\begin{theorem}
\label{T2.2}
Let $X$ and $Y$ be Asplund spaces,
$F_1,F_2:X\rightrightarrows Y$ be \SVM s with closed graphs,
$\bx\in X$,
$\by_1\in F_1(\bx)$,
$\by_2\in F_2(\bx)$,
$y^*\in Y^*$ and
$x^*\in D^*(F_1+F_2)(\bx,\by_1+\by_2)(y^*)$.
The following assertions hold true.
\begin{enumerate}
\item
For any $\eps>0$, there exist
$(x_1,y_1)\in\gph F_1\cap B_\eps(\bx,\by_1)$,
$(x_2,y_2)\in\gph F_2\cap B_\eps(\bx,\by_2)$,
$y_1^*,y_2^*\in Y^*$,
$x_1^*\in D^*F_1(x_1,y_1)(y_1^*)$,
$x_2^*\in D^*F_2(x_2,y_2)(y_2^*)$,
and
$\la\ge0$ such that
\begin{gather*}
\|(\la x^*-x_1^*-x_2^*,\la y^*-y_1^*,\la y^*-y_2^*)\|<\eps,
\\
\max\{\la,\|(x_1^*,y_1^*)\|,\|(x_2^*,y_2^*)\|\}=1.
\end{gather*}
\item
Suppose that $F_2$ satisfies the Aubin property near $(\bx,\by)$.
Then, for any $\eps>0$, there exist
$(x_1,y_1)\in\gph F_1\cap B_\eps(\bx,\by_1)$,
$(x_2,y_2)\in\gph F_2\cap B_\eps(\bx,\by_2)$, and
$y_1^*,y_2^*\in B_\eps(y^*)$
such that
\begin{gather}
\label{T2.2-3}
x^*\in D^*F_1(x_1,y_1)(y_1^*)+D^*F_2(x_2,y_2)(y_2^*)+\eps\B^*.
\end{gather}
\end{enumerate}
\end{theorem}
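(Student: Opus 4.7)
My plan is to reduce both assertions to \cref{T2.1} by lifting to the Asplund product $Z:=X\times Y\times Y$. Introduce the closed sets
\begin{align*}
\Omega_1 &:= \{(x,y_1,y_2)\in Z : (x,y_1)\in\gph F_1\},
\\
\Omega_2 &:= \{(x,y_1,y_2)\in Z : (x,y_2)\in\gph F_2\},
\end{align*}
each being a product with a free $Y$-factor, and note $(\bx,\by_1,\by_2)\in\Omega_1\cap\Omega_2$. The key lifting observation is that the hypothesis $x^*\in D^*(F_1+F_2)(\bx,\by_1+\by_2)(y^*)$, i.e., $(x^*,-y^*)\in N_{\gph(F_1+F_2)}(\bx,\by_1+\by_2)$, transfers to $(x^*,-y^*,-y^*)\in N_{\Omega_1\cap\Omega_2}(\bx,\by_1,\by_2)$. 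Indeed, any $(x,y_1,y_2)\in\Omega_1\cap\Omega_2$ close to $(\bx,\by_1,\by_2)$ produces $(x,y_1+y_2)\in\gph(F_1+F_2)$ close to $(\bx,\by_1+\by_2)$, and the sum norm on primal products satisfies $\|(x-\bx,y_1+y_2-\by_1-\by_2)\|\le\|(x-\bx,y_1-\by_1,y_2-\by_2)\|$, so testing the Fr\'echet normal inequality for $(x^*,-y^*)$ on lifted points gives the lifted normal.

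For (i), apply \cref{T2.1}(i) in $Z$ to $(x^*,-y^*,-y^*)$. This produces, for any $\eps>0$, points $(x_1,y_1,y_2')\in\Omega_1$ and $(x_2,y_1'',y_2)\in\Omega_2$ within $\eps$ of $(\bx,\by_1,\by_2)$, normals $(a_i,b_i,c_i)\in N_{\Omega_i}$, and $\la\ge0$ satisfying the analogue of \cref{T2.1-1}. The product structure forces $c_1=0$ with $(a_1,b_1)\in N_{\gph F_1}(x_1,y_1)$, and $b_2=0$ with $(a_2,c_2)\in N_{\gph F_2}(x_2,y_2)$. Setting $x_i^*:=a_i$, $y_1^*:=-b_1$, $y_2^*:=-c_2$, the definition \cref{cd} of the coderivative gives $x_i^*\in D^*F_i(x_i,y_i)(y_i^*)$, and the max-norm convention \cref{dnorm} on duals yields $\|(x_i^*,y_i^*)\|=\|(a_i,b_i,c_i)\|_{Z^*}$. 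Projecting the primal ball in $Z$ to $X\times Y$ gives $(x_i,y_i)$ in the correct subgraph balls, and the error and normalization conditions from \cref{T2.1-1} translate directly (using $\|-v\|=\|v\|$) to the two displayed conditions of (i).

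For (ii), my plan is to verify the normal qualification condition \cref{T2.1-2} for $\Omega_1,\Omega_2$ and then invoke \cref{T2.1}(ii). Fix any $L>\lip F_2(\bx,\by_2)$. The standard coderivative bound for Lipschitz-like mappings in Asplund spaces, a short consequence of the Aubin estimate \cref{D1.1-1}, yields $\|a_2\|\le L\|c_2\|$ whenever $(a_2,c_2)\in N_{\gph F_2}(x_2,y_2)$ with $(x_2,y_2)$ in some fixed neighbourhood of $(\bx,\by_2)$. Combined with $\|a_1\|\le\|a_1+a_2\|+\|a_2\|$ and the max-norm on $Z^*$, this gives
\[
(1+L)\max\bigl\{\|a_1+a_2\|,\|b_1\|,\|c_2\|\bigr\}\ge\max\bigl\{\|a_1\|,\|b_1\|,\|a_2\|,\|c_2\|\bigr\},
\]
i.e., \cref{T2.1-2} for $\Omega_1,\Omega_2$ with $\tau=1/(1+L)$. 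Then \cref{T2.1}(ii) decomposes the lifted normal as $(x^*,-y^*,-y^*)\in N_{\Omega_1}+N_{\Omega_2}+\eps\B^*_{Z^*}$, and translating the components as in (i) identifies the residuals slot by slot as $x^*-x_1^*-x_2^*\in\eps\B^*$ and $y_i^*\in B_\eps(y^*)$, establishing \cref{T2.2-3}. I expect the main obstacle to be the bookkeeping of signs and of the mixed sum/max norm conventions across the lifting, and ensuring the uniformity of the coderivative bound coming from the Aubin property; the latter is the only nontrivial external ingredient and is a standard short computation from the definition.
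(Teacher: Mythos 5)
Your proposal is correct and follows essentially the same route as the paper: the same lifting to $Z=X\times Y\times Y$ with the sets $\Omega_1,\Omega_2$, the same transfer of the normal $(x^*,-y^*)$ to $(x^*,-y^*,-y^*)\in N_{\Omega_1\cap\Omega_2}(\bx,\by_1,\by_2)$, assertion (i) as a direct application of \cref{T2.1}(i), and assertion (ii) via the Aubin-property coderivative bound $\|a_2\|\le L\|c_2\|$ to verify the normal qualification condition \cref{T2.1-2} with $\tau=1/(1+L)$ before invoking \cref{T2.1}(ii). The bookkeeping of sum/max norms and signs that you flag as the main concern works out exactly as you sketch and matches the paper's own computation.
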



\begin{proof}
Observe that $X\times Y$ and $X\times Y\times Y$ are Asplund spaces, and define the closed sets
\begin{gather*}
\Omega_i:=\{(x,y_1,y_2)\in X\times Y\times Y\mid (x,y_i)\in\gph F_i\},\quad i=1,2.
\end{gather*}
Observe also that 
\begin{gather*}
\Omega_1\cap\Omega_2=\{(x,y_1,y_2)\in X\times Y\times Y\mid (x,y_1)\in\gph F_1,\; (x,y_2)\in\gph F_2\}
\end{gather*}
and, for all $(x,y_1,y_2)\in X\times Y\times Y$,
\begin{align*}
&N_{\Omega_1}(x,y_1,y_2)=\{(u^*,v^*,0)\in X^*\times Y^*\times Y^*\mid (u^*,v^*)\in N_{\gph F_1}(x,y_1)\},
\\
&N_{\Omega_2}(x,y_1,y_2)=\{(u^*,0,v^*)\in X^*\times Y^*\times Y^*\mid (u^*,v^*)\in N_{\gph F_2}(x,y_2)\},
\\
&N_{\gph(F_1+F_2)}(x,y_1+y_2)
\\
&\hspace{2cm}
\subset\{(u^*,v^*)\in X^*\times Y^*\mid (u^*,v^*,v^*)\in N_{\Omega_1\cap\Omega_2}(x,y_1,y_2)\}.
\end{align*}
Assertion (i) is now a direct consequence of \cref{T2.1}(i).

To prove assertion (ii), we first show that, thanks to the Aubin property of $F_2$, the sets
$\Omega_1,\Omega_2$ satisfy at $(\bx,\by,\by)$ the  normal qualification condition in \cref{T2.1}(ii).
Indeed, if $F_2$ satisfies the Aubin property, then, by \cref{D1.1}(iii) and the definition of the \Fr\ normal cone, there exist numbers $\al>0$ and $\de>0$ such that, for all $(x,y)\in\gph F_2\cap B_\de(\bx,\by)$ and all $(u^*,v^*)\in N_{\gph F_2}(x,y)$, it holds $\|u^*\|\le\al\|v^*\|$.
Hence, if
$(x_1,y_1,v_1)\in\Omega_1\cap B_\de(\bx,\by,\by)$,
$(x_2,v_2,y_2)\in\Omega_2\cap B_\de(\bx,\by,\by)$,
$(x_1^*,y_1^*,0)\in N_{\Omega_1}(x_1,y_1,v_1)$,
$(x_2^*,0,y_2^*)\in N_{\Omega_2}(x_2,v_2,y_2)$
with
$(x_1^*,y_1^*)\in N_{\gph F_1}(x_1,y_1)$ and
$(x_2^*,y_2^*)\in N_{\gph F_2}(x_2,y_2)$, then
\sloppy
\begin{align*}
{\max\{\|(x_1^*,y_1^*,0)\|,\|(x_2^*,0,y_2^*)\|\}}&=
\max\{\|x_1^*\|,\|x_2^*\|,\|y_1^*\|,\|y_2^*\|\}
\\
&\le
\max\{\|x_1^*+x_2^*\|+\|x_2^*\|,\|x_2^*\|,\|y_1^*\|,\|y_2^*\|\}
\\
&=
\max\{\|x_1^*+x_2^*\|+\|x_2^*\|,\|y_1^*\|,\|y_2^*\|\}
\\
&\le
\max\{\|x_1^*+x_2^*\|+\al\|y_2^*\|,\|y_1^*\|,\|y_2^*\|\}
\\
&\le
\max\{(\al+1)\max\{\|x_1^*+x_2^*\|,\|y_2^*\|\},\|y_1^*\|,\|y_2^*\|\}
\\
&\le
(\al+1)\max\{\|x_1^*+x_2^*\|,\|y_1^*\|,\|y_2^*\|\}
\\
&=
(\al+1)\|(x_1^*,y_1^*,0)+(x_2^*,0,y_2^*)\|,
\end{align*}
i.e. $\Omega_1,\Omega_2$ satisfy at $(\bx,\by,\by)$ the  normal qualification condition with $\tau:=(\al+1)\iv$.
By \cref{T2.1}(ii), there exist
$(x_1,y_1)\in\gph F_1\cap B_\eps(\bx,\by)$,
$(x_2,y_2)\in\gph F_2\cap B_\eps(\bx,\by)$,
$y_1^*,y_2^*\in Y^*$,
$x_1^*\in D^*F_1(x_1,y_1)(y_1^*)$, and
$x_2^*\in D^*F_2(x_2,y_2)(y_2^*)$ such that
\begin{gather*}
\|(x^*-x_1^*-x_2^*,y^*-y_1^*,y^*-y_2^*)\|<\eps,
\end{gather*}
i.e. $y_1^*,y_2^*\in B_\eps(y^*)$ and condition \cref{T2.2-3} is satisfied.
\end{proof}
\if{\AK{27/10/21.
Sum norm in primal product spaces and maximum norm in dual ones.}}\fi

The next statement complements \cref{T2.2}(ii).
It extends \cite[Theorem 1.62(i)]{Mor06.1} which addresses the case $\eps=0$.

\begin{theorem}
\label{T2.3}
Let
$F:X\rightrightarrows Y$,
$f:X\to Y$,
$(\bx,\by)\in\gph F$, and $\eps\ge0$.
Suppose that $f$ is \Fr\ differentiable at $\bx$.
Set $\eps_1:=(\|\nabla f(\bx)\|+1)\iv\eps$ and $\eps_2:=(\|\nabla f(\bx)\|+1)\eps$.
Then, for all $y^*\in Y^*$, it holds
\begin{gather}
\label{T2.3-1}
D^*_{\eps_1} F(\bx,\by)(y^*)\subset
D^*_\eps(F+f)(\bx,\by+f(\bx))(y^*)-\nabla f(\bx)^*y^*\subset
D^*_{\eps_2} F(\bx,\by)(y^*).
\end{gather}
\end{theorem}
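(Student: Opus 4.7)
The plan is to prove both inclusions in \cref{T2.3-1} simultaneously from a single algebraic identity plus a two-sided equivalence of the relevant denominators. The setup: let $L:=\|\nabla f(\bx)\|$, $r(x):=f(x)-f(\bx)-\nabla f(\bx)(x-\bx)$, and use $\rho(x):=\|r(x)\|/\|x-\bx\|$ (with $\rho(\bx):=0$), which tends to $0$ as $x\to\bx$ by Fr\'echet differentiability. Also note that $f$ is continuous at $\bx$. The map $\Phi(x,y):=(x,y+f(x))$ is a homeomorphism sending $\gph F$ to $\gph(F+f)$, and it sends $(\bx,\by)$ to $(\bx,\by+f(\bx))$; thus under the substitution $\tilde y=y+f(x)$, $(x,\tilde y)\in\gph(F+f)$ is near $(\bx,\by+f(\bx))$ iff $(x,y)\in\gph F$ is near $(\bx,\by)$, and $(x,\tilde y)\ne(\bx,\by+f(\bx))$ iff $(x,y)\ne(\bx,\by)$.

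The algebraic identity I would derive first is
\begin{equation*}
\langle x^*+\nabla f(\bx)^*y^*,x-\bx\rangle+\langle -y^*,\tilde y-\by-f(\bx)\rangle
=\langle x^*,x-\bx\rangle-\langle y^*,y-\by\rangle-\langle y^*,r(x)\rangle,
\end{equation*}
which says that after the coordinate change, the numerator defining membership in $N_{\gph(F+f),\cdot}$ for the pair $(x^*+\nabla f(\bx)^*y^*,-y^*)$ differs from the numerator defining membership in $N_{\gph F,\cdot}$ for $(x^*,-y^*)$ only by the negligible term $-\langle y^*,r(x)\rangle$, which satisfies $|\langle y^*,r(x)\rangle|\le \|y^*\|\rho(x)\|x-\bx\|$.

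The main technical step is to compare the sum-norm denominators $t:=\|x-\bx\|+\|y-\by\|$ and $s:=\|x-\bx\|+\|\tilde y-\by-f(\bx)\|$. Using $\tilde y-\by-f(\bx)=(y-\by)+(f(x)-f(\bx))$ and $\|f(x)-f(\bx)\|\le(L+\rho(x))\|x-\bx\|$, the upper estimate $s\le(1+L+\rho(x))t$ is immediate. For the lower bound I would split on whether $\|y-\by\|\ge(L+\rho(x))\|x-\bx\|$: in that case reverse triangle inequality gives $s\ge t-(L+\rho(x))\|x-\bx\|\ge t/(1+L+\rho(x))$ after eliminating $\|x-\bx\|$ via $\|x-\bx\|\le t/(1+L+\rho(x))$; in the opposite case $s\ge\|x-\bx\|\ge t/(1+L+\rho(x))$ directly. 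Hence on a neighbourhood of $(\bx,\by+f(\bx))$ one has the two-sided equivalence $t/(1+L+\rho(x))\le s\le(1+L+\rho(x))t$, with $\rho(x)\to 0$.

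With the identity and the equivalence in hand, both inclusions are one-liners. For the first inclusion, given $x^*\in D^*_{\eps_1}F(\bx,\by)(y^*)$, divide the identity by $s$; the $\langle y^*,r(x)\rangle/s$ term is bounded by $\|y^*\|\rho(x)$ since $\|x-\bx\|\le s$, and the main term satisfies (in the regime where it is nonnegative, the only relevant one for a $\limsup$) the bound $\le(1+L+\rho(x))\cdot N_0/t$, producing $\limsup\le(1+L)\eps_1=\eps$. For the second inclusion, dividing instead by $t$ and using $s/t\le 1+L+\rho(x)$ in the nonnegative regime yields $\limsup N_0/t\le(1+L)\eps=\eps_2$. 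The main obstacle is establishing the sharp two-sided comparison $s\asymp t$ with the explicit constant $1+L$; once this is in place, everything else reduces to a clean $\limsup$ argument in which negative numerators are trivially bounded by the nonnegative quantity $\eps$ (resp.\ $\eps_2$).
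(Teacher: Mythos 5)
Your proof is correct and takes essentially the same route as the paper: both rely on the coordinate change $\tilde y = y + f(x)$ and a comparison of the two sum-norm denominators with constant $\|\nabla f(\bx)\|+1$ (up to an $o(1)$ correction coming from the Fr\'echet remainder $\langle y^*, f(x)-f(\bx)-\nabla f(\bx)(x-\bx)\rangle$, which is $o(\|x-\bx\|)$ and hence negligible in the $\limsup$). The only difference is organizational: the paper proves the first inclusion directly and obtains the second by applying it to $(F+f,-f,\eps_2)$, whereas you establish a two-sided denominator estimate and handle both inclusions symmetrically, explicitly isolating the remainder term along the way.
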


\begin{proof}
Let $y^*\in Y^*$ and $x^*\in D^*_{\eps_1}F(\bx,\by)(y^*)$.
By the definitions of the \Fr\ derivative, \Fr\ $\eps$-coderivative \cref{cd}, and the set of $\eps$-normals \cref{nc},
\begin{gather*}
\lim_{\substack{\bx\ne x\to\bx}}
\frac{f(x)-f(\bx)-\nabla f(\bx)(x-\bx)}{\norm{x-\bx}}=0,
\quad
\limsup_{\substack{\gph F\ni (x,y)\to(\bx,\by)\\ (x,y)\ne(\bx,\by)}}
\frac{\ang{x^*,x-\bx}-\ang{y^*,y-\by}}{\norm{(x,y)-(\bx,\by)}}
\leq \eps_1.
\end{gather*}
Hence,
\begin{align*}
\limsup_{\substack{\gph(F+f)\ni (x,y)\to(\bx,\by+f(\bx))\\ (x,y)\ne(\bx,\by+f(\bx))}}
&\frac{\ang{x^*+\nabla f(\bx)^*y^*,x-\bx}-\ang{y^*,y-\by-f(\bx)}} {\norm{(x,y)-(\bx,\by+f(\bx))}}
\\=&
\limsup_{\substack{\gph F\ni (x,y)\to(\bx,\by)\\ (x,y)\ne(\bx,\by)}}
\frac{\ang{x^*,x-\bx}-\ang{y^*,y-\by}} {\norm{x-\bx}+\norm{y-\by+f(x)-f(\bx)}}
\\\le
(\|\nabla f(\bx)\|+1)
&\limsup_{\substack{\gph F\ni (x,y)\to(\bx,\by)\\ (x,y)\ne(\bx,\by)}}
\frac{\ang{x^*,x-\bx}-\ang{y^*,y-\by}}
{(\|\nabla f(\bx)\|+1)\norm{x-\bx}+\norm{y-\by+f(x)-f(\bx)}}
\\\le
(\|\nabla f(\bx)\|+1)
&\limsup_{\substack{\gph F\ni (x,y)\to(\bx,\by)\\ (x,y)\ne(\bx,\by)}}
\frac{\ang{x^*,x-\bx}-\ang{y^*,y-\by}}
{\norm{x-\bx}+\norm{y-\by}}\le(\|\nabla f(\bx)\|+1)\eps_1=\eps.
\end{align*}
This proves the first inclusion in \cref{T2.3-1}.
The second inclusion is a consequence of the first one applied with $F+f$, $-f$ and $\eps_2$ in place of $F$, $f$ and $\eps$, respectively.
\end{proof}

\begin{lemma}\label{LemEpsCoder}
Let
$F:X\toto Y$, $f:X\to Y$, $(\bx,\by)\in\gph F$, and $\eps\ge0$.
Suppose that $f$ is calm at $\bx$ with $c:=\clm f(\bx)<1$.
Then, for all $y^*\in Y^*$ and $x^*\in{D^*_\eps}F(\bx,\by)(y^*)$, it holds $x^*\in D^*_\de(F+f)(\bx,\by+f(\bx))(y^*)$ with $\de:=(\eps+c\norm{y^*})/(1-c)$.
\if{
If $X$ and $Y$ are Asplund spaces, then for all $y^*\in Y^*$, $x^*\in D^*F(\bx,\by)(y^*)$, and $\eps>0$, there exist
$(\hat x,\hat y)\in\gph(F+f)\cap B_\eps(\bx,\by+f(\bx))$,
$\hat y^*\in Y^*$ and
$\hat x^*\in D^*F(\hat x,\hat y)(\hat y^*)$ such that
$\|(\hat x^*,\hat y^*)-(x^*,y^*)\|<\de+\eps$.
}\fi
\end{lemma}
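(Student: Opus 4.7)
}

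The plan is to transport the defining $\limsup$ estimate for $(x^*,-y^*)\in N_{\gph F,\eps}(\bx,\by)$ across the change of variables $\tilde y=y+f(x)$ that links $\gph F$ and $\gph(F+f)$ near the two base points. By the definitions \cref{nc,cd}, the hypothesis $x^*\in D^*_\eps F(\bx,\by)(y^*)$ means
$$\limsup_{\substack{\gph F\ni (x,y)\to(\bx,\by)\\(x,y)\ne(\bx,\by)}}\frac{\langle x^*,x-\bx\rangle-\langle y^*,y-\by\rangle}{\|x-\bx\|+\|y-\by\|}\le\eps,$$
and the goal is the analogous estimate with bound $\de$ along $\gph(F+f)\ni(x,\tilde y)\to(\bx,\by+f(\bx))$. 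Given such $(x,\tilde y)\ne(\bx,\by+f(\bx))$, set $y:=\tilde y-f(x)\in F(x)$, so that $\tilde y-\by-f(\bx)=(y-\by)+(f(x)-f(\bx))$; calmness of $f$ at $\bx$ forces $f(x)\to f(\bx)$, so $(x,y)\to(\bx,\by)$ along $\gph F$ and, because one of $x-\bx$, $y-\by$ is nonzero, $(x,y)\ne(\bx,\by)$.

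First I would rewrite the numerator for the perturbed quotient as
$$\langle x^*,x-\bx\rangle-\langle y^*,\tilde y-\by-f(\bx)\rangle=\bigl(\langle x^*,x-\bx\rangle-\langle y^*,y-\by\rangle\bigr)-\langle y^*,f(x)-f(\bx)\rangle,$$
then fix arbitrary $c'\in(c,1)$ and $\eta>0$. By $\clm f(\bx)=c<c'$ there is a neighbourhood of $\bx$ on which $\|f(x)-f(\bx)\|\le c'\|x-\bx\|$, giving the two side bounds
$$\|x-\bx\|+\|\tilde y-\by-f(\bx)\|\ge\|x-\bx\|+\|y-\by\|-c'\|x-\bx\|,\qquad \bigl|\langle y^*,f(x)-f(\bx)\rangle\bigr|\le c'\|y^*\|\,\|x-\bx\|.$$
Shrinking the neighbourhood further, the hypothesis on $x^*$ yields $\langle x^*,x-\bx\rangle-\langle y^*,y-\by\rangle\le(\eps+\eta)(\|x-\bx\|+\|y-\by\|)$.

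Next I would introduce the parameter $t:=\|x-\bx\|/(\|x-\bx\|+\|y-\by\|)\in[0,1]$ and divide by $\|x-\bx\|+\|y-\by\|$: the perturbed quotient is bounded by
$$\frac{(\eps+\eta)+c'\|y^*\|\,t}{1-c'\,t},$$
a function of $t$ that is monotone increasing on $[0,1]$ (numerator grows, denominator shrinks with $t$), and is therefore maximised at $t=1$ with value $\bigl((\eps+\eta)+c'\|y^*\|\bigr)/(1-c')$. Letting $\eta\downarrow 0$ and $c'\downarrow c$ this maximum tends to $(\eps+c\|y^*\|)/(1-c)=\de$, so
$$\limsup_{\substack{\gph(F+f)\ni(x,\tilde y)\to(\bx,\by+f(\bx))\\(x,\tilde y)\ne(\bx,\by+f(\bx))}}\frac{\langle x^*,x-\bx\rangle-\langle y^*,\tilde y-\by-f(\bx)\rangle}{\|x-\bx\|+\|\tilde y-\by-f(\bx)\|}\le\de,$$
which is exactly $x^*\in D^*_\de(F+f)(\bx,\by+f(\bx))(y^*)$.

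There is no genuine obstacle here; the only point requiring care is the monotonicity argument in $t$ (so that the worst case $\|y-\by\|=0$ drives the bound and produces the factor $1/(1-c)$ — this is what makes $c<1$ essential and explains the exact form of $\de$). The derivation is a refinement of the smooth case \Cref{T2.3}, with the linear approximation $\nabla f(\bx)$ replaced by the crude calmness envelope and the shift $-\nabla f(\bx)^*y^*$ absorbed into the enlargement $\eps\mapsto\de$.
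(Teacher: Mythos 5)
Your proof is correct and takes essentially the same route as the paper: change variables along the graph via $\tilde y=y+f(x)$, apply the calmness estimate $\|f(x)-f(\bx)\|\le c'\|x-\bx\|$ to control both the extra numerator term and the denominator, take $\limsup$, and let $c'\downarrow c$. The only cosmetic difference is that you organize the pointwise estimate via the parameter $t=\|x-\bx\|/(\|x-\bx\|+\|y-\by\|)$ and maximize over $t\in[0,1]$, whereas the paper splits the $\limsup$ into two terms using the lower bound $\|(x-\bx,(y+f(x))-(\by+f(\bx)))\|\ge(1-c')\|(x-\bx,y-\by)\|$; both arrive at the same bound $\de=(\eps+c\|y^*\|)/(1-c)$.
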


\begin{proof}
Choose a number $c'\in (c,1)$.
For all $x$ near $\bx$ and all $y\in Y$, we have
\begin{align*}
\norm{(x-\bx,(y+f(x))-(\by+f(\bx)))}&=
\norm{x-\bx}+\norm{(y+f(x))-(\by+f(\bx))}
\\&\ge
\norm{x-\bx}+\norm{y-\by}-\norm{f(x)-f(\bx)}
\\&\ge
(1-c')\norm{x-\bx}+\norm{y-\by}
\\&\ge
(1-c')\norm{(x-\bx,y-\by)}.
\end{align*}
Hence,
\begin{align*}
\limsup_{\gph(F+f)\ni(x,y)\to(\bx,\by+f(\bx))} &\frac{\ang{x^*,x-\bx}-\ang{y^*,y-(\by+f(\bx))}}{\norm{(x-\bx,y-(\by+f(\bx)))}}
\\=
\limsup_{\gph F\ni(x,y)\to(\bx,\by)}
&\frac{\ang{x^*,x-\bx}-\ang{y^*,(y+f(x))-(\by+f(\bx))}} {\norm{(x-\bx,(y+f(x))-(\by+f(\bx)))}}
\\\le
\limsup_{\gph F\ni(x,y)\to(\bx,\by)}
&\frac{\ang{x^*,x-\bx}-\ang{y^*,y-\by}}{(1-c')\norm{(x-\bx,y-\by)}}
\\&+
\limsup_{(x,y)\to(\bx,\by)}
\frac{\norm{y^*}\norm{f(x)-f(\bx)}}{(1-c')\norm{x-\bx}}
\le
\frac{\eps+c'\norm{y^*}}{1-c'}.
\end{align*}
Taking infimum over all $c'\in (c,1)$, we arrive at the assertion.
\if{
the first assertion.
The second assertion is a consequence of the first one thanks to the standard representation of $\eps$-normals in Asplund spaces; cf. \cite[formula~(2.51)]{Mor06.1}.
}\fi
\end{proof}

\section{Semismooth* sets and mappings}
\label{SS}

\if{
\AK{18/05/22.
I think this deserves to be a separate section.
It is currently a little short.
Can anything be added?
}}\fi

We are going to employ an
infinite-dimensional extension of the \emph{semismoothness*} property introduced recently in \cite{GfrOut21}.
In finite dimensions, this property is weaker than the conventional semismoothness, and is important when constructing Newton-type methods for generalized equations.

\begin{definition}
\label{D1.3}
A set $\Omega\subset X$ is \ssstar at $\bx\in\Omega$ if,
for any $\eps>0$, there is a $\delta>0$ such that
\begin{equation}
\label{D1.3-1}
\vert \ang{x^*,x-\bx}\vert\leq \eps\norm{x-\bx}
\end{equation}
for all $x\in\Omega\cap B_\delta(\bx)$ and
$x^*\in N_{\Omega,\delta}(x)\cap \Sp_{X^*}$.
\end{definition}

\begin{theorem}
\label{T1.1}
Let $X$ be an Asplund space, and $\Omega\subset X$ be closed around $\bx\in\Omega$.
The set $\Omega$ is \ssstar at $\bx$ if and only if,
for any $\eps>0$, there is a $\delta>0$ such that condition \cref{D1.3-1} holds
for all $x\in\Omega\cap B_\delta(\bx)$ and
$x^*\in N_\Omega(x)\cap \Sp_{X^*}$.
\end{theorem}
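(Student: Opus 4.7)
The forward implication ($\Rightarrow$) is immediate: since $N_\Omega(x) \subset N_{\Omega,\delta}(x)$ for every $\delta \geq 0$, the hypothesis in \cref{D1.3} already yields the same inequality when $x^*$ is restricted to Fr\'echet normals.

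For the converse, my plan is to transfer the Fr\'echet-normal version of semismoothness$*$ to the $\delta$-normal version using the standard Asplund-space approximation of $\eps$-normals by Fr\'echet normals at nearby points (see, e.g., \cite[Theorem~2.34]{Mor06.1}): for any closed $\Omega \subset X$, any $x \in \Omega$, any $x^* \in N_{\Omega,\delta}(x)$, and any $\gamma > 0$, there exist $\tilde x \in \Omega$ with $\|\tilde x - x\| \leq \gamma$ and $\tilde x^* \in N_\Omega(\tilde x)$ with $\|\tilde x^* - x^*\| \leq \delta + \gamma$. Given a target $\eps > 0$, I would first apply the Fr\'echet-normal assumption with $\eps/4$ to obtain a reference radius $\delta_0 > 0$, and then pick $\delta > 0$ considerably smaller than both $\delta_0$ and $\eps$ (say $\delta \leq \min\{\delta_0/2, \eps/16\}$).

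Fix an arbitrary $x \in \Omega \cap B_\delta(\bx)$ with $x \neq \bx$ (the case $x = \bx$ being trivial) and $x^* \in N_{\Omega,\delta}(x) \cap \Sp_{X^*}$. The key move is to invoke the approximation with a \emph{tuning parameter} $\gamma$ chosen proportionally to $\|x - \bx\|$, e.g.\ $\gamma := c\,\eps\,\|x - \bx\|$ for a small absolute constant $c$ (also ensuring $\delta + \gamma < 1/2$). This produces $\tilde x \in \Omega$ and $\tilde x^* \in N_\Omega(\tilde x)$ with $\|\tilde x^* - x^*\| \leq \delta + \gamma$. Renormalizing $\hat x^* := \tilde x^*/\|\tilde x^*\|$ (legitimate since $\|\tilde x^*\|$ is within $\delta + \gamma$ of $1$) gives $\hat x^* \in N_\Omega(\tilde x) \cap \Sp_{X^*}$ with $\|\hat x^* - x^*\| \leq 2(\delta + \gamma)$. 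Because $\tilde x \in \Omega \cap B_{\delta_0}(\bx)$, the Fr\'echet-normal hypothesis applies at $\tilde x$, yielding $|\langle \hat x^*, \tilde x - \bx\rangle| \leq (\eps/4)\|\tilde x - \bx\|$. Decomposing
$$\langle x^*, x - \bx\rangle = \langle x^* - \hat x^*, x - \bx\rangle + \langle \hat x^*, \tilde x - \bx\rangle + \langle \hat x^*, x - \tilde x\rangle,$$
bounding the three terms by $2(\delta + \gamma)\|x - \bx\|$, $(\eps/4)(\|x - \bx\| + \gamma)$, and $\gamma$ respectively, and inserting the proportional choice of $\gamma$, the right-hand side is readily made $\leq \eps\,\|x - \bx\|$.

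The delicate point in this argument is precisely the proportional choice $\gamma \propto \|x - \bx\|$: the last term $|\langle \hat x^*, x - \tilde x\rangle| \leq \gamma$ must be absorbed into an $\eps\,\|x - \bx\|$ bound, which forces $\gamma$ to vanish at least linearly as $x \to \bx$. Fortunately, the approximation lemma permits $\gamma$ to depend on the individual pair $(x, x^*)$, so no uniformity of $\gamma$ is required---only the uniform smallness of $\delta$ enters the final conclusion, which is exactly what \cref{D1.3} demands.
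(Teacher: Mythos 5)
Your proposal is correct and uses essentially the same approach as the paper: both rest on the Asplund-space approximation of $\eps$-normals by Fr\'echet normals at nearby points, with the crucial choice of approximation accuracy proportional to $\|x-\bx\|$ (the paper uses $\frac1k\|x_k-\bx\|$; you use $c\,\eps\,\|x-\bx\|$). The only difference is presentational: the paper runs the argument as a proof by contradiction with sequences $x_k,\delta_k,x_k^*$, while you argue directly; the underlying estimates and the renormalization of the approximating Fr\'echet normal are the same in both.
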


\begin{proof}
Since for all $x\in\Omega$ and $\delta\geq0$ we have $N_\Omega(x)\subset N_{\Omega,\delta}(x)$,
the ``only if'' part is true trivially.
We prove the ``if'' part by contradiction.
Suppose that $\Omega$ is not \ssstar at $\bx$, i.e. there are a number $\eps>0$ and sequences $x_k\in\Omega$ with $x_k\to\bx$, $\delta_k\downarrow0$, and $x_k^*\in N_{\Omega,\delta_k}(x_k)\cap \Sp_{X^*}$ such that
\[\vert\ang{x_k^*,x_k-\bx}\vert>\eps\norm{x_k-\bx}.\]
Then, for every $k\in\N$, there exist points $x_k'\in\Omega$ and $x_k'^*\in N_\Omega(x_k')$ satisfying $\|x_k'-x_k\|\leq \frac 1k\norm{x_k-\bx}$ and $\|x_k'^*-x_k^*\|\leq \delta_k+\frac 1k\norm{x_k-\bx}$, and $\|x_k'^*-x_k^*\|+\frac 1k \|x_k'^*\|<\eps$; cf. \cite[formula~(2.51)]{Mor06.1}.
Hence,
\begin{gather*}
\|x_k'-\bx\|\le\|x_k-\bx\|+\|x_k'-x_k\|\le \Big(1+\frac1k\Big)\|x_k-\bx\|,
\end{gather*}
and consequently,
\begin{align*}
\vert\ang{x_k'^*,x_k'-\bx}\vert&\geq \vert\ang{x_k^*,x_k-\bx}\vert- \|x_k'^*-x_k^*\| \norm{x_k-\bx}-\|x_k'^*\|\|x_k-x_k'\|
\\&>
\Big(\eps-\|x_k'^*-x_k^*\|-\frac 1k \|x_k'^*\|\Big)\norm{x_k-\bx}
\\&\ge
\Big(\eps-\|x_k'^*-x_k^*\|-\frac 1k \|x_k'^*\|\Big)\frac{k}{k+1}\|x_k'-\bx\|.
\end{align*}
Since $\|x_k^*\|=1$ and $\|x_k'^*-x_k^*\|\to0$ as $k\to\infty$, we can set $v_k^*:=x_k'^*/\|x_k'^*\|$ and conclude that
\[\left\vert\ang{v_k^*,x_k'-\bx}\right\vert >\frac\eps2\|x_k'-\bx\|\]
for all sufficiently large $k\in\N$.
This completes the proof.
\end{proof}

\begin{remark}
\label{R1.1}
\begin{enumerate}
\item
Condition \cref{D1.3-1} in \cref{D1.3,T1.1} is obviously equivalent to the following one:
\begin{equation*}
\vert \ang{x^*,x-\bx}\vert\leq \eps\norm{x^*}\norm{x-\bx}.
\end{equation*}
Moreover, with the latter condition, the restriction $x^*\in\Sp_{X^*}$ in \cref{D1.3,T1.1} can be dropped.
 \item
\cref{D1.3} differs from the corresponding finite dimensional definition in \cite[Definition 3.1]{GfrOut21}.
Thanks to \cite[Proposition 3.2]{GfrOut21}, the two definitions are equivalent (in finite dimensions).
\end{enumerate}
\end{remark}

The class of \ssstar sets is rather broad.
E.g., it follows from \cite[Theorem 2]{Jou07} that in finite dimensions every closed subanalytic set is \ssstar at any of its points.

\begin{definition}
\label{D1.4}
A mapping $F:X\toto Y$ is \ssstar at $(\bx,\by)\in\gph F$ if $\gph F$ is \ssstar at $(\bx,\by)$.
\end{definition}

In view of \cref{R1.1}(i), we have the following explicit reformulation of the \ssstar property of a \SVM.

\begin{proposition}
\label{P1.1}
A mapping $F:X\toto Y$ is \ssstar at $(\bx,\by)\in\gph F$ if and only if,
for any $\eps>0$, there is a $\delta>0$ such that
\begin{align}
\label{P1.1-1}
\vert\ang{x^*,x-\bx}-\ang{y^*,y-\by}\vert\le\eps \norm{(x^*,y^*)}\norm{(x-\bx,y-\by)}
\end{align}
for all $(x,y)\in\gph F\cap B_\delta(\bx,\by)$, $y^*\in Y^*$, and $x^*\in D^*_\de F(x,y)(y^*)$.
\end{proposition}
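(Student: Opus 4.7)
The plan is to simply unpack the definitions and apply \cref{R1.1}(i). By \cref{D1.4}, $F$ is \ssstar at $(\bx,\by)$ if and only if $\gph F\subset X\times Y$ is \ssstar at $(\bx,\by)$ in the sense of \cref{D1.3}. I would then use \cref{R1.1}(i) (applied in the product space $X\times Y$) to replace the unit-sphere restriction on the normals by the equivalent ``weighted'' inequality, and finally translate normals to $\gph F$ into coderivatives via formula~\cref{cd}.

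More concretely: by \cref{D1.3} combined with \cref{R1.1}(i), the set $\gph F$ is \ssstar at $(\bx,\by)$ if and only if for every $\eps>0$ there exists $\de>0$ such that
\begin{equation*}
|\langle (x^*,-y^*),(x-\bx,y-\by)\rangle|\le \eps\,\|(x^*,-y^*)\|\,\|(x-\bx,y-\by)\|
\end{equation*}
for all $(x,y)\in \gph F\cap B_\de(\bx,\by)$ and all $(x^*,-y^*)\in N_{\gph F,\de}(x,y)$ (the restriction to $\Sp_{X^*\times Y^*}$ being dropped thanks to \cref{R1.1}(i)). The only remaining task is to rewrite this condition in terms of coderivatives and to verify that the norms match the conventions of the paper.

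The pairing on $X^*\times Y^*$ with $X\times Y$ expands, by definition, as
\begin{equation*}
\langle(x^*,-y^*),(x-\bx,y-\by)\rangle=\langle x^*,x-\bx\rangle-\langle y^*,y-\by\rangle.
\end{equation*}
By the convention \cref{dnorm} for the dual product norm, $\|(x^*,-y^*)\|=\max\{\|x^*\|,\|-y^*\|\}=\|(x^*,y^*)\|$. Finally, by the definition \cref{cd} of the \Fr\ $\de$-coderivative, the condition $(x^*,-y^*)\in N_{\gph F,\de}(x,y)$ is equivalent to $x^*\in D^*_\de F(x,y)(y^*)$, with $y^*\in Y^*$ arbitrary. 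Substituting these three identifications into the preceding displayed inequality yields exactly \cref{P1.1-1}, which proves both implications simultaneously. There is no main obstacle: the statement is purely a transcription lemma, and the only slightly non-trivial ingredient, namely removing the normalization $\|(x^*,-y^*)\|=1$ from \cref{D1.3}, has already been recorded in \cref{R1.1}(i).
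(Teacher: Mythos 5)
Your proposal is correct and follows exactly the route the paper intends: the paper offers no separate argument for \cref{P1.1}, just the remark that it follows from \cref{R1.1}(i), and you have correctly filled in the transcription (applying \cref{D1.3,D1.4,R1.1} to $\Omega=\gph F$ in $X\times Y$, expanding the pairing, using the $\max$-norm convention \cref{dnorm}, and invoking \cref{cd}).
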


\begin{proposition}\label{LemSS}
Let $f:X\to Y$
and $\bx\in X$.
Suppose that the directional derivatives $f'(x;x-\bx)$ and $f'(x;\bx-x)$ exist
for all $x\ne\bx$ near $\bx$, and
\[ \lim_{\bx\ne x\to\bx}\frac {\max\{\|f(x)-f(\bx)-f'(x;x-\bx)\|,\|f(x)-f(\bx)+f'(x;\bx-x)\|\}} {\|(f(x)-f(\bx),x-\bx)\|}=0.\]
Then $f$ is \ssstar at $\bx$.
\if{
\HG{27/01/2017. If we do not require in Definition 2.2 that $\Omega$ is  closed then continuity of $f$ is not needed}
\AK{3/02/22.
Yes. And $U$ seems not needed either.}
}\fi
\end{proposition}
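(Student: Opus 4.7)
The plan is to verify Definition 1.3 applied to the set $\gph f$ at $(\bx,f(\bx))$. Given a target $\eps>0$, pick an auxiliary parameter $\eps_0\in(0,\eps/2)$ and, by the hypothesis, choose $\delta_1>0$ so that
\[
\max\bigl\{\|f(x)-f(\bx)-f'(x;x-\bx)\|,\; \|f(x)-f(\bx)+f'(x;\bx-x)\|\bigr\}\le \eps_0\, M
\]
for all $x\in B_{\delta_1}(\bx)\setminus\{\bx\}$, where $M:=\|x-\bx\|+\|f(x)-f(\bx)\|$. Then set $\delta:=\min\{\delta_1,\;\eps/(2(1+\eps_0))\}$. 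For $(x,f(x))\in\gph f\cap B_\delta(\bx,f(\bx))$ with $x\ne\bx$ (the case $x=\bx$ is trivial) and arbitrary $(x^*,-y^*)\in N_{\gph f,\delta}(x,f(x))\cap \Sp_{X^*\times Y^*}$, the goal is to verify the bound in \cref{D1.3-1}.

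The core step is to test the $\delta$-normal condition along the two rays $u_s:=x+s(x-\bx)$ and $u_s:=x+s(\bx-x)$ as $s\downarrow 0$. By the assumed existence of the directional derivatives at $x$, $f(u_s)-f(x)=s\,f'(x;\pm(x-\bx))+o(s)$, so $(u_s,f(u_s))\to(x,f(x))$ in $\gph f$ and the ratios $(f(u_s)-f(x))/s$ converge, respectively, to $f'(x;x-\bx)$ and $f'(x;\bx-x)$. Dividing the defining inequality of $N_{\gph f,\delta}$ by $s$ and letting $s\downarrow 0$ yields
\begin{align*}
\langle x^*,x-\bx\rangle-\langle y^*,f'(x;x-\bx)\rangle &\le \delta\,\|(x-\bx,\,f'(x;x-\bx))\|,\\
-\langle x^*,x-\bx\rangle-\langle y^*,f'(x;\bx-x)\rangle &\le \delta\,\|(x-\bx,\,f'(x;\bx-x))\|.
\end{align*}

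Next substitute $f'(x;x-\bx)=(f(x)-f(\bx))+r_1$ in the first inequality and $f'(x;\bx-x)=-(f(x)-f(\bx))+r_2$ in the second, with $\|r_i\|\le \eps_0 M$ by the hypothesis. Each substitution alters the left-hand side by at most $\|y^*\|\eps_0 M$, and the right-hand-side norms are bounded by $(1+\eps_0)M$ via the triangle inequality. Combining the two resulting one-sided bounds gives
\[
|\langle x^*,x-\bx\rangle-\langle y^*,f(x)-f(\bx)\rangle|\;\le\;\bigl(\delta(1+\eps_0)+\eps_0\|y^*\|\bigr)\,M.
\]
Since $(x^*,-y^*)$ lies on the unit sphere of the dual norm on $X^*\times Y^*$, which is the max norm by \cref{dnorm}, one has $\|y^*\|\le 1$; hence the bound is at most $(\delta(1+\eps_0)+\eps_0)M\le \eps M$ by the choice of $\delta$ and $\eps_0$, which is exactly the inequality required in \cref{D1.3-1}.

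The main technical obstacle is the legitimacy of dividing by $s$ and passing to the limit in the $\delta$-normal inequality: one needs $(u_s,f(u_s))$ to be a genuine approach sequence along $\gph f$ to $(x,f(x))$. This is precisely what the existence of the two directional derivatives delivers. The remaining effort is bookkeeping of two independent error sources — the slack $\delta$ intrinsic to the $\delta$-normal and the residual $\eps_0$ from the hypothesis — together with a calibration so that their joint contribution falls below~$\eps$.
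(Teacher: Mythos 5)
Your proposal is correct and follows essentially the same route as the paper's proof: test the $\delta$-normal inequality along the two rays $x\pm s(x-\bx)$, pass to the limit as $s\downarrow 0$ to extract the two one-sided bounds involving the directional derivatives, replace each directional derivative by $\pm(f(x)-f(\bx))$ up to the controlled residual, and calibrate $\delta$ against the residual tolerance so the combined error falls below $\eps\|(x-\bx,f(x)-f(\bx))\|$. The only (cosmetic) differences are your sign convention for $y^*$ and your use of two explicit parameters ($\eps_0$ and $\delta$) where the paper lets a single $\delta$ do both jobs.
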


\begin{proof}
Let a number $\eps>0$ be fixed.
Choose a number $\delta>0$ such that
$\delta(1+\eps/2)\leq\eps/2$, and
\begin{multline*}
\max\{\|f(x)-f(\bx)-f'(x;x-\bx)\|,\|(f(x)-f(\bx)+f'(x;\bx-x))\|\}
\\
<\frac\eps2\|(x-\bx,f(x)-f(\bx))\|
\qdtx{for all}x\in B_\delta(\bx)\setminus\{\bx\}.
\end{multline*}
Let $x\in B_\delta(\bx)\setminus\{\bx\}$ and $(x^*,y^*)\in N_{\gph f,\delta}(x,f(x))\cap\Sp_{(X\times Y)^*}$.
Then
\begin{gather*}
\frac{{\ang{x^*,x-\bx}+\ang{y^*,f'(x;x-\bx)}}} {\|(x-\bx,f'(x;x-\bx))\|}
=
\lim_{t\downarrow0} \frac{\ang{x^*,x+t(x-\bx)-x}+\ang{y^*,f(x+t(x-\bx))-f(x)}} {\|(x+t(x-\bx)-x,f(x+t(x-\bx))-f(x))\|}
\leq\delta.
\end{gather*}
Hence,
\begin{align*}
\ang{x^*,x-\bx}&+\ang{y^*,f(x)-f(\bx)}
\\&=
\ang{x^*,x-\bx}+\ang{y^*,f'(x;x-\bx)}+ \ang{y^*,f(x)-f(\bx)-f'(x;x-\bx)}
\\&\leq
\delta\|(x-\bx,f'(x;x-\bx))\|+\|f(x)-f(\bx)-f'(x;x-\bx)\|
\\&\leq
\delta\norm{(x-\bx,f(x)-f(\bx))}+ (\de+1)\|f(x)-f(\bx)-f'(x;x-\bx)\|
\\&\leq
(\delta+(\de+1)\eps/2)\norm{(x-\bx,f(x)-f(\bx))}
\leq \eps\norm{(x-\bx,f(x)-f(\bx))}.
\end{align*}
Similarly,
$\ang{x^*,\bx-x}+\ang{y^*,f(\bx)-f(x)}\le \eps\norm{(x-\bx,f(x)-f(\bx))}$,
and consequently,\\ $\vert\ang{x^*,x-\bx}+\ang{y^*,f(x)-f(\bx)}\vert\leq \eps\norm{(x-\bx,f(x)-f(\bx))}$.
\end{proof}

\begin{corollary}\label{CorSS}
If a function $f:X\to Y$ is positively homogenous at $\bx\in X$, i.e.,
\[f(\bx+\lambda(x-\bx))=f(\bx)+\lambda(f(x)-f(\bx))\qdtx{for all} x\in X,\;\lambda>0,\]
then it is \ssstar at $\bx$.
\end{corollary}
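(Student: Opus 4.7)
The plan is to apply \cref{LemSS} directly. So I need to verify two things: the directional derivatives $f'(x;x-\bx)$ and $f'(x;\bx-x)$ exist for all $x\ne\bx$ near $\bx$ (in fact for all $x\in X\setminus\{\bx\}$), and the limit appearing in \cref{LemSS} vanishes. Both should follow by direct computation from positive homogeneity, with no further obstacle.

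First I compute $f'(x;x-\bx)$. Fix $x\ne\bx$. For $t>0$ we have $x+t(x-\bx)=\bx+(1+t)(x-\bx)$ with $1+t>0$, so by positive homogeneity
\begin{equation*}
f(x+t(x-\bx))=f(\bx)+(1+t)(f(x)-f(\bx)),
\end{equation*}
and since $f(x)=f(\bx+1\cdot(x-\bx))=f(\bx)+(f(x)-f(\bx))$, subtracting gives $f(x+t(x-\bx))-f(x)=t(f(x)-f(\bx))$. Dividing by $t$ and passing to the limit, $f'(x;x-\bx)=f(x)-f(\bx)$. Analogously, for $t\in(0,1)$, $x+t(\bx-x)=\bx+(1-t)(x-\bx)$ with $1-t>0$, so positive homogeneity yields $f(x+t(\bx-x))-f(x)=-t(f(x)-f(\bx))$, hence $f'(x;\bx-x)=-(f(x)-f(\bx))=f(\bx)-f(x)$.

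With these explicit formulas, the two residuals in \cref{LemSS} become
\begin{equation*}
f(x)-f(\bx)-f'(x;x-\bx)=0
\AND
f(x)-f(\bx)+f'(x;\bx-x)=0
\end{equation*}
for every $x\ne\bx$. Consequently the quotient in the limit condition of \cref{LemSS} is identically zero, and the limit holds trivially. \cref{LemSS} then gives that $f$ is \ssstar at $\bx$.

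The whole argument is really just bookkeeping with positive homogeneity; there is no real obstacle, only the slight care needed to ensure that the scaling factors $1+t$ and $1-t$ stay positive (which is why we may restrict $t$ to a small positive neighbourhood of $0$ when computing $f'(x;\bx-x)$, and unrestricted $t>0$ when computing $f'(x;x-\bx)$).
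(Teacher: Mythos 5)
Your proof is correct and follows exactly the route the paper intends: the corollary is stated immediately after \cref{LemSS} without a separate proof, precisely because positive homogeneity makes both residuals vanish identically, which you verify explicitly. The computation of the one-sided directional derivatives and the care taken to keep the scaling factors $1+t$ and $1-t$ positive are both sound.
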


Let us now briefly compare the semismoothness* properties in \cref{D1.3,D1.4} with the corresponding extensions of smoothness and convexity due to Aussel, Daniilidis \& Thibault \cite{AusDanThi05}, and Zheng \& Ng \cite{ZheNg08.2,ZheNg21} known as \emph{subsmoothness} and \emph{$w$-subsmoothness}.
The definitions below employ the Clarke normal cones $N_{\Omega}^C(x)$.
We do not use this type of cones in the current paper and refer the readers to \cite{Cla83} for the respective definition and properties of such objects.

\begin{definition}
\label{D4.3}
Let $\Omega\subset X$ and $\bx\in\Omega$.
\begin{enumerate}
\item
$\Omega$ is
subsmooth at $\bx$ if for any $\epsilon>0$ there is a $\delta>0$ such that
\begin{equation}
\label{EqSubSm}
\ang{x^*,u-x}\leq \epsilon\norm{u-x}
\end{equation}
for all $x,u\in\Omega\cap B_\delta(\bx)$ and
$x^*\in N_{\Omega}^C(x)\cap \B_{X^*}$.
\item
$\Omega$ is
$w$-subsmooth at $\bx$ if for any $\epsilon>0$ there is a $\delta>0$ such that condition
\cref{EqSubSm} holds with
$x=\bx$
for all
$u\in\Omega\cap B_\delta(\bx)$ and
$x^*\in N_{\Omega}^C(\bx)\cap \B_{X^*}$.
\end{enumerate}
\end{definition}

Observe the differences between the definitions  of semismoothness* in \cref{D1.3} and $w$-sub\-smoothness in \cref{D4.3}(ii): we fix $(u=)\bx$
in the first one,
and $x=\bx$
in the latter.
In addition,
condition \cref{D1.3-1} involves taking the absolute value in the \LHS.
Besides, in \cref{D4.3}(ii) (Clarke) normals are computed at $\bx$, while \cref{D1.3} requires computing (\Fr\ {$\de$-})normals at all
$x$ near $\bx$.
\if{
\HG{13/06/2022.''both correspond to fixing $x=\bx$ in \cref{EqSubSm}.'' This is not true, because the inequality is not symmetric in $x$ and $u$ because of $x^*\in N_\Omega^C(x)$.}
\AK{14/06/22.
I think the wording is still OK.
To make the comparison more straightforward, I have swapped $x$ and $u$ in \cref{EqSubSm} (This is actually how they write it in \cite{ZheNg08.2}.) and in \cref{D4.3}(ii).}
\HG{14/06/22 I undid your changes in definition 4.3 (it now coincides with [42], where w-subsmoothness is defined). I read (4.3) as
\[\ang{x^*,u-x}\leq \epsilon\norm{u-x}\ \forall x,u\in\Omega\cap B_\delta(\bx)\ x^*\in N_{\Omega}^C(x)\cap \B_{X^*},\]
i.e., as the inequality together with the quantifiers.
Now fixing $x=\bx$ yields
\[\ang{x^*,u-\red{\bx}}\leq \epsilon\norm{u-\red{\bx}}\ \forall u\in\Omega\cap B_\delta(\bx)\ x^*\in N_{\Omega}^C(\red{\bx})\cap \B_{X^*}\]
(w-subsmooth sets) and fixing $u=\bx$ yields
\[\ang{x^*,\red{\bx}-x}\leq \epsilon\norm{\red{\bx}-x}\ \forall x\in\Omega\cap B_\delta(\bx)\ x^*\in N_{\Omega}^C(x)\cap \B_{X^*}\]
(semismooth* sets). In my opinion, from a geomtrical point of view, this is the essential difference. To avoid ambiguities I have now written \red{in Definition \ref{D4.3}(i)} instead of \red{in \eqref{EqSubSm}}.
 The difference that for semismooth* sets the Fr\'echet normal cone (in the Asplund setting) is used and for w-subsmooth sets the Clarke normal cone, is not so essential, because (4.1) remains valid for all $x^*\in \cl*\co \bar N_\Omega(x)$.
}}\fi

\begin{definition}
A mapping $F:X\toto Y$ is
subsmooth ($w$-subsmooth) at $(\bx,\by)\in\gph F$ if
$\gph F$ is subsmooth ($w$-subsmooth) at $(\bx,\by)$.
\end{definition}

Recall \cite{Mor06.1} that in a Banach space it holds ${N}^C_\Omega(x)\subset\cl^*\co \overline{N}_\Omega(x)$ for all $x\in\Omega$, and the inclusion holds as equality if the space is Asplund, and $\Omega$ is
locally closed around $x$. It follows immediately
that subsmoothness of a set $\Omega$ at $\bx\in\Omega$ implies that
\[\overline{N}_\Omega(x)\cap \B_{X^*}\subset {N}^C_\Omega(x)\cap \B_{X^*}\subset N_{\Omega,\epsilon}(x)\]
for any $\epsilon>0$ and all $x\in\Omega$ sufficiently close to $\bx$.
Furthermore, whenever a set $\Omega$ is subsmooth or  $w$-subsmooth at $\bx$, it is normally regular at $\bx$. On the other hand, a semismooth* set needs not to be normally regular at the point under consideration.
As an easy example consider the complementarity angle $\Omega=\{(x_1,x_2)\in\R^2_+\,\mid\, x_1x_2=0\}$.
It is obviously semismooth* at $(0,0)$ but not normally regular, hence, not ($w$-)subsmooth.

Since ($w$-)subsmoothness and the semismooth* properties for a \SVM\ are defined via the respective properties of its graph, similar considerations also apply to mappings.

\section{Radii of subregularity}
\label{S3}

In this section, $F:X\rightrightarrows Y$ is a mapping between normed spaces, and $(\bx,\by)\in\gph F$.

Several quantities employing ($\eps$-)coderivatives can be used for quantitative characterization of subregularity of mappings and its stability, and potentially other related properties.
Given an $\eps\ge0$, we are going to use the set
\begin{align}
\label{Del}
\Delta_\eps F:=&\{(x,y,x^*)\in X\times Y\times X^*\mid
{(x,y)\in\gph F},\;
x^*\in D^*_\eps F(x,y)(\Sp_{Y^*})\}
\end{align}
of the primal and dual (in $X^*$ corresponding to unit vectors in $Y^*$) components of the $\eps$-co\-derivative of $F$, as well as its projection on $X\times Y$:
\begin{align}
\label{Del0}
\Delta_\eps^{\circ}F:=&\{(x,y)\in\gph F\mid
D^*_\eps F(x,y)(\Sp_{Y^*})\ne\es\}.
\end{align}
When $\eps=0$, we will drop the subscript in \cref{Del,Del0}.
Given parameters $\eps>0$ and $\de\ge0$, we define local
analogues of \cref{Del,Del0}, involving the additional restriction \cref{P1.1-1} which arises when dealing with semismooth* perturbations; cf. \cref{P1.1}:
\begin{align}
\notag
\Upsilon_{\eps,\de}F(\bx,\by):=&\{(x,y,x^*)\mid
(x,y)\in\gph F,\;
x^*\in D^*_\de F(x,y)(y^*),\;y^*\in\Sp_{Y^*},
\\
\label{Ups}
&\quad
\vert\ang{x^*,x-\bx}-\ang{y^*,y-\by}\vert\le\eps \norm{(x^*,y^*)}\norm{(x-\bx,y-\by)}\},
\\
\label{Ups0}
\Upsilon_{\eps,\de}^{\circ}F(\bx,\by):=&\{(x,y)\mid
(x,y,x^*)\in\Upsilon_{\eps,\de}F(\bx,\by)
\text{ for some } x^*\in X^*\}.
\end{align}
When $\de=0$, we will write $\Upsilon_{\eps}F(\bx,\by)$ and $\Upsilon_{\eps}^{\circ}F(\bx,\by)$.

Here is the list of regularity constants to be used in the sequel when estimating the radii of subregularity with respect to different classes of perturbations.
They employ notations \cref{Del,Ups,Del0,Ups0}.
\begin{align}
\label{srg1}
{\rm srg}_{1}F(\bx,\by):=&
\sup_{\eps>0}\;\inf_{\substack{(x,y,x^*)\in\Delta F,\; 0<\|x-\bx\|<\eps}}
\max\left\{\frac{\|y-\by\|}{\|x-\bx\|},\norm{x^*}\right\},
\\
\label{srg2}
{\rm srg}_{2}F(\bx,\by):=&
\sup_{\eps>0}\;\inf_{\substack{(x,y)\in\Delta^{\circ}F,\; 0<\|x-\bx\|<\eps}}
\frac{\|y-\by\|}{\|x-\bx\|},
\\
\label{srg3}
{\rm srg}_{3}F(\bx,\by):=&
\sup_{\eps>0}\;\inf_{\substack{(x,y,x^*)\in \Upsilon_{\eps}(\bx,\by),\; 0<\|x-\bx\|<\eps}}
\max\left\{\frac{\|y-\by\|}{\|x-\bx\|},\norm{x^*}\right\},
\\
\label{srg4}
{\rm srg}_{4}F(\bx,\by):=&
\sup_{\eps>0}\;\inf_{\substack{(x,y)\in \Upsilon_{\eps}^{\circ}(\bx,\by),\;0<\|x-\bx\|<\eps}}
\frac{\|y-\by\|}{\|x-\bx\|},
\\
\label{srg1+}
{\rm srg}_{1}^+F(\bx,\by):=&
\sup_{\eps>0}\;\inf_{\substack{(x,y,x^*)\in\Delta_\eps F,\; 0<\|x-\bx\|<\eps}}
\left(\frac{\|y-\by\|}{\|x-\bx\|}+\norm{x^*}\right),
\\
\label{srg2+}
{\rm srg}_{2}^+F(\bx,\by):=&
\sup_{\eps>0}\;\inf_{\substack{(x,y,x^*)\in\Delta_\de F,\; 0<\|x-\bx\|<\eps\\
0<\de<\eps,\;\de\|x^*\|<\eps}}
\frac{\|y-\by\|}{\|x-\bx\|},
\\
\label{srg4+}
{\rm srg}_{4}^+F(\bx,\by):=&
\sup_{\eps>0}\;\inf_{\substack{(x,y,x^*)\in \Upsilon_{\eps,\de}F(\bx,\by),\;0<\|x-\bx\|<\eps\\
0<\de<\eps,\; \de\|x^*\|<\eps}}
\frac{\|y-\by\|}{\|x-\bx\|}.
\end{align}

\begin{remark}
\label{R3.1}
\begin{enumerate}
\item
The $\sup\inf$ constructions in \cref{srg1,srg3,srg4,srg2,srg1+,srg4+,srg2+} can be rewritten as $\liminf$.
We have chosen this explicit form to avoid confusion with the common usage of $\liminf$ in set-valued analysis.

\item
Thanks to the term $\|y-\by\|/\|x-\bx\|$ present in all the definitions \cref{srg1,srg3,srg4,srg2,srg1+,srg4+,srg2+},
the inequality ${\|x-\bx\|<\eps}$ in these definitions can be replaced with the stronger one: ${\|(x,y)-(\bx,\by)\|<\eps}$.

\item
In view of the definition \cref{dnorm} of the dual norm on the product space, $\|(x^*,y^*)\|$ in \cref{Ups} equals $\max\{\|x^*\|,1\}$.

\item
In
\cref{srg3},
the set $\Upsilon_{\eps}(\bx,\by)$ can be replaced with its slightly simplified version with the inequality in the definition \cref{Ups} substituted by the next one:
\begin{align*}
\ang{x^*,x-\bx}-\ang{y^*,y-\by}\vert\le\eps \norm{(x-\bx,y-\by)}.
\end{align*}
\end{enumerate}
\end{remark}
\if{
\AK{6/01/22.
Each constant characterizes something stronger than subregularity.
Could it make sense to check what exactly each constant actually characterizes?
Can they be meaningful by themselves apart from producing estimates for the radius?
}
\AK{10/01/22.
The system of notations srg$_1$, srg$_2$,\ldots\ does not look nice.
I hesitate to start writing srg$_{lip}$, srg$_{clm}$,\ldots\ because the above regularity constants may potentially be applicable to characterizing other properties, not just the radii.
The list of constants may also grow bigger.
}
}\fi

\begin{proposition}
\label{P3.1}
Let
$F:X\rightrightarrows Y$ and
$(\bx,\by)\in\gph F$.
\begin{enumerate}
\item
${\rm srg}_{2}F(\bx,\by)\le{\rm srg}_{1}F(\bx,\by)\le
{\rm srg}_{3}F(\bx,\by)$;
\item
${\rm srg}_{2}F(\bx,\by)\le
{\rm srg}_{4}F(\bx,\by)\le{\rm srg}_{3}F(\bx,\by)$;
\item
${\rm srg}_{2}^+F(\bx,\by)\le{\rm srg}_{4}^+F(\bx,\by)$.
\cnta
\end{enumerate}
Suppose that $X$ and $Y$ are Asplund spaces.
Then
\begin{align}
\label{Eq_srg1+Asp}{\rm srg}_{1}^+F(\bx,\by)=&
\sup_{\eps>0}\;\inf_{\substack{(x,y,x^*)\in\Delta F,\; 0<\|x-\bx\|<\eps}}
\left(\frac{\|y-\by\|}{\|x-\bx\|}+\norm{x^*}\right),
\\
\label{Eq_srg2+Asp}
{\rm srg}_{2}^+F(\bx,\by)=&
\sup_{\eps>0}\;\inf_{\substack{(x,y)\in\Delta^\circ F,\; 0<\|x-\bx\|<\eps}}
\frac{\|y-\by\|}{\|x-\bx\|},
\\
\label{Eq_srg4+Asp}
{\rm srg}_{4}^+F(\bx,\by)=&
\sup_{\eps>0}\;\inf_{\substack{(x,y)\in \Upsilon^\circ_{\eps}F(\bx,\by),\;0<\|x-\bx\|<\eps}}
\frac{\|y-\by\|}{\|x-\bx\|}.
\end{align}
Moreover,
\begin{enumerate}
\cntb
\item
${\rm srg}_{1}F(\bx,\by)\le{\rm srg}_{1}^+F(\bx,\by){\leq 2{\rm srg}_{1}F(\bx,\by)}$;
\item
${\rm srg}_{2}F(\bx,\by)=
{{\rm srg}_{2}^+F(\bx,\by)}$;
\item
${\rm srg}_{4}F(\bx,\by)\le{\rm srg}_{4}^+F(\bx,\by)$.
\cnta
\end{enumerate}
Suppose that $\dim X<+\infty$ and $\dim Y<+\infty$.
\begin{enumerate}
\cntb
\item
${\rm srg}_{1}F(\bx,\by)=\widehat{\rm srg}F(\bx,\by)$;
\item
${\rm srg}_{1}^+F(\bx,\by)=\widehat{\rm srg}^+F(\bx,\by)$,
\end{enumerate}
where $\widehat{\rm srg}F(\bx,\by)$ and $\widehat{\rm srg}^+F(\bx,\by)$ are defined by \cref{hatsrg,hatsrg+}, respectively.
\end{proposition}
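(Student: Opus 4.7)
The inequalities (i)--(iii) are immediate from set-theoretic comparisons. The projection $(x,y,x^*)\mapsto(x,y)$ takes $\Delta F$ to $\Delta^\circ F$ (and $\Upsilon_{\eps,\de}F$ to $\Upsilon_{\eps,\de}^\circ F$), while the containments $\Upsilon_\eps F\subset\Delta F$ and $\Upsilon_{\eps,\de}F\subset\Delta_\de F$ hold because the former sets impose the additional semismoothness-type inequality in \cref{Ups}. Since the infimum of a nonnegative quantity over a smaller set is larger, and $\max\{a,b\}\ge a$, carefully matching sets with integrands yields the claimed chains.

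The Asplund-space representations \cref{Eq_srg1+Asp,Eq_srg2+Asp,Eq_srg4+Asp} are the key technical step. In each case, one inclusion is immediate from $D^*F\subset D^*_\de F$, which enlarges the admissible set and lowers the infimum. For the converse, the main tool is the standard approximation of Fr\'echet $\eps$-normals in Asplund spaces (see \cite[formula~(2.51)]{Mor06.1} and the proof of \cref{T1.1}): for any $\eta>0$, each $(x^*,-y^*)\in N_{\gph F,\de}(x,y)$ is matched by some $(x'^*,-y'^*)\in N_{\gph F}(x',y')$ with $(x',y')$ within $\eta$ of $(x,y)$ and $\|(x'^*,y'^*)-(x^*,y^*)\|<\de+\eta$. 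Provided $\eta$ is small relative to $\|x-\bx\|$, the ratio $\|y'-\by\|/\|x'-\bx\|$ stays close to $\|y-\by\|/\|x-\bx\|$; and the explicit constraints $\de<\eps$ and $\de\|x^*\|<\eps$ imposed in the definitions of ${\rm srg}_2^+$ and ${\rm srg}_4^+$ force $\de$ to be small against $\|x^*\|$, making $\de+\eta$ negligible. For \cref{Eq_srg4+Asp} one additionally has to verify that the semismoothness-type inequality in \cref{Ups} is preserved by the approximation; the resulting $O(\eta)$ discrepancy is absorbed by enlarging $\eps$ slightly, which is harmless under the outer $\sup_{\eps>0}$ as $\eps\downarrow0$.

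Items (iv)--(vi) follow immediately from the Asplund representations. By \cref{Eq_srg1+Asp}, both ${\rm srg}_1F(\bx,\by)$ and ${\rm srg}_1^+F(\bx,\by)$ are $\sup_\eps\inf$ expressions over the common set $\Delta F\cap\{\|x-\bx\|<\eps\}$, with integrands $\max\{a,b\}$ and $a+b$, respectively; the elementary bounds $\max\{a,b\}\le a+b\le 2\max\{a,b\}$ for nonnegative $a,b$ give (iv). Assertions (v) and (vi) reduce, via \cref{Eq_srg2+Asp,Eq_srg4+Asp}, to identities of sup-inf expressions over common sets with the same integrand.

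Finally, for (vii) and (viii), I would rewrite ${\rm srg}_1F(\bx,\by)$ as the liminf of $\max\{\|y-\by\|/\|x-\bx\|,\|x^*\|\}$ along admissible sequences $(x_n,y_n,x_n^*,y_n^*)$ with $(x_n,y_n)\in\gph F$, $x_n\to\bx$, $x_n\ne\bx$, $y_n^*\in\Sp_{Y^*}$, and $x_n^*\in D^*F(x_n,y_n)(y_n^*)$. Setting $t_n:=\|x_n-\bx\|\iv$ and invoking finite-dimensional compactness, I pass to a subsequence with $t_n(x_n-\bx)\to u$ ($\|u\|=1$), $t_n(y_n-\by)\to v$ (bounded along a minimising sequence), $y_n^*\to y^*$ ($\|y^*\|=1$), and $x_n^*\to x^*$. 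By the definitions of the directional limiting normal cone and \cref{dlcd}, $(x^*,-y^*)\in\overline N_{\gph F}((\bx,\by);(u,v))$, hence $(x^*,v)\in\widehat D F(\bx,\by)(u,y^*)$ with $(u,y^*)\in\Sp_{X\times Y^*}$ under the doubly normalised choice $\|u\|=\|y^*\|=1$; the functional value equals the liminf value, yielding $\widehat{\rm srg}F(\bx,\by)\le{\rm srg}_1F(\bx,\by)$. The reverse inequality is obtained by a direct construction of an admissible sequence from the definition of $\overline N_{\gph F}((\bx,\by);\cdot)$. The proof of (viii) is parallel, with the integrand $\|y-\by\|/\|x-\bx\|+\|x^*\|$ in place of the maximum; the chief technical obstacle in both identifications is matching the norm convention on $\Sp_{X\times Y^*}$ with the doubly normalised pairs produced by the limit, and verifying---through the positive homogeneity of $\overline N_{\gph F}((\bx,\by);\cdot)$ in the direction and its cone property in the dual argument---that no extraneous pairs in $\Sp_{X\times Y^*}$ obtained by rescaling lower the infimum.
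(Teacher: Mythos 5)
Your proposal follows essentially the same route as the paper: assertions (i)--(iii) from set-theoretic comparisons and the $\max\le\text{sum}\le 2\max$ bound, the Asplund-space representations via the standard approximation of Fr\'echet $\eps$-normals by exact normals at nearby points (cf.\ \cite[formula~(2.51)]{Mor06.1}), items (iv)--(vi) read off from those representations over common sets, and (vii)--(viii) by finite-dimensional compactness along minimizing sequences together with the sequential characterization of the directional limiting normal cone for the reverse inequality. The argument is correct and matches the paper's proof in strategy and key tools.
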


\begin{proof}
Assertions (i)--(iii) follow immediately from the definitions of the respective constants.
To justify \eqref{Eq_srg1+Asp}, first note that the Fr\'echet $\eps$-coderivative is defined by Fr\'echet $\eps$-normals (see \cref{cd}).
Second, given a closed subset $\Omega$ of an Asplund space, and any $\bx\in\Omega$, $\epsilon>0$ and $x_\eps^*\in N_{\Omega,\eps}(\bx)$, there are $x\in\Omega\cap B_\eps(\bx)$ and $x^*\in N_\Omega(x)$ satisfying $\norm{x^*-x^*_\eps}<2\eps$; cf. \cite[formula (2.51)]{Mor06.1}.
Hence, $\Delta_\eps F$ in \cref{srg1+} can be replaced with $\Delta F$, yielding \eqref{Eq_srg1+Asp}.
The same arguments show that, in the Asplund space setting, one can set $\delta=0$ in \cref{srg2+,srg4+}, yielding \cref{Eq_srg2+Asp,Eq_srg4+Asp}.
Note that in the case of \cref{Eq_srg2+Asp},  conditions $(x,y,x^*)\in\Delta_0 F$ and $0\cdot\norm{x^*}<\eps$ are equivalent to $(x,y)\in \Delta^\circ F$.
Having established \cref{Eq_srg1+Asp}--\cref{Eq_srg4+Asp}, the estimates (iv)--(vi) easily follow.
Finally, to verify (vii) and (viii), consider  suitable sequences $(x_k,y_k)\to(\bx,\by)$ and $(x_k^*,y_k^*)$ with $x_k^*\in D^*F(x_k,y_k)(y_k^*)$, $y_k^*\in\Sp_{Y^*}$ and
\[{\rm srg}_{1}F(\bx,\by)=\lim_{k\to\infty}\max \left\{\frac{\norm{y_k-\by}}{\norm{x_k-\bx}}, \norm{x_k^*}\right\}\quad\left(\text{or}\quad{\rm srg}_{1}^+F(\bx,\by)=\lim_{k\to\infty}\frac{\norm{y_k-\by}}{\norm{x_k-\bx}}+\norm{x_k^*}\right).\]
Since both $X$ and $Y$ are finite dimensional, by passing to subsequences without relabelling, we can assume that
\begin{equation}\label{EqAuxProp5.2}
\left(\frac{x_k-\bx}{\norm{x_k-\bx}}, \frac{y_k-\bx}{\norm{x_k-\bx}}, x_k^*,y_k^*\right)\to (u,v,x^*,y^*).
\end{equation}
It follows that $(u,y^*)\in \Sp_{X\times Y^*}$ and $(x^*,v)\in\widehat D^*F(\bx,\by)(u,y^*)$, and consequently,
\[\widehat{\rm srg}F(\bx,\by)\leq\max\{\norm{u},\norm{x^*}\}={\rm srg}_{1}F(\bx,\by)\quad\left(\text{or}\quad\widehat{\rm srg}^+F(\bx,\by)\leq\norm{u}+\norm{x^*}={\rm srg}_{1}^+F(\bx,\by)\right).\]
The opposite inequalities are valid in arbitrary normed spaces and rely on the fact that, by the definition of $\widehat D F$, for any $(u,y^*)\in \Sp_{X\times Y^*}$ and $(x^*,v)\in\widehat D^*F(\bx,\by)(u,y^*)$, there are sequences $(x_k,y_k)\to (\bx,\by)$  and $(x_k^*,y_k^*)$ with $x_k^*\in D^*F(x_k,y_k)(y_k^*)$, $y_k^*\in\Sp_{Y^*}$ verifying \eqref{EqAuxProp5.2}.
\end{proof}

\if{
\AK{25/04/22.
What about $\widehat{\rm srg}^\circ F(\bx,\by)$?}
\HG{11/05/22.
I think we cannot say anything about $\widehat{\rm srg}^\circ F(\bx,\by)$ because its construction is tailored to the finite-dimensional case.}
}\fi
\if{
\AK{23/02/22.
The equality in (v) needs to be proved.}
\AK{1/01/22.
$\|(x^*,y^*)\|$ in the definition of
${\rm srg}_{4}^+F(\bx,\by)$ (and in the other definitions?) can be dropped.
}
\AK{6/01/22.
$\|(x^*,y^*)\|$ removed from the definition of
${\rm srg}_{4}^+F(\bx,\by)$.
It can be removed from the definition of
${\rm srg}_{3}F(\bx,\by)$.
I am not sure about
${\rm srg}_{4}F(\bx,\by)$ unless condition $\|x^*\|<M$ is added.
}
\HG{28/01/2022. I reformulated the definitions of the constants: Before it had the form $\liminf_{x\to\bx} S(x)$, where $S(x)$ is a set of real numbers, and it is not clear for me what this means in this context.
\if{
I agree with you that  $\|(x^*,y^*)\|$ could be removed from the definitions of ${\rm srg}_{4}^+F(\bx,\by)$ and ${\rm srg}_{3}F(\bx,\by)$. However, I added it again to the definitions because we cannot remove it from the definition of ${\rm srg}_{4}F(\bx,\by)$ and in my opinion all the formulas should be built by using the same elements in order to increase readability. Another reason is that the definition of the semismoothness* property involves only normalized normals and this is reflected in the formula by $\|(x^*,y^*)\|$.\\
I am also aware of the fact that $\liminf_{(x,y)\TO{\gph F}(\bx,\by)}$ could be replaced by $\liminf_{x\to \bx}$ and $y\in F(x)$ due to the term $\frac{\|y-\by\|}{\|x-\bx\|}$. Again for better readability (at least for me), I used $\liminf_{(x,y)\TO{\gph F}(\bx,\by)}$.
}\fi}

\AK{5/02/22.
I've reformulated the definitions again making them more compact.
I've tried to address your concerns (as well as those of the potential readers) in the subsequent \cref{R3.1}.
\if{
The $\gF$ notation does not save much space.
Besides, it is likely to confuse the readers, unless they are reminded the definition many times.
}\fi
Is the new version acceptable?
}
}\fi
\if{
\AK{27/11/21.
Can ${\rm srg}_0F(\bx,\by)$ and ${\rm srg}_{3}F(\bx,\by)$ be compared?}
\HG{28/01/2022. I do not think so. For \ssstar mappings $F$ there should hold ${\rm srg}_{1}F(\bx,\by)={\rm srg}_{3}F(\bx,\by)$, but in the example we have ${\rm srg}_0F(\bx,\by)<{\rm srg}_{3}F(\bx,\by)$}
}\fi

The next theorem contains characterizations of subregularity from \cite{Gfr11}, which play a key role when establishing radii estimates.
Note that condition ${\rm srg}_{1}^+\,F(\bx,\by)=0$ in part~(i) does not in general imply the absence of subregularity; consider the zero function in \cref{E1.2}.

\begin{theorem}
\label{T3.1}
Let $X$ and $Y$ be Banach spaces,
$F:X\rightrightarrows Y$, and
$(\bx,\by)\in\gph F$.
\begin{enumerate}
\item
If ${\rm srg}_{1}^+\,F(\bx,\by)=0$, then there exists a $C^{1}$ function $f:X\to Y$ with $f(\bx)=0$ and
${\nabla f(\bx)=0}$ such that $F+f$ is not
subregular at $(\bx,\bar{y})$.
\item
Suppose that $X$ and $Y$ are Asplund spaces, and $\gph F$ is closed.
If ${\rm srg}_{1}F(\bx,\by)>0$,
then $F$ is subregular at $(\bx,\by)$.
\end{enumerate}
\end{theorem}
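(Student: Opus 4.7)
For \emph{part (ii)}, my plan is a contradiction argument via the \EVP. Assuming $F$ is not subregular at $(\bx,\by)$, there exist $x_k\in B_{1/k}(\bx)\setminus F\iv(\by)$ with $d(\by,F(x_k))<k\iv d(x_k,F\iv(\by))\le k\iv\|x_k-\bx\|$; choosing $y_k\in F(x_k)$ with $\|y_k-\by\|$ close to $d(\by,F(x_k))$, one obtains $(x_k,y_k)\in\gph F$ and $\mu_k:=\|y_k-\by\|$ satisfying $\mu_k/\|x_k-\bx\|\to 0$. Applying the \EVP\ to $\varphi(x,y):=\|y-\by\|$ on the closed set $\gph F$ at $(x_k,y_k)$, with parameters $\mu_k$ and $\la_k:=\sqrt{\mu_k\|x_k-\bx\|}$ (so that $\la_k=o(\|x_k-\bx\|)$ and $\mu_k/\la_k\to 0$), produces an Ekeland point $(\hat x_k,\hat y_k)\in\gph F$ within $\la_k$ of $(x_k,y_k)$ satisfying $\hat y_k\ne\by$ for large $k$; hence $\hat\sd\varphi(\hat x_k,\hat y_k)\subset\{0\}\times\Sp_{Y^*}$. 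The fuzzy sum rule for \Fr\ subdifferentials in the Asplund setting, applied to $\varphi+(\mu_k/\la_k)\|(\cdot,\cdot)-(\hat x_k,\hat y_k)\|$ together with the indicator of $\gph F$, then yields triples $(x_k',y_k',x_k^*)\in\Delta F$ and $y_k^*\in\Sp_{Y^*}$ with $x_k'\to\bx$, $\|y_k'-\by\|/\|x_k'-\bx\|\to 0$, and $\|x_k^*\|\to 0$, contradicting ${\rm srg}_{1}F(\bx,\by)>0$.

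For \emph{part (i)}, from ${\rm srg}_1^{+}F(\bx,\by)=0$ extract sequences $x_k\to\bx$ (with $x_k\ne\bx$), $y_k\in F(x_k)$, $y_k^*\in\Sp_{Y^*}$, and $x_k^*\in D^*_{\eps_k}F(x_k,y_k)(y_k^*)$ with $\eps_k\downarrow 0$ and $\alpha_k:=\|y_k-\by\|/\|x_k-\bx\|+\|x_k^*\|\to 0$. After thinning the sequence so that $\|x_{k+1}-\bx\|\le\tfrac13\|x_k-\bx\|$, build a $C^1$ function $f:X\to Y$ with $f(\bx)=0$, $\nabla f(\bx)=0$, and $f(x_k)=\by-y_k$ by a Banach-space smoothing construction, enabled by the first-order bound $\|\by-y_k\|/\|x_k-\bx\|\to 0$ which forces the derivative at $\bx$ to vanish. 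Then $(x_k,\by)\in\gph(F+f)$ for all $k$. The failure of subregularity of $F+f$ at $(\bx,\by)$ is deduced from the coderivative smallness: the pair $(x_k^*,y_k^*)$ with $\|y_k^*\|=1$ and $\|x_k^*\|$ arbitrarily small certifies an ``almost flat'' direction of $\gph F$ at $(x_k,y_k)$, along which one locates primal test points $z_k$ near $x_k$ with $d(\by,(F+f)(z_k))=o(\|z_k-\bx\|)$ while $d(z_k,(F+f)\iv(\by))$ remains of order $\|z_k-\bx\|$; the quotient tends to zero, ruling out subregularity.

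The main obstacle is part (i). The $C^1$ perturbation $f$ must be built in a general Banach space (without access to smooth equivalent norms), matching $\by-y_k$ exactly at each $x_k$, having vanishing \Fr\ derivative at $\bx$, and not introducing spurious elements of $(F+f)\iv(\by)$ too close to the $x_k$. Moreover, the identification of the violating sequence $z_k$ for subregularity involves converting quantitative information carried by the \emph{$\eps$-coderivative}---rather than by an exact coderivative element---into concrete primal data, and this conversion absorbs most of the technical effort. Part (ii), by contrast, is a fairly standard \EVP-plus-fuzzy-calculus contradiction in Asplund spaces, the only delicate issue being the quantitative bookkeeping of the Ekeland parameters $\mu_k$ and $\la_k$.
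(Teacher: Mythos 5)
The paper's proof of \cref{T3.1} is a direct citation: assertion (i) is attributed to \cite[Theorem~3.2(2)]{Gfr11}, and assertion (ii) to the Asplund part of \cite[Theorem~3.2(1)]{Gfr11}, with the only added content being the one-line observation that ${\rm srg}_1F(\bx,\by)>0$ is the same as $(0,0)\notin{\rm Cr}_0(\bx,\by)$. You attempt a from-scratch proof, so the comparison is between your sketch and the (unseen) machinery of \cite{Gfr11}.

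For part (ii) your overall strategy (assume non-subregularity, extract $(x_k,y_k)\in\gph F$ with $\|y_k-\by\|/\|x_k-\bx\|\to0$, apply the \EVP\ to $\varphi(x,y)=\|y-\by\|$ on $\gph F$, then a fuzzy sum rule) is the right one and matches what a direct proof of \cite[Theorem~3.2(1)]{Gfr11} would look like. However, your Ekeland radius $\la_k=\sqrt{\mu_k\|x_k-\bx\|}$ does not guarantee the crucial fact $\hat y_k\ne\by$: if $\hat y_k=\by$ then $\hat x_k\in F^{-1}(\by)$, which is only excluded when $\la_k<d(x_k,F^{-1}(\by))$; but $d(x_k,F^{-1}(\by))$ is bounded below only by $k\,\mu_k/(1+\eta_k)$, and $\sqrt{\mu_k\|x_k-\bx\|}<k\mu_k$ fails whenever $\mu_k/\|x_k-\bx\|<1/k^2$. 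The standard fix is to take $\la_k:=\tfrac12 d(x_k,F^{-1}(\by))$; then $\hat x_k\notin F^{-1}(\by)$ automatically, $\mu_k/\la_k<2(1+\eta_k)/k\to0$, and $\|\hat x_k-\bx\|\asymp\|x_k-\bx\|$ still holds. With this repair part (ii) goes through.

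Part (i) is where there is a genuine gap, in fact two. First, you propose to build $f$ in a general Banach space by ``a Banach-space smoothing construction''; but a general Banach space need not admit a $C^1$ bump function at all, and the norm-based bumps used elsewhere in the paper (such as $s_k(x)=\max\{1-(\|x-x_k\|/\rho_k)^{1+1/k},0\}$) are only Lipschitz, not $C^1$, unless the norm is smooth. A $C^1$ construction must be done with something like compositions of $C^1$ scalar functions with continuous linear functionals; this is exactly the delicate point of \cite[Theorem~3.2(2)]{Gfr11} and cannot be waved through. Second, and more fundamentally, producing $f$ with $f(x_k)=\by-y_k$ and small $\eps$-coderivatives of $F+f$ at $(x_k,\by)$ is \emph{not} sufficient: the paper explicitly warns (just before \cref{T3.1}) that ${\rm srg}_1^+=0$ does not imply absence of subregularity (the zero function of \cref{E1.2} has ${\rm srg}_1^+=0$ and is subregular). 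So the final step of your plan---from ``almost flat $\eps$-coderivative direction'' to actual primal test points $z_k$ with $d(\by,(F+f)(z_k))/d(z_k,(F+f)^{-1}(\by))\to0$---is precisely the content of the cited theorem and is not supplied. Moreover, since $x_k\in(F+f)^{-1}(\by)$ forces $d(z_k,(F+f)^{-1}(\by))\le\|z_k-x_k\|$, your requirement that this distance ``remains of order $\|z_k-\bx\|$'' pins $z_k$ at distance comparable to $\|x_k-\bx\|$ from $x_k$, and at that scale the $\eps$-coderivative at $(x_k,y_k)$ carries no first-order information about $F$ near $z_k$. In short: part (ii) needs only a fixed choice of Ekeland radius, but part (i) is essentially unproved and its two main obstacles (smooth bumps in Banach spaces; converting dual smallness into a primal subregularity violation) are identified but not overcome.
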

\begin{proof}
Assertion~(i) is a consequence of \cite[Theorem~3.2(2)]{Gfr11}.
Assertion~(ii) follows from the Asplund space part of \cite[Theorem~3.2(1)]{Gfr11}
after observing that condition ${\rm srg}_{1}F(\bx,\by)>0$ is equivalent to $(0,0)\notin{\rm Cr}_0(\bx,\by)$, where
\[{\rm Cr}_0(\bx,\by):=\left\{(v, x^*)\in Y\times X^*\;\Big\vert\; \begin{matrix}\exists t_k\downarrow 0,\ (v_k,x_k^*)\to (v,x^*),\
 (u_k,y_k^*)\in \Sp_{X}\times\Sp_{Y^*}\\\mbox{ with }x_k^*\in DF^*(\bx+t_ku_k,\by+t_kv_k)(y_k^*)\end{matrix}\right\}\] is the {\em limit set critical for
subregularity} \cite{Gfr11}.
\end{proof}
\begin{remark}
Using \cite[Corollary~5.8]{Kru15} with condition (g), one can strengthen \cref{T3.1}(ii) and show that
${\rm srg}_{1}F(\bx,\by)\le\srg$.
\sloppy
\end{remark}


In addition to $\mathcal{F}_{clm}$ and $\mathcal{F}_{lip}$ defined by \cref{Fclm,Flip}, respectively,
we are going to consider the following three classes of perturbations of $F$ near $\bx$:
\begin{align*}
\mathcal{F}_{\ovclm}&:=\left\{f:X\to Y\mid
f\mbox{ is \extclm\ at }\bx
\right\},
\\
\mathcal{F}_{lip+ss^*}&:=\{f:X\to Y\mid f \mbox{ is Lipschitz continuous around }\bx \mbox{ and semismooth* at }\bx\},
\\
\mathcal{F}_{\ovclm+ss^*}&:=\left\{f:X\to Y\mid
f\mbox{ is \extclm\ and \ssstar at }\bx
\right\}.
\end{align*}
\if{
\magenta{The elements of the class $\mathcal{F}_{\ovclm}$ are called {\em extended calm} functions. Note that a extended calm function needs not to be Lipschitzian around $\bx$, i.e., we can have $\limsup_{x\to\bx}\lip f(x)=\infty$.}
\HG{10/02/22. I think the notation $clm+lip$ is misleading because the functions need not to be Lipschitzian  near $\bx$. Maybe you have a better proposal than {\em ext.clm}: Something which indicates that we have just a little bit more than calmness.}
}\fi
\if{
\begin{remark}
The functions belonging to $\mathcal{F}_{\ovclm}$ are called {\em extended calm}.
Note that an extended calm function needs not to be Lipschitz continuous around $\bx$, i.e., we can have $\lip f(\bx)=+\infty$.
\end{remark}
\AK{12/02/22.
Has this terminology been used?
}
\HG{14/02/22. No. Please feel free to modify it. But I think that it is important to emphasize that we need a little bit more than calmness}
}\fi

The next
{relations are} immediate from the definitions:
\begin{gather*}
\mathcal{F}_{lip+ss^*}=\mathcal{F}_{lip}\cap
\mathcal{F}_{\ovclm+ss^*},
\quad
{\mathcal{F}_{lip}\cup\mathcal{F}_{\ovclm+ss^*}\subset \mathcal{F}_{\ovclm}.}
\end{gather*}
We now formulate stability estimates for the property of
subregularity with respect to these
classes of perturbations.
Without loss of generality,
we will assume that perturbation functions $f$
in the above definitions
satisfy
$f(\bx)=0$.
In accordance with \cref{rad}, the corresponding radii are defined as follows:
\begin{align*}
\Rad{SR}{\ovclm}:=&
\inf
_{f\in \mathcal{F}_{\ovclm}}
\{\clm f(\bx)\mid
F+f \mbox{ is not subregular at }(\bx,\by)\},
\\
\Rad{SR}{lip+ss^*}:=&\inf_{f\in\mathcal{F}_{lip+ss^*}}\{\lip f(\bx) \mid
F+f \mbox{ is not subregular at } (\bx,\by)\},
\\
\Rad{SR}{\ovclm+ss^*}:=&
\inf
_{f\in \mathcal{F}_{\ovclm+ss^*}}
\{\clm f(\bx)\mid
F+f \mbox{ is not subregular at }(\bx,\by)\}.
\end{align*}
It is easy to check that
\begin{gather*}
\Rad{SR}{\ovclm}\leq \min\left\{\Rad{SR}{lip},\Rad{SR}{\ovclm+ss^*}\right\},\\
\max\left\{\Rad{SR}{lip},\Rad{SR}{\ovclm+ss^*}\right\}\le \Rad{SR}{lip+ss^*}.
\end{gather*}

\begin{theorem}
\label{T3.2}
Let $X$ and $Y$ be Banach spaces,
$F:X\rightrightarrows Y$,
and $(\bx,\by)\in\gph F$.
Then
\begin{gather}
\label{T3.2-1}
\Rad{SR}{lip}\leq{\rm srg}_{1}^+F(\bx,\by),
\\
\label{T3.2-2}
\Rad{SR}{\ovclm}\leq
{\rm srg}_{2}^+F(\bx,\by),
\\
\label{T3.2-3}
\Rad{SR}{\ovclm+ss^*}\leq
{\rm srg}_{4}^+F(\bx,\by).
\end{gather}
Suppose that $X$ and $Y$ are Asplund spaces and $\gph F$ is closed.
Then
\begin{gather}
\label{T3.2-4}
{\rm srg}_{1}F(\bx,\by)\le\Rad{SR}{lip}
\le{\rm srg}_{1}^+F(\bx,\by),
\\
\label{T3.2-5}
\Rad{SR}{\ovclm}
={\rm srg}_{2}F(\bx,\by),
\\
\label{T3.2-6}
\Rad{SR}{lip+ss^*}\ge{\rm srg}_{3}F(\bx,\by),
\\
\label{T3.2-7}
{\rm srg}_{4}F(\bx,\by)\leq\Rad{SR}{\ovclm+ss^*}
\leq{\rm srg}_{4}^+F(\bx,\by).
\end{gather}
\end{theorem}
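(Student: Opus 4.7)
The plan is to treat the upper and lower bounds of \cref{T3.2} by two very different techniques, following the scheme indicated in the introduction. The upper bounds \cref{T3.2-1,T3.2-2,T3.2-3} hold in general Banach spaces and will be obtained by constructing concrete perturbations, ultimately based on \cref{T3.1}(i); the lower bounds in \cref{T3.2-4,T3.2-5,T3.2-6,T3.2-7} require the Asplund framework and rely on \cref{T3.1}(ii) combined with the sum rule in \cref{T2.2}(ii).

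For the upper bounds, let $\alpha$ be any number strictly greater than the appropriate ${\rm srg}_i^+$ quantity. Extracting a sequence $(x_k,y_k,x_k^*,y_k^*,\de_k)$ witnessing smallness, with $(x_k,y_k)\in\gph F$, $x_k^*\in D^*_{\de_k}F(x_k,y_k)(y_k^*)$, $\|y_k^*\|=1$, $\|y_k-\by\|/\|x_k-\bx\|+\|x_k^*\|<\alpha$ (for \cref{T3.2-1}) or the analogous inequalities for \cref{T3.2-2,T3.2-3}, I would build a perturbation $f$ with $f(\bx)=0$, modulus at most $\alpha$, and $f(x_k)=\by-y_k$, so that $x_k\in (F+f)^{-1}(\by)$ while $\|x_k-\bx\|$ stays bounded away from zero relative to $d(\bx,(F+f)^{-1}(\by))$, forcing subregularity to fail. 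The construction follows the pattern of \cite{DonGfrKruOut20}: pick norming vectors $\bar v_k\in Y$ with $\|\bar v_k\|=1$ and $\langle y_k^*,\bar v_k\rangle$ close to $1$, then interpolate a ``hat''-type perturbation locally near each $x_k$, controlling its modulus through $x_k^*$ and the $\eps$-normal remainder terms that are absorbed thanks to $\de_k\|x_k^*\|\to 0$. For \cref{T3.2-3} the additional constraint defining $\Upsilon_{\eps,\de}$ is exactly what is needed to make the interpolating $f$ semismooth* at $\bx$ in the sense of \cref{P1.1}; for \cref{T3.2-2} the weaker requirement $f\in\mathcal{F}_{\ovclm}$ only demands calmness at $\bx$, which is easier to satisfy.

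For the lower bounds, I would fix $\alpha<{\rm srg}_1 F(\bx,\by)$ (or ${\rm srg}_3$, ${\rm srg}_4$) and a perturbation $f$ in the prescribed class with modulus strictly below $\alpha$, and verify ${\rm srg}_1(F+f)(\bx,\by)>0$ (respectively its semismooth* analogue) so as to apply \cref{T3.1}(ii). Given $(x,y,x^*)\in\Delta(F+f)$ with $x$ close to $\bx$, the sum rule \cref{T2.2}(ii) applies because $f\in\mathcal{F}_{lip}$ satisfies the Aubin property, yielding $x_1\approx x_2\approx x$, $y_1\approx y-f(x)$, $y_1^*,y_2^*\approx y^*$, $x_1^*\in D^*F(x_1,y_1)(y_1^*)$, and $x_f^*\in D^*f(x_2)(y_2^*)$ with $x^*$ close to $x_1^*+x_f^*$. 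The Lipschitz bound gives $\|x_f^*\|\le \lip f(\bx)+o(1)$, so $\|x^*\|$ and $\|y-\by\|/\|x-\bx\|$ differ from the corresponding $F$-quantities at $(x_1,y_1,x_1^*)$ by at most $\lip f(\bx)+o(1)$, producing the estimate ${\rm srg}_1(F+f)(\bx,\by)\ge {\rm srg}_1 F(\bx,\by)-\lip f(\bx)>0$. For \cref{T3.2-6}, the semismooth* property of $f$ transfers the $\Upsilon_\eps$-constraint from $F$ to $F+f$ by writing $\langle x^*,x-\bx\rangle-\langle y^*,y-\by\rangle$ as $\langle x_1^*,x_1-\bx\rangle-\langle y_1^*,y_1-\by\rangle$ plus an error controlled by \cref{P1.1} applied to $f$. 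For \cref{T3.2-7} the same scheme is used with firmly calm $\ssstar$ perturbations, replacing the Lipschitz coderivative bound by the $\eps$-coderivative transfer of \cref{LemEpsCoder}, which is exactly what handles the asymmetry between $\clm f(\bx)$ and the Aubin property off $\bx$.

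Finally, \cref{T3.2-5} follows by combining the upper bound \cref{T3.2-2} with the Asplund identity ${\rm srg}_2^+ F={\rm srg}_2 F$ from \cref{P3.1}(v), and the matching lower bound $\Rad{SR}{\ovclm}\ge{\rm srg}_2 F(\bx,\by)$, which is the firmly-calm version of the preceding argument using \cref{LemEpsCoder} in place of the Lipschitz coderivative estimate. I expect the main obstacle to be the upper-bound constructions, particularly \cref{T3.2-3}, where a single perturbation $f$ must simultaneously (a) map the witness sequence onto $\by-y_k$, (b) respect the prescribed modulus bound, and (c) preserve the semismoothness* property at $\bx$; the interleaving of the parameters $\de_k$ with the modulus budget has to be carried out carefully so that the $\eps$-normal errors do not inflate the modulus of the constructed $f$ beyond $\alpha$.
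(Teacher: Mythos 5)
Your high-level two-track strategy (upper bounds via explicit perturbations and \cref{T3.1}(i), lower bounds via \cref{T3.1}(ii) combined with the sum rule \cref{T2.2}(ii)) does match the paper, as does your derivation of \cref{T3.2-5} from \cref{T3.2-2} together with \cref{P3.1}(v) and the firmly-calm lower bound. However, there are two genuine gaps in the details.

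First, the mechanism you describe for the upper bounds does not actually break subregularity. You propose to build one perturbation $f$ with $f(\bx)=0$ and $f(x_k)=\by-y_k$, ``so that $x_k\in(F+f)^{-1}(\by)$ while $\|x_k-\bx\|$ stays bounded away from zero relative to $d(\bx,(F+f)^{-1}(\by))$, forcing subregularity to fail.'' But since $f(\bx)=0$ and $\by\in F(\bx)$, one has $\bx\in(F+f)^{-1}(\by)$, so $d(\bx,(F+f)^{-1}(\by))=0$; and at each $x_k$ the subregularity inequality $\alpha\,d(x_k,(F+f)^{-1}(\by))\le d(\by,(F+f)(x_k))$ degenerates to $0\le 0$. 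Placing a sequence $x_k\to\bx$ inside the inverse image destroys \emph{strong} subregularity (by \cref{ssrg}), but not subregularity itself; the paper warns explicitly in the remark before \cref{T3.1} that ${\rm srg}_1^+F(\bx,\by)=0$ does not preclude subregularity (the zero function of \cref{E1.2} is the counterexample). What your first perturbation actually achieves is ${\rm srg}_1^+(F+f_1)(\bx,\by)=0$ — that is the content of \cref{L3.1} — and a \emph{second} $C^1$ perturbation $f_2$ with $f_2(\bx)=0$, $\nabla f_2(\bx)=0$ is then supplied by \cref{T3.1}(i) (which costs nothing in either modulus) so that $(F+f_1)+f_2$ genuinely fails subregularity; this is what \cref{C3.1} packages. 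You cite \cref{T3.1}(i) in passing, but your concrete description collapses this two-perturbation construction into a single step that, as stated, proves nothing about subregularity.

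Second, for the lower bounds with $f\in\mathcal{F}_{\ovclm}$ or $f\in\mathcal{F}_{\ovclm+ss^*}$ (so \cref{T3.2-5,T3.2-7}) you propose to replace the Lipschitz coderivative bound by ``the $\eps$-coderivative transfer of \cref{LemEpsCoder}.'' That is not how the asymmetry is handled. \cref{LemEpsCoder} is used only in the \emph{upper}-bound construction of Section 4.3 (Case~2). The lower-bound argument instead exploits the definition of firm calmness: the witness points $x_k$ produced by ${\rm srg}_1(F+f)(\bx,\by)=0$ satisfy $x_k\neq\bx$, and $f$ is Lipschitz continuous in a neighbourhood of each such $x_k$, so the sum rule \cref{T2.2}(ii) applies at $(x_k,y_k+f(x_k))$ exactly as in the Lipschitz case; the Aubin property is only needed locally near $x_k$, where firm calmness supplies it. No $\eps$-coderivative transfer at $\bx$ is invoked there. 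You should also note that for \cref{T3.2-6,T3.2-7} the semismooth* constraint has to be carried through the sum-rule estimates — the paper does this by a chain of difference bounds \cref{Tr12,Tr16,Tr17,Tr18} to convert \cref{Tr10} into \cref{Tr11}; your sketch gestures at this but the actual bookkeeping is where the work lies.
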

\if{
\HG{20/11/21.
Unfortunately I do not know an upper bound for the radius $\Rad{SR}{lip+ss^*}$.}
}\fi
\if{
\AK{5/11/21 and 1/01/22.
The second
part of \cref{T3.2}
is formulated in the Asplund space setting as a consequence of the sum rule in \cref{T2.2}(ii).
It should not be difficult to either find or prove the general Banach space version of \cref{T2.2}(ii) and as a consequence the second
part of \cref{T3.2}
in terms of, e.g., Clarke normals, or just add the convex case.
}}\fi

\begin{remark}
\begin{enumerate}
\item
In view of \cref{P3.1}(vii) and (viii), in finite dimensions, estimates \cref{T3.2-4} in \cref{T3.2} recapture
\cref{T1.3-1} in \cref{T1.3}.
\item
In view of \cref{P3.1}(iv), the upper bound ${\rm srg}_{1}^{+}F(\bx,\by)$ for $\Rad{SR}{lip}$ in \cref{T3.2-4} differs from the lower bound ${\rm srg}_{1}F(\bx,\by)$ by a factor of at most two.
As a consequence, in the Asplund space setting, the property of subregularity is stable under small Lipschitz continuous perturbations if and only if ${\rm srg}_{1}F(\bx,\by)>0$.
\item
Formula \cref{T3.2-5} gives the exact value for the radius of subregularity under firmly calm perturbations in Asplund spaces.
\item
We do not know an upper bound for the radius of subregularity under semismooth* and Lipschitz continuous perturbations.
\item
For firmly calm and semismooth* perturbations, we do not know how much the upper bound ${\rm srg}_{4}^{+}F(\bx,\by)$ differs from the lower bound ${\rm srg}_{4}F(\bx,\by)$.
\end{enumerate}
\end{remark}

The next lemma is
a
key ingredient (together with \cref{T3.1}(i)) of the proof of the first part of \cref{T3.2}.
The proofs of the lemma and the second part of \cref{T3.2} are given in \cref{S4}.

\begin{lemma}
\label{L3.1}
Let $X$ and $Y$ be Banach spaces,
$F:X\rightrightarrows Y$,
$(\bx,\by)\in\gph F$, and $\ga>0$.
\begin{enumerate}
\item \label{L3.1(i)}
If ${\rm srg}_{1}^+F(\bx,\by)<\ga$, then there exists a function $f\in\mathcal{F}_{lip}$ such that
$\lip f(\bx)<\ga$ and
${\rm srg}_{1}^+(F+f)(\bx,\by)=0$.

\item \label{L3.1(ii)}
If ${\rm srg}_{2}^+F(\bx,\by)<\ga$, then there exists a function $f\in\mathcal{F}_{\ovclm}$ such that
$\clm f(\bx)<\ga$ and
${\rm srg}_{1}^+(F+f)(\bx,\by)=0$.

\item \label{L3.1(iii)}
If ${\rm srg}_{4}^+F(\bx,\by)<\ga$, then there exists a function {$f\in\mathcal{F}_{\ovclm+ss^*}$} such that
$\clm f(\bx)<\ga$ and
${\rm srg}_{1}^+(F+f)(\bx,\by)=0$.
\end{enumerate}
\end{lemma}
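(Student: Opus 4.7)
The unified strategy for all three parts of \Cref{L3.1} is to use the assumed coderivative witnesses of $F$ to construct an explicit perturbation $f$ whose Fr\'echet derivative at a sequence $\{x_k\}$ converging to $\bx$ exactly cancels these witnesses; the resulting sequence in $\Delta(F+f)$ then forces ${\rm srg}_{1}^+(F+f)(\bx,\by)=0$.

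I would proceed as follows. Fix $\ga_0$ strictly between the srg-quantity and $\ga$, and extract a sequence $(x_k,y_k,x_k^*,y_k^*)$ with $x_k\to\bx$, $x_k\ne\bx$, $y_k\in F(x_k)$, $y_k^*\in\Sp_{Y^*}$, $x_k^*\in D^*_{\de_k}F(x_k,y_k)(y_k^*)$ for some $\de_k\ge 0$, and the relevant witness sum bounded by $\ga_0$; in part (iii), the semismooth* angular constraint encoded in $\Upsilon$ also holds. Passing to a subsequence, arrange geometric decay $\|x_{k+1}-\bx\|\le\rho\|x_k-\bx\|$ with $\rho$ chosen so that $\ga_0(1+\rho)/(1-\rho)<\ga$; this yields the spatial separation $\|x_k-x_{k'}\|\ge(1-\rho)\max\{\|x_k-\bx\|,\|x_{k'}-\bx\|\}$. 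By Hahn--Banach, pick $\eta_k\in\Sp_Y$ with $\ang{y_k^*,\eta_k}$ close to $1$ and set $A_k(x):=-\ang{y_k^*,\eta_k}\iv\ang{x_k^*,x}\eta_k\in L(X,Y)$, so that $A_k^*y_k^*=-x_k^*$ and $\|A_k\|$ is arbitrarily close to $\|x_k^*\|$. Define local affine pieces $f_k(x):=(\by-y_k)+A_k(x-x_k)$; then $f_k(x_k)=\by-y_k$ and $\nabla f_k(x_k)^*y_k^*=-x_k^*$.

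Assemble $f$ from these pieces on pairwise disjoint small balls $B_{r_k}(x_k)$ avoiding $\bx$, with $r_k$ chosen so that $\|A_k\|r_k\le\eps_k\|x_k-\bx\|$ for some $\eps_k\downarrow 0$. For parts (ii) and (iii), where only calmness of $f$ at $\bx$ is required, define $f:=\phi_k f_k$ on $B_{r_k}(x_k)$ with a Lipschitz cutoff $\phi_k$ equal to $1$ on $B_{r_k/2}(x_k)$ and vanishing outside $B_{r_k}(x_k)$, and $f:=0$ elsewhere; for part (i), a more intricate patching is needed to preserve global Lipschitz continuity with constant $<\ga$ on a neighborhood of $\bx$. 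By \Cref{T2.3}, the triples $(x_k,\by,0)=(x_k,y_k+f(x_k),x_k^*+A_k^*y_k^*)$ lie in $\Delta_{\tilde\eps_k}(F+f)$ with $\tilde\eps_k=(\|A_k\|+1)\de_k\to 0$ (using $\de_k\|x_k^*\|\to 0$ in parts (ii) and (iii), trivial in part (i)), and their witness sum $\|y_k+f(x_k)-\by\|/\|x_k-\bx\|+\|x_k^*+A_k^*y_k^*\|$ is identically zero; hence ${\rm srg}_{1}^+(F+f)(\bx,\by)=0$. In part (iii), semismoothness* of $f$ at $\bx$ is verified via \Cref{LemSS} together with the angular bound in the definition of $\Upsilon$.

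The principal obstacle lies in the Lipschitz patching required for part (i). General Banach spaces admit no lossless vector-valued Lipschitz extension theorem, so the construction cannot be reduced to an abstract extension result. Instead, the geometric-decay rate $\rho$ must be interlocked with the ball radii $r_k$ so that the shear produced by any cutoff tapering $f_k$ to zero is absorbed by the cross-term estimate $\|f_k(x_k)-f_{k'}(x_{k'})\|/\|x_k-x_{k'}\|\le\ga_0(1+\rho)/(1-\rho)<\ga$. All remaining ingredients (the Hahn--Banach construction of $A_k$, the application of \Cref{T2.3}, and the semismooth* verification) are comparatively routine.
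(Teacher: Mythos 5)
Your overall strategy---cancel the coderivative witnesses $x_k^*$ of $F$ by a perturbation whose derivative at $x_k$ equals $-x_k^*$ (up to a rank-one factor through $y_k^*$), so that $(x_k,\by,0)$ lands in $\Delta_{\tilde\eps_k}(F+f)$ with $\tilde\eps_k\to 0$---is exactly the paper's strategy, and the application of \cref{T2.3} to transfer coderivative information along the perturbation is also the paper's mechanism. Your rank-one operator $A_k$ built from $\eta_k$ is the same device as the paper's $\ang{x_k^*,\cdot}v_k$ with $\ang{y_k^*,v_k}=1$. For part \eqref{L3.1(ii)} your construction (small disjoint balls with Lipschitz cutoffs, radii shrunk to absorb unbounded $\|x_k^*\|$) is plausibly workable and in the same spirit as the paper's, which uses the cutoff $s_k(x)=\max\{1-(\|x-x_k\|/\rho_k)^{1+1/k},0\}$ with $\rho_k$ scaled by $1/(1+\|x_k^*\|)$.

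However, there are two gaps, one minor and acknowledged, one serious and unacknowledged.

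For part \eqref{L3.1(i)} you explicitly defer the ``intricate Lipschitz patching.'' This deferral conceals the actual content: the naive taper of an affine bump $(\by-y_k)+A_k(\cdot-x_k)$ by a generic Lipschitz cutoff of slope $\sim 1/r_k$ contributes to the Lipschitz constant a term of order $\|\by-y_k\|/r_k$, which blows up unless $r_k\gtrsim t_k$, and a cutoff constant on an inner ball $B_{r_k/2}(x_k)$ only worsens the shear. The paper's resolution is the specific cutoff $s_k(x)=\max\{1-(\|x-x_k\|/\rho_k)^{1+\frac1k},0\}$ with $\rho_k=\frac{k}{k+1}t_k$: the exponent $1+1/k>1$ makes $s_k$ differentiable at $x_k$ with zero gradient (so \cref{T2.3} applies), the exponent $\to1$ lets the mean-value bound $(1+1/k)(\|y_k-\by\|/\rho_k+\|x_k^*\|\|v_k\|)$ converge to essentially the witness sum $\ga'$, and $\rho_k\approx t_k$ gives the ball-separation needed for the global Lipschitz estimate across bumps. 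Your proposal neither produces this cutoff nor explains how the ``interlocking'' of $\rho$ and $r_k$ would achieve a global constant below $\ga$.

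The serious gap is part \eqref{L3.1(iii)}. You propose the same bump-on-disjoint-balls $\phi_k f_k$ construction and assert that semismoothness* at $\bx$ ``is verified via \cref{LemSS} together with the angular bound in the definition of $\Upsilon$.'' This does not work. The $\Upsilon$-constraint controls coderivative normals of $F$ at the points $(x_k,y_k)$; it says nothing about the limiting normals to $\gph f$ at points near $\bx$. A perturbation built from bumps on Euclidean balls $B_{r_k}(x_k)$ around $\bx$ is not even close to positively homogeneous: for $x$ on the boundary region of a bump, the directional derivatives $f'(x;x-\bx)$ and $f'(x;\bx-x)$ are governed by the gradient of the cutoff $\phi_k$, which is $O(1/r_k)$ and points transversally to $x-\bx$, so the hypothesis of \cref{LemSS} fails outright. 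The paper's proof of \eqref{L3.1(iii)} is structurally different from \eqref{L3.1(i)} and \eqref{L3.1(ii)}: it replaces ball supports by \emph{cone} supports $\{x:\|r_k(x)\|<\tau_k\alpha_k(x)\}$ apexed at $\bx$ along the directions $u_k=(x_k-\bx)/t_k$, and the bump $\hat A_k=s_k A_k$ is a product of a degree-$0$ positively homogeneous cutoff $s_k$ and a linear $A_k$, hence positively homogeneous of degree $1$; semismoothness* then comes from \cref{CorSS}. When the directions $u_k$ accumulate at a single direction $u$ (Case 2), the cone-disjointness fails and the paper resorts to an entirely separate logarithmic interpolation $T_k$ between consecutive $\hat A_k$'s, verifying the \cref{LemSS} condition via the shrinking factor $1/\ln(t_{k-1}/t_k)$. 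Neither mechanism is present in your proposal, and without one the constructed $f$ does not belong to $\mathcal{F}_{\ovclm+ss^*}$.
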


The next statement is a consequence of \cref{L3.1} and \cref{T3.1}(i).
It complements \cref{T3.1}(i) and immediately implies the estimates in the
{first}
part of \cref{T3.2}.

\begin{corollary}
\label{C3.1}
Let $X$ and $Y$ be Banach spaces,
$F:X\rightrightarrows Y$,
$(\bx,\by)\in\gph F$, and $\ga>0$.
\begin{enumerate}
\item
If ${\rm srg}_{1}^+\,F(\bx,\by)<\ga$, then there exists a function $f\in\mathcal{F}_{lip}$ such that $\lip f(\bx)<\ga$, and $F+f$ is not
subregular at $(\bx,\bar{y})$.
\item
If ${\rm srg}_{2}^+\,F(\bx,\by)<\ga$, then there exists a function $f\in\mathcal{F}_{\ovclm}$ such that $\clm f(\bx)<\ga$, and $F+f$ is not
subregular at $(\bx,\bar{y})$.

\item
If ${\rm srg}_{4}^+\,F(\bx,\by)<\ga$, then there exists a function {$f\in\mathcal{F}_{\ovclm+ss^*}$} such that $\clm f(\bx)<\ga$, and $F+f$ is not
subregular at $(\bx,\bar{y})$.
\end{enumerate}
\end{corollary}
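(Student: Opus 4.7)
The strategy is to chain \cref{L3.1} with \cref{T3.1}(i): for each of (i)--(iii), use the corresponding part of \cref{L3.1} to produce a first perturbation $f_1$ in the target class with the required modulus that drives ${\rm srg}_{1}^+(F+f_1)(\bx,\by)$ down to zero, and then apply \cref{T3.1}(i) to the perturbed mapping $F+f_1$ to obtain a second perturbation $f_2$ that destroys subregularity. The final perturbation is then $f:=f_1+f_2$, and one must check that $f$ lies in the target class and has modulus $<\ga$.

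Concretely, fix $\ga>0$ and suppose the relevant hypothesis holds. By the appropriate part of \cref{L3.1} there exists $f_1$ in the target class with $\mathrm{mod}\,f_1(\bx)<\ga$ and ${\rm srg}_{1}^+(F+f_1)(\bx,\by)=0$. Applying \cref{T3.1}(i) to $F+f_1$ in place of $F$ yields a $C^1$ function $f_2:X\to Y$ with $f_2(\bx)=0$ and $\nabla f_2(\bx)=0$ such that $(F+f_1)+f_2$ fails to be subregular at $(\bx,\by)$. Set $f:=f_1+f_2$; then $F+f$ is not subregular at $(\bx,\by)$. The modulus bound is immediate: since $f_2$ is $C^1$ with $\nabla f_2(\bx)=0$, we have $\lip f_2(\bx)=\clm f_2(\bx)=0$, so by subadditivity $\mathrm{mod}\,f(\bx)\le\mathrm{mod}\,f_1(\bx)<\ga$.

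Class membership is straightforward for (i) and (ii): $\mathcal{F}_{lip}$ is closed under addition, handling (i); for (ii), $f_2\in\mathcal{F}_{C^1}\subset\mathcal{F}_{lip}\subset\mathcal{F}_{\ovclm}$, and $\mathcal{F}_{\ovclm}$ is closed under addition (calmness at $\bx$ plus calmness at $\bx$ gives calmness at $\bx$; Lipschitz continuity around each $x\neq\bx$ plus Lipschitz continuity everywhere near $\bx$ yields the same). For (iii) the analogous argument works for firm calmness; the only substantive point is to verify that $f=f_1+f_2$ is \ssstar at $\bx$ whenever $f_1$ is \ssstar at $\bx$ and $f_2$ is $C^1$ with $\nabla f_2(\bx)=0$.

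The verification of preservation of the \ssstar property under a $C^1$ perturbation with vanishing gradient is the one nonroutine step, and it is the obstacle I would focus my effort on. My plan is to invoke the coderivative characterization in \cref{P1.1} and the coderivative sum rule of \cref{T2.3}: for $y^*\in\Sp_{Y^*}$ and $x^*\in D^*_\de(f_1+f_2)(x,f_1(x)+f_2(x))(y^*)$, \cref{T2.3} gives $x^*-\nabla f_2(x)^*y^*\in D^*_{\de'}f_1(x,f_1(x))(y^*)$ for a comparable $\de'$. Writing
\begin{align*}
\ang{x^*,x-\bx}-\ang{y^*,(f_1{+}f_2)(x)-(f_1{+}f_2)(\bx)}
&=\ang{x^*-\nabla f_2(x)^*y^*,x-\bx}-\ang{y^*,f_1(x)-f_1(\bx)}\\
&\quad{}-\ang{y^*,f_2(x)-f_2(\bx)-\nabla f_2(x)(x-\bx)},
\end{align*}
the first line is controlled by the \ssstar property of $f_1$ applied at $(x,f_1(x))$, and the second line is $o(\|x-\bx\|)$ uniformly in $\|y^*\|\le1$ by $\nabla f_2(\bx)=0$, continuity of $\nabla f_2$, and the integral Taylor remainder $f_2(x)-f_2(\bx)=\int_0^1\nabla f_2(\bx+t(x-\bx))(x-\bx)\,dt$. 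Combining both bounds yields the \ssstar inequality for $f_1+f_2$, completing the argument.
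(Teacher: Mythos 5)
Your proposal follows exactly the route the paper intends: the paper presents \cref{C3.1} as ``a consequence of \cref{L3.1} and \cref{T3.1}(i),'' and your chain $f_1$ (from \cref{L3.1}) $\to$ $f_2$ (from \cref{T3.1}(i)) $\to$ $f:=f_1+f_2$ is precisely that, with the modulus bound and class-membership checks filling in what the paper leaves implicit. Your treatment of the only genuinely nontrivial gap --- that adding a $C^1$ function $f_2$ with $\nabla f_2(\bx)=0$ preserves semismoothness* at $\bx$ --- is correct: the decomposition
\begin{equation*}
\ang{x^*,x-\bx}-\ang{y^*,(f_1{+}f_2)(x)}
=\bigl[\ang{x^*-\nabla f_2(x)^*y^*,x-\bx}-\ang{y^*,f_1(x)}\bigr]
-\ang{y^*,f_2(x)-\nabla f_2(x)(x-\bx)}
\end{equation*}
together with \cref{T2.3} (which puts $x^*-\nabla f_2(x)^*y^*$ into $D^*_{\de'}f_1(x,f_1(x))(y^*)$ with $\de'=(\|\nabla f_2(x)\|+1)\de$), the firm calmness of $f_1$ (to keep $(x,f_1(x))$ inside the relevant ball), and the Taylor-remainder estimate for $f_2$ via continuity of $\nabla f_2$ and $\nabla f_2(\bx)=0$, does yield the semismooth* inequality of \cref{P1.1} for $f_1+f_2$ with arbitrarily small $\eps$.
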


\begin{example}
Let $F:\R\to\R$ be given by
\[F(x):=
\begin{cases}
x\sin\frac1x&\mbox{if $x\ne 0$,}\\
0&\mbox{if $x=0$,}
\end{cases}\]
and $\bx=\by=0$.
{The function $F$ is obviously subregular at $(0,0)$.
Next we show that the property is stable with respect to perturbations from $\mathcal{F}_{\ovclm+ss^*}$.}
For every $x\ne 0$, we have $y:=F(x)=x\sin\frac1x$ and $y'=\sin\frac1x -\frac1x\cos\frac1x$.
If $|y^*|=1$ and $x^*=y'y^*$, then
\begin{gather*}
\al:=\dfrac{|\ang{x^*,x-\bx}-\ang{y^*,y-\by}|} {\|(x^*,y^*)\|\|(x-\bx,y-\by)\|}=
\dfrac{\left|\left(\sin\frac1x -\frac1x\cos\frac1x\right)y^*x-y^*x\sin\frac1x\right|} {\max\{|x^*|,1\}\left(|x|+\left|x\sin\frac1x\right|\right)}
=
\dfrac{\left|\frac1x\cos\frac1x\right|} {\max\{|x^*|,1\}\left(1+\left|\sin\frac1x\right|\right)}.
\end{gather*}
If $|x^*|\to+\infty$, then $\left|\frac1x\cos\frac1x\right|\to+\infty$, in which case $\dfrac{\left|\frac1x\cos\frac1x\right|} {\max\{|x^*|,1\}}\to1$.
Hence, $\al\to0$ if and only if $\frac1x\cos\frac1x\to0$ as $x\to0$.
The latter condition obviously implies $\left|\sin\frac1x\right|\to1$, and consequently, $|x^*|\to1$ and $|y|/|x|\to1$.
By \cref{srg4,srg4+,Ups,Ups0},
${\rm srg}_{4}F(\bx,\by)=
{\rm srg}_{4}^+F(\bx,\by)=1$, and it follows from \cref{T3.2} that
$\Rad{SR}{\ovclm+ss^*}=1$.
\sloppy

At the same time, the property is not stable with respect to perturbations from $\mathcal{F}_{\ovclm}$.
Set $x_k:=(k\pi)^{-1}$ for all $k\in\N$.
Then $F(x_k)=0$, $F$ is differentiable at $x_k$, and $x_k\to\bx$.
By \cref{srg2,Del,Del0},
${\rm srg}_{2}F(\bx,\by)=0$, and it follows from \cref{T3.2} that
$\Rad{SR}{\ovclm}=0$.
\end{example}

The formulas for $\Rad{SR}{\ovclm}$ in \cref{T3.2} together with definitions \cref{srg2,srg2+,Del,Del0}
suggest that the property of  subregularity is not very likely to be stable
{with respect to perturbations from $\mathcal{F}_{\ovclm}$}
whenever the limit in \cref{ssrg} equals $0$, i.e., $F$ is not strongly subregular at $(\bx,\by)$.
The next example illustrates such an unlikely situation.

\begin{example}
Let $F:\R\to\R$ be given by
\[F(x):=
\begin{cases}
    [-x,x]&\mbox{if $x=1/k$ for some $k\in\N$},\\
    x&\mbox{otherwise},
\end{cases}\]
and $\bx=\by=0$.
With $x_k:=1/k$, we obviously have
$0\in F(x_k)$, and $\bx\ne x_k\to\bx$; hence, the limit in \cref{ssrg} equals $0$.
At the same time, for any $y\in(-x_k,x_k)$ and
$(x^*,y^*)\in N_{\gph F}(x_k,y)$, it holds $y^*=0$,
{i.e., $D^*F(x_k,y)(\Sp_{Y^*})=\es$}.
At all other points $(x,y)\in\gph F$ with $x\ne0$, i.e., when either $x=x_k$ and $y=\pm x_k$ for some $k\in\N$ or $y=x$, we have $\|y-\by\|/\|x-\bx\|=1$, and
{$D^*F(x,y)(\Sp_{Y^*})\ne\es$}.
By \cref{srg2,srg2+,Del,Del0},
${\rm srg}_{2}F(\bx,\by)=
{{\rm srg}_{2}^+F(\bx,\by)}
=1$, and it follows from \cref{T3.2} that
$\Rad{SR}{\ovclm}=1$.
\end{example}

In the case when the lower limit in \cref{ssrg} is strictly positive, i.e., $F$ is strongly subregular at $(\bx,\by)$, the property of subregularity is stable
{with respect to perturbations from $\mathcal{F}_{\ovclm}$} by \cref{T3.2}.
We now present the exact formula for the radii of strong subregularity
$\Rad{sSR}{\mathcal{P}}$ defined
in accordance with \cref{rad}
under calm, \extclm, or \extclm\ \ssstar perturbations, i.e.,
with $\mathcal{P}$ equal to $clm$, $\ovclm$, or $\ovclm+ss^*$.
\if{
Define:
\begin{align*}
\Rad{sSR}{\mathcal{P}}&:=
\inf_{f\in\mathcal{F}_{\mathcal{P}}}
\{\clm f(\bx)\mid
F+f \mbox{ is not strongly subregular at }(\bx,\by)\},
\if{
\Rad{sSR}{clm}&:=
\inf_{f\in\mathcal{F}_{clm}}
\{\clm f(\bx)\mid
F+f \mbox{ is not strongly subregular at }(\bx,\by)\},
\\
\Rad{sSR}{\ovclm}&:=
\inf_{f\in \mathcal{F}_{\ovclm}}
\{\clm f(\bx)\mid
F+f \\
&\hspace{35mm}\mbox{ is not strongly subregular at } (\bx,\by)\},
\\
\Rad{sSR}{\ovclm+ss^*}&:=
\inf_{f\in \mathcal{F}_{\ovclm+ss^*}}
\{\clm f(\bx)\mid
F+f \\
&\hspace{35mm}\mbox{ is not strongly subregular at } (\bx,\by)\}.
}\fi
\end{align*}
where $\mathcal{P}$ stands for $clm$, $\ovclm$, or $\ovclm+ss^*$.
}\fi

\begin{theorem}
\label{T3.3}
Let $X$ and $Y$ be Banach spaces, and $F:X\rightrightarrows Y$ be 
{strongly subregular} at
$(\bx,\by)\in\gph F$.
Then
\begin{equation}
\label{T3.3-1}
\Rad{sSR}{\ovclm+ss^*}=\Rad{sSR}{\ovclm}=
\Rad{sSR}{clm}=\srg.
\end{equation}
\end{theorem}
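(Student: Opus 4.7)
The monotonicity $\Rad{sSR}{clm}\le\Rad{sSR}{\ovclm}\le\Rad{sSR}{\ovclm+ss^*}$ follows at once from the class inclusions $\mathcal{F}_{\ovclm+ss^*}\subset\mathcal{F}_{\ovclm}\subset\mathcal{F}_{clm}$, since restricting to a smaller class of perturbations can only enlarge the infimum defining the radius. So the proof reduces to the two outer inequalities $\Rad{sSR}{clm}\ge\srg$ and $\Rad{sSR}{\ovclm+ss^*}\le\srg$.

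For the lower bound, take any $f\in\mathcal{F}_{clm}$ with $\clm f(\bx)<\srg$, assume without loss of generality that $f(\bx)=0$, and fix $\lambda,\alpha$ with $\clm f(\bx)<\lambda<\alpha<\srg$. The simplified representation \cref{ssrg} of $\srg$, available because $F$ is strongly subregular at $(\bx,\by)$, yields $d(\by,F(x))\ge\alpha\|x-\bx\|$ for $x\ne\bx$ sufficiently close to $\bx$, while calmness gives $\|f(x)\|\le\lambda\|x-\bx\|$ on a neighbourhood of $\bx$. The triangle inequality then leads to
\[d(\by,(F+f)(x))\ge d(\by,F(x))-\|f(x)\|\ge(\alpha-\lambda)\|x-\bx\|>0\]
for all such $x$. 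This simultaneously shows that $\bx$ is isolated in $(F+f)^{-1}(\by)$ and provides a strong subregularity modulus of at least $\alpha-\lambda$ for $F+f$. Hence $\Rad{sSR}{clm}\ge\srg$.

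For the upper bound, fix $\mu>\srg$ and $\mu'\in(\srg,\mu)$, and use \cref{ssrg} to extract a sequence $x_k\to\bx$, $x_k\ne\bx$, with points $y_k\in F(x_k)$ and $\|y_k-\by\|\le\mu'\|x_k-\bx\|$. Set $t_k:=\|x_k-\bx\|$, $u_k:=(x_k-\bx)/t_k$, and $v_k:=(\by-y_k)/t_k$ (so $\|v_k\|\le\mu'$). The plan is to build $f\in\mathcal{F}_{\ovclm+ss^*}$ with $\clm f(\bx)\le\mu$ such that $\by\in(F+f)(x_k)$ along (a subsequence of) $\{x_k\}$, making $\bx$ non-isolated in $(F+f)^{-1}(\by)$ and thereby destroying strong subregularity. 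The natural framework is positive homogeneity at $\bx$: by \cref{CorSS} any such $f$ is automatically \ssstar at $\bx$, so firm calmness reduces to Lipschitzness away from $\bx$. Passing to a subsequence, the directions $\{u_k\}$ either admit an $\eta$-separated subsequence in $\Sp_X$ or converge in norm to some $u\in\Sp_X$. In the separated case I use Hahn-Banach to obtain $x_k^*\in\Sp_{X^*}$ with $\langle x_k^*,u_k\rangle=1$ and pick Lipschitz scalar bumps $\psi_k:\Sp_X\to\R$ with $\psi_k(u_k)=1$ and disjoint supports around $u_k$, setting
\[f(x):=\sum_k\psi_k\!\Big(\tfrac{x-\bx}{\|x-\bx\|}\Big)\|x-\bx\|\,v_k\quad(x\ne\bx),\qquad f(\bx):=0,\]
which is positively homogeneous at $\bx$, Lipschitz on each of the disjoint cones (hence on a neighbourhood of any $x\ne\bx$), and satisfies $f(x_k)=t_kv_k=\by-y_k$ and $\clm f(\bx)\le\sup_k\|v_k\|\le\mu'$. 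In the clustered case $u_k\to u$, one uses a single linear perturbation $f(x):=\langle x^*,x-\bx\rangle v$ with $\langle x^*,u\rangle=1$ from Hahn-Banach and $v$ chosen (via a further subsequence extraction in $Y$) so that $\|v_k-\langle x^*,u_k\rangle v\|\to0$; this may only break the subregularity estimate rather than isolation, but equally forces $F+f$ to fail strong subregularity.

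The principal obstacle is this upper-bound construction in the general Banach setting. In finite dimensions, compactness of $\Sp_X$ and of bounded subsets of $Y$ lands one automatically in the clustered subcase and gives the classical linear perturbation from \cite{DonRoc04}; in infinite dimensions one must handle separated and clustered sequences of $\{u_k\}$ (and secondarily of $\{v_k\}$), and the key conceptual device is positive homogeneity, which via \cref{CorSS} delivers the \ssstar property for free, reducing the task to designing a positively homogeneous Lipschitz function with prescribed behaviour at well-chosen points. Letting $\mu\downarrow\srg$ yields $\Rad{sSR}{\ovclm+ss^*}\le\srg$, which combined with the monotonicity and the lower bound completes \cref{T3.3-1}.
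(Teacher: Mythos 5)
Your monotonicity chain and the lower bound $\Rad{sSR}{clm}\ge\srg$ are correct; the direct argument you give (the triangle inequality $d(\by,(F+f)(x))\ge d(\by,F(x))-\|f(x)\|\ge(\alpha-\lambda)\|x-\bx\|$, using the simplified representation \cref{ssrg}) is a self-contained version of what the paper delegates to \cite[Theorem~2.1]{CibDonKru18}. Your separated sub-case of the upper bound, with disjoint positively homogeneous bumps, is also essentially the right idea (and matches the spirit of the paper's Case~1, which uses cones $\{x\mid\|r_k(x)\|<\tau_k\alpha_k(x)\}$ instead of bumps on the sphere).

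The genuine gap is in your clustered sub-case, and in particular in the situation the paper isolates as Case~2 of \cref{S4.3}/\cref{S4.5}: a direction $u$ that the sequence $u_k$ hits infinitely often (so you may assume $u_k\equiv u$ along a subsequence). There every $\psi_k$-bump must live at the same point of $\Sp_X$, so disjoint supports are impossible, and your fall-back is a single linear map $f(x)=\langle x^*,x-\bx\rangle v$ with ``$v$ chosen via a further subsequence extraction in $Y$'' so that $\|v_k-\langle x^*,u_k\rangle v\|\to0$. Since $\langle x^*,u_k\rangle\to1$, this forces $v_k\to v$ in norm. But $(v_k)$ is merely a bounded sequence in $Y$; in an infinite-dimensional Banach space it need have no norm-convergent subsequence, so there is no such $v$, and no single linear (or even positively homogeneous, i.e.\ direction-wise linear) perturbation can interpolate infinitely many non-convergent values $v_k$ along the single ray $\bx+\R_+u$. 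Your parenthetical acknowledgement that ``this may only break the subregularity estimate rather than isolation'' does not rescue the step, because the map $f$ itself is not defined.

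What the paper does instead is (a) split the cases not by topological clustering of $(u_k)$ but by whether some direction $u$ appears infinitely many times --- if not, the shrinking-balls construction in Case~1 of \cref{S4.5} still works even when $u_k\to u$ with all terms distinct, because the radii $\rho_k>0$ are allowed to tend to $0$; and (b) in the genuinely stationary case $u_k\equiv u$, replace the positively homogeneous ansatz by a logarithmic interpolation: the function $g$ of \cref{T6.2P31} agrees with the $k$-th homogeneous piece $\hat A_k$ on the sphere $\alpha(x)=t_k$ and blends linearly in $\ln\alpha(x)$ between consecutive levels $t_k$ and $t_{k-1}$. The gap control $\ln(t_{k-1}/t_k)>k$ from \cref{T6.2P22} makes the interpolation error $O(1/k)$, which is what allows \cref{LemSS} to deliver the semismooth* property even though $(v_k)$ does not converge. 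So the non-compactness of $Y$ is circumvented by giving up exact positive homogeneity and trading it for this asymptotic homogeneity along the ray, rather than by finding a convergent subsequence of $(v_k)$. This is the missing ingredient your proposal needs.
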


\begin{remark}
The last equality in \cref{T3.3-1} strengthens the corresponding assertion in \cref{T1.2} (taken from \cite[Theorem~4.6]{DonRoc04}) which only guarantees
this equality in finite dimensions.
\end{remark}
\if{
\AK{20/04/22.
To be checked.}
\HG{25/05/22. I think it is ok.}
}\fi

The next lemma is
a
key ingredient in the proof of \cref{T3.3}.
The proof of the lemma is given in \cref{S4}.

\begin{lemma}
\label{L3.2}
Let $X$ and $Y$ be Banach spaces, and $F:X\rightrightarrows Y$ be 
{strongly subregular} at
$(\bx,\by)\in\gph F$.
Then
\begin{equation}
\label{Eq_ssrgLB}
\Rad{sSR}{{\ovclm}+ss^*}\leq \srg.
\end{equation}
\end{lemma}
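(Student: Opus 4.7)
The plan is to show that for any $\ga > \srg$ (assuming $\srg < \infty$, otherwise trivial) one can construct $f \in \mathcal{F}_{\ovclm+ss^*}$ with $\clm f(\bx) < \ga$ such that $F+f$ fails strong subregularity at $(\bx,\by)$, which gives $\Rad{sSR}{\ovclm+ss^*} < \ga$, and letting $\ga \downarrow \srg$ yields the conclusion. To that end, fix $\ga' \in (\srg, \ga)$. Since $F$ is strongly subregular, $\bx$ is isolated in $F^{-1}(\by)$, so the simplified formula \cref{ssrg} applies; choose a sequence $(x_k, y_k) \in \gph F$ with $x_k \neq \bx$, $x_k \to \bx$, and $\|y_k - \by\| < \ga' \|x_k - \bx\|$ for all $k$. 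Writing $t_k := \|x_k - \bx\|$ and $u_k := (x_k - \bx)/t_k$, I would pass to a subsequence so that $t_{k+1} \leq t_k/4$; consequently the points $x_k$ are widely separated compared with their distances from $\bx$.

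The main idea is to assemble $f$ as a sum of positively-homogeneous-at-$\bx$ ``blobs'', each localised near one $x_k$, so that \cref{CorSS} delivers semismoothness$^*$ for free. For each $k$, I use Hahn-Banach to pick $x_k^* \in S_{X^*}$ with $\langle x_k^*, x_k - \bx\rangle = t_k$ and set $\ell_k(x) := \langle x_k^*, x - \bx\rangle(\by - y_k)/t_k$. This is a continuous linear map with $\ell_k(\bx) = 0$, $\ell_k(x_k) = \by - y_k$, and operator norm $\|\ell_k\| = \|\by - y_k\|/t_k < \ga'$. I then choose a positively-homogeneous-at-$\bx$ cutoff $\phi_k : X \to [0,1]$ (depending only on the direction $(x - \bx)/\|x - \bx\|$) supported in a ``cone'' $C_k$ around the ray $\bx + \R_+ u_k$, with $\phi_k(x_k) = 1$; the cones are taken pairwise disjoint (possibly after a further subsequence to separate the $u_k$'s on $S_X$). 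Then $f := \sum_k \phi_k \ell_k$ is well defined because the supports $C_k$ are disjoint, and each summand $\phi_k \ell_k$ is positively homogeneous at $\bx$ (product of a positively homogeneous cutoff and a linear map vanishing at $\bx$), so $f$ itself is positively homogeneous at $\bx$ and hence semismooth$^*$ there by \cref{CorSS}.

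The verification then proceeds in four points: (i) $f(\bx) = 0$ and $f(x_k) = \phi_k(x_k)\ell_k(x_k) = \by - y_k$, so $\by \in (F+f)(x_k)$, and the sequence $x_k \to \bx$ with $x_k \neq \bx$ shows that $\bx$ is not isolated in $(F+f)^{-1}(\by)$, forcing $F+f$ to fail strong subregularity at $(\bx, \by)$; (ii) on each cone $\|f(x)\| \leq \|\ell_k(x)\| \leq \|\ell_k\|\cdot\|x - \bx\| < \ga'\|x - \bx\|$, while $f \equiv 0$ outside all cones, giving $\clm f(\bx) \leq \ga' < \ga$; (iii) $f$ is locally Lipschitz around every $x \neq \bx$ near $\bx$ (each $f_k$ is smooth in the interior of its cone and vanishes on the boundary, and the cones are disjoint), so $f$ is \extclm; (iv) combining (iii) with the semismoothness$^*$ above places $f \in \mathcal{F}_{\ovclm+ss^*}$. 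The main technical obstacle is the disjointness of the cones $C_k$: it demands that the directions $u_k$ be adequately separated on the unit sphere $S_X$, which is delicate in general Banach spaces when the $u_k$'s accumulate. This is handled by a careful subsequence selection, and where accumulation persists, by adapting the construction so that the accumulating directions are treated jointly via a rank-one linear piece in the limiting direction together with transverse positively homogeneous corrections; the full technical details are relegated to \cref{S4}.
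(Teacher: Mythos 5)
Your overall blueprint matches the paper's: pick a sequence $(x_k,y_k)\in\gph F$ realising the liminf in the simplified strong-subregularity formula \cref{ssrg}, build a function $f$ supported in pairwise disjoint cones around the rays $\bx+\R_+(x_k-\bx)$ with $f(x_k)=\by-y_k$, and use positive homogeneity at $\bx$ plus \cref{CorSS} to get semismoothness\textsuperscript{*} essentially for free, together with firm calmness and failure of strong subregularity of $F+f$. In the paper's Case~1 — when the directions $u_k=(x_k-\bx)/t_k$ can be made pairwise separated on the unit sphere after a subsequence extraction — the construction you describe is exactly what the paper carries out (with the cutoff $s_k$ depending only on $r_k(x)/\al_k(x)$ and the linear piece $A_k(x)=\al_k(x)v_k$).

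The gap is in the accumulating-direction case. If some direction $u$ repeats infinitely often (as happens already in one dimension, or more generally whenever $F$ is degenerate along a single ray), you must work with the stationary subsequence $(x_k-\bx)/t_k=u$, and there positive homogeneity is fatal: a positively homogeneous $f$ forces $f(\bx+t_ku)=t_k f(\bx+u)$, so you would need $f(\bx+u)=(\by-y_k)/t_k=-v_k$ simultaneously for all $k$, which is impossible unless all the $v_k$ coincide — and generally they do not. Hence no ``rank-one linear piece in the limiting direction with transverse positively homogeneous corrections'' can even interpolate the required values $f(x_k)=\by-y_k$, let alone keep the other properties. The paper's Case~2 therefore gives up positive homogeneity and interpolates between the conic pieces $\hat A_k$ and $\hat A_{k-1}$ via the logarithmic weight $\ln(\al(x)/t_k)/\ln(t_{k-1}/t_k)$ (formulas \cref{T6.2P23.2,T6.2P31,T6.2P32}), choosing $t_k<e^{-k}t_{k-1}$ so that the interpolation defect is $O(1/k)$, and then proves semismoothness\textsuperscript{*} directly through the directional-derivative criterion of \cref{LemSS} rather than through \cref{CorSS}. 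Your sketch leaves this case open, so as written the proposal only establishes the conclusion when the directions do not accumulate at a repeating point.
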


\begin{proof}\emph{of \cref{T3.3}}
The statement is a consequence of \cref{L3.2} and the next chain of inequalities:
\begin{gather*}
\srg\leq\Rad{sSR}{clm}\leq
\Rad{sSR}{\ovclm}\leq
\Rad{sSR}{\ovclm+ss^*}.
\end{gather*}
For the first inequality we refer to \cite[Theorem~2.1]{CibDonKru18}, while the other two are trivial since $\mathcal{F}_{\ovclm+ss^*}\subset\mathcal{F}_{\ovclm} \subset\mathcal{F}_{clm}$.
\end{proof}
\if{
\AK{6/02/22.
Should \cref{Eq_ssrgLB} be formulated as a separate lemma with its proof moved to \cref{S4}?
}
\HG{11/02/22. Maybe this is a good idea.}
}\fi

\section{Proofs of the estimates for the radii of subregularity and strong subregularity}
\label{S4}

\subsection{Lipschitz perturbations: the upper estimate}
\label{S4.1}

We prove \cref{L3.1}\eqref{L3.1(i)} which provides
a
key ingredient for the proof of the estimate \cref{T3.2-1} in \cref{T3.2}.

Suppose that ${\rm srg}_{1}^+F(\bx,\by)<\ga<+\infty$.
By definitions \cref{srg1+,Del}, there exist sequences 
{$x_k\to\bx$ with $x_k\ne\bx$},
$\eps_k\downarrow0$,
{$y_k\in F(x_k)$},
$y_k^*\in\Sp_{Y^*}$, and
$x_k^*\in D^*_{\eps_k}F(x_k,y_k)(y_k^*)$
such that
\begin{align}
\label{T4.1P1}
\ga\,':={\sup_{k\in\N}
\left(\norm{x_k^*}+\frac{\|y_k-\by\|}{\|x_k-\bx\|}\right)<\ga.}
\end{align}
Denote $t_k:=\|x_k-\bx\|$.
By passing to subsequences, we can ensure that
\begin{gather}
\label{T4.1P2}
t_{k+1}<\frac{t_k}{2(k+1)}
\quad
(k\in\N).
\end{gather}
Set
\begin{align}
\label{T4.1P3}
\rho_k:=\frac{k}{k+1}t_k,
\end{align}
$\al_k:=t_k+\rho_k$ and $\be_k:=t_k-\rho_k$.
From \cref{T4.1P2} and \cref{T4.1P3} we obtain
\begin{align*}
\al_{k+1}=\frac{2k+3}{k+2}t_{k+1}< \frac{2k+3}{2(k+1)(k+2)}t_{k}<\frac{1}{k+1}t_{k}=\be_k.
\end{align*}
Observe that $\be_k<\|x-\bx\|<\al_k$ for all $x\in B_{\rho_k}(x_k)$.
Hence,
\begin{align}
\label{T4.1P4}
B_{\rho_k}(x_k)\cap B_{\rho_i}(x_i)=\es
\qdtx{for all}k\ne i.
\end{align}
For each $k\in\N$, choose a point $v_k\in Y$ such that \begin{align}
\label{T4.1P5}
\ang{y_k^*,v_k}=1\AND \|v_k\|<1+1/k.
\end{align}
For all $k\in\N$ and $x\in X$, set
\begin{subequations}\label{Eqf_k_LipPerm}
\begin{align}
s_{k}(x):=& \max\left\{1-\left(\|x-x_k\|/\rho_k\right)^{1+\frac1k}, 0\right\},
\\
g_{k}(x):=&y_k-\by+\ang{x_k^*,x-x_k}v_k,
\\
f_{k}(x):=&s_{k}(x)g_{k}(x).
\end{align}
\end{subequations}
Observe
that $s_{k}(x)=0$ and $f_{k}(x)=0$ for all $x\notin B_{\rho_k}(x_k)$.
In view of \cref{T4.1P4},
the function
\begin{equation}\label{Eqf_LipPerm}
f(x):=-\sum_{k=1}^\infty f_k(x),\quad x\in X,
\end{equation}
is well defined,
$f(x)=-f_k(x)$ for all $x\in B_{\rho_k}(x_k)$ and all $k\in\N$, and $f(x)=0$ for all $x\notin\cup_{k=1}^\infty B_{\rho_k}(x_k)$.
In particular, $f(\bx)=0$.
Observing that
$s_{k}(x_k)=1$, and
the function $s_{k}$ is differentiable at $x_k$ with
$\nabla s_{k}(x_k)=0$, we have
\begin{align}
\label{T4.1P13}
f(x_k)=\by-y_k\AND
\nabla f(x_k)=-\ang{x_k^*,\cdot}v_k
\qdtx{for all}k\in\N.
\end{align}
Given any $x,x'\in\overline{B}_{\rho_k}(x_k)$, by the mean-value theorem applied to the function $t\mapsto t^{1+\frac1k}$ on $\R_+$, there is a number $\theta\in[0,1]$ such that
\begin{gather*}
s_{k}(x)-s_{k}(x')= -\left(1+\frac1k\right)\rho_k^{-(1+\frac1k)} (\theta\|x-x_k\|
+(1-\theta)\|x'-x_k\|)^{\frac1k} (\|x-x_k\|-\|x'-x_k\|),
\end{gather*}
and consequently,
\begin{align*}
|s_{k}(x)-s_{k}(x')|\le \left(1+\frac1k\right)\rho_k^{-(1+\frac1k)} \max\{\|x-x_k\|,\|x'-x_k\|\}^{\frac1k}\|x-x'\|.
\end{align*}
If $x\ne x'$ and $\|x-x_k\|\le\|x'-x_k\|$, we obtain: \begin{align*}
\frac{|s_{k}(x)-s_{k}(x')|}{\|x-x'\|}\|g_k(x)\|
\le&\left(1+\frac1k\right) \frac{\|x'-x_k\|^{\frac1k}}{\rho_k^{1+\frac1k}} \big(\|y_k-\by\|+\|x_k^*\|\|v_k\|\|x-x_k\|\big)
\\
\le&\left(1+\frac1k\right)\left(\frac{\|y_k-\by\|}{\rho_k}+ \left(\frac{\|x'-x_k\|}{\rho_k}\right)^{1+\frac1k} \|x_k^*\|\|v_k\|\right),
\\
s_{k}(x')\frac{\|g_k(x)-g_k(x')\|}{\|x-x'\|}
\le&\left(1-\left(\frac{\|x'-x_k\|}{\rho_k}\right)^{1+\frac1k}\right) \|x_k^*\|\|v_k\|.
\end{align*}
Combining the above estimates, we get
\begin{align}
\notag
\frac{\|f_{k}(x)-f_{k}(x')\|}{\|x-x'\|}\le& \frac{|s_{k}(x)-s_{k}(x')|}{\|x-x'\|}\|g_k(x)\|+ s_{k}(x')\frac{\|g_k(x)-g_k(x')\|}{\|x-x'\|}
\\
\notag
\le&\left(1+\frac1k\right)\frac{\|y_k-\by\|}{\rho_k} +\frac1k\left(\frac{\|x'-x_k\|}{\rho_k}\right)^{1+\frac1k} \|x_k^*\|\|v_k\|
+\|x_k^*\|\|v_k\|
\\
\label{T4.1P11}
\le&\left(1+\frac1k\right)\left(\frac{\|y_k-\by\|}{\rho_k}+ \|x_k^*\|\|v_k\|\right),
\end{align}
and it follows from \cref{T4.1P1,T4.1P3,T4.1P5,T4.1P11} that
\begin{align*}
\frac{\|f_{k}(x)-f_{k}(x')\|}{\|x-x'\|}\le& \left(1+\frac1k\right)^2\left(\frac{\|y_k-\by\|}{t_k}+ \|x_k^*\|\right)\le\left(1+\frac1k\right)^2\ga\,'.
\end{align*}
Choose numbers $\hat k\in\N$ and $\ga\,''$ such that
$\left(1+1/\hat k\right)^2\ga\,'<\ga\,''<\ga$.
Thus, for any $k>\hat k$, the function $f_{k}$ is Lipschitz continuous on $\overline{B}_{\rho_k}(x_k)$ with modulus $\ga\,''$.
In particular,
given any $x\in{B}_{\rho_k}(x_k)$ with $x\ne x_k$, and taking $x':=x_k+\rho_k\frac{x-x_k}{\|x-x_k\|}\in\overline B_{\rho_k}(x_k)$, we have
$f_k(x')=0$, and consequently, $\|f_k(x)\|\le\ga\,''\|x-x'\|$.
Thus,
\begin{align}
\label{T4.1P12}
\|f_k(x)\|\le\ga\,''(\rho_k-\|x-x_k\|)
\qdtx{for all}x\in B_{\rho_k}(x_k).
\end{align}
Next, we show that the function $f$ is Lipschitz continuous with modulus $\ga\,''$ on $B_{\al_k}(\bx)$ for any $k>\hat k$.
Indeed, let $k>\hat k$, $x,x'\in B_{\al_k}(\bx)$ and $x\ne x'$.
\\
1) If $x,x'\in B_{\rho_i}(x_i)$ for some $i\ge k$, then, as shown above, $\|f(x)-f(x')\|<\ga\,''\|x-x'\|$.
\\
2) If $x\in B_{\rho_i}(x_i)$ and $x'\in B_{\rho_j}(x_j)$ for some $i,j\ge k$ with $i\ne j$, then, thanks to \cref{T4.1P4}, we have $\|x_i-x_j\|\ge\rho_i+\rho_j$, and using \cref{T4.1P12}, we obtain
\begin{align*}
\|f(x)-f(x')\|&\le\|f(x)\|+\|f(x')\|\le \ga\,''(\rho_i+\rho_j-\|x-x_i\|-\|x'-x_j\|)
\\&\le
\ga\,''(\|x_i-x_j\|-\|x-x_i\|-\|x'-x_j\|)
\le
\ga\,''\|x'-x'\|.
\end{align*}
3) If $x\in B_{\rho_i}(x_i)$ for some $i\ge k$, and $x'\notin\cup_{j=k}^\infty B_{\rho_j}(x_j)$, then $f(x')=0$, $\|x'-x_i\|\ge\rho_i$, and using \cref{T4.1P12}, we obtain
\begin{align*}
\|f(x)-f(x')\|
\le
\ga\,''(\|x'-x_i\|-\|x-x_i\|)
\le
\ga\,''\|x'-x'\|.
\end{align*}
4) If $x,x'\notin\cup_{j=k}^\infty B_{\rho_j}(x_j)$, then $f(x)=f(x')=0$.
\\
Thus, in all cases,
$\|f(x)-f(x')\|\le\ga\,''\|x'-x'\|$.
Hence, $f\in{\mathcal F}_{lip}$ and $\lip f(\bx)\le\ga\,''<\ga$.
In view of \cref{T4.1P13,T4.1P5}, for all $k\in\N$, we have $\by=y_k+f(x_k)$ and
$D^*f(x_k)(y_k^*)=-\ang{y_k^*,v_k}x_k^*=-x_k^*$; hence $\by\in(F+f)(x_k)$ and,
by \cref{T2.3},
$0\in D^*_{\eps_k'}(F+f)(x_k,\by)(y_k^*)$,
where
$\eps_k':=(\|\nabla f(x_k)\|+1)\eps_k$.
Thanks to \cref{T4.1P13,T4.1P5,T4.1P1},
$\eps_k'=(\|x_k^*\| \|v_k\|+1)\eps_k\le (2\|x_k^*\|+1)\eps_k<(2\ga+1)\eps_k$.
Thus, ${\rm srg}_{1}^+(F+f)(\bx,\by)=0$.

\subsection{
Firmly calm perturbations: the upper estimate}

We prove \cref{L3.1}\eqref{L3.1(ii)} which provides
a
key ingredient for the proof of the estimate \cref{T3.2-2} in \cref{T3.2}.

Suppose that ${\rm srg}_{2}^+F(\bx,\by)<\ga<+\infty$.
By definitions \cref{srg2+,Del}, there exist sequences 
{$x_k\to\bx$ with $x_k\ne\bx$},
$\eps_k\downarrow0$,
{$y_k\in F(x_k)$},
$y_k^*\in\Sp_{Y^*}$, and
$x_k^*\in D^*_{\eps_k}F(x_k,y_k)(y_k^*)$
such that
\begin{gather}
\label{6.2-0}
\lim_{k\to\infty}\eps_k\norm{x_k^*}=0,
\\
\label{6.2-1}
\ga\,':=\sup_{k\in\N}
\frac{\|y_k-\by\|}{\|x_k-\bx\|}<\ga.
\end{gather}
Denote $t_k:=\|x_k-\bx\|$.
By passing to subsequences, we can ensure
estimate \cref{T4.1P2}.
Set
\begin{align}
\label{6.2-2}
\rho_k:=\min\left\{\frac{1}{k+1},
\frac{\ga\,'}{
{(k+1)}
(1+\|x_k^*\|)}\right\}t_k,
\end{align}
$\al_k:=t_k+\rho_k$ and $\be_k:=t_k-\rho_k$,
and observe that, thanks to \cref{T4.1P2}, that
\begin{align*}
\be_k-\al_{k+1}\ge
\frac{k t_k}{k+1}-\frac{k+3}{k+2}t_{k+1}>&
\left(k-\frac{k+3}{2(k+2)}\right)\frac{t_k}{k+1}
>
\frac{(k-1)t_k}{k+1}\ge0.
\end{align*}
As a consequence, condition \cref{T4.1P4} holds.
For each $k\in\N$, we choose a point $v_k\in Y$
satisfying \cref{T4.1P5},
and then define functions $s_k, g_k, f_k$, and $f$ by \cref{Eqf_k_LipPerm,Eqf_LipPerm}.
Thanks to \cref{T4.1P4}, function $f$ is well defined.
Moreover, it is differentiable at $x_k$, and satisfies \cref{T4.1P13}.
Hence $\by\in(F+f)(x_k)$ and,
by \cref{T2.3},
$0\in D^*_{\eps_k'}(F+f)(x_k,\by)(y_k^*)$,
where
$\eps_k':=(\|\nabla f(x_k)\|+1)\eps_k$.
Thanks to \cref{T4.1P13,T4.1P5,6.2-0},
$\eps_k'=(\|x_k^*\| \|v_k\|+1)\eps_k\le (2\|x_k^*\|+1)\eps_k\to0$ as $k\to\infty$.
Thus, ${\rm srg}_{1}^+(F+f)(\bx,\by)=0$.

As shown in \cref{S4.1}, estimates \cref{T4.1P11} hold true for all $x,x'\in\overline{B}_{\rho_k}(x_k)$, i.e., $f$ is Lipschitz continuous on $\overline{B}_{\rho_k}(x_k)$ with modulus $l_k:=(1+1/k)({\|y_k-\by\|}/{\rho_k}+ \|x_k^*\|\|v_k\|)$.
Since, $f(x)=0$ for all $x\notin\cup_{k=1}^\infty B_{\rho_k}(x_k)$, the function $f$ is Lipschitz continuous around every $x\ne\bx$ near $\bx$.
We
now
show
that $\clm f(\bx)<\ga$.
Choose numbers $\hat k\in\N$ and $\ga\,''$ such that
$\left(1+1/\hat k\right)^2\ga\,'<\ga\,''<\ga$.
Consider a point $x\in X$ with
$0<\|x-\bx\|<t_{\hat k}$ and
$f(x)\ne0$.
Then there is a unique index $k=k(x)\ge\hat k$ such that
$x\in B_{\rho_k}(x_k)$ and $f(x)=-f_k(x)$.
Thus, by \cref{6.2-2,T4.1P5}, we have
\begin{gather*}
\|x-\bx\|>t_k-\rho_k\geq k t_k/(k+1),\\
\|g_k(x)\|\leq \|y_k-\by\|+\rho_k\|x_k^*\|\|v_k\|< \|y_k-\by\|+\ga\,'t_k/{k}.
\end{gather*}
Since $s_k(x)\leq1$ and $s_k(\bx)=0$,
we obtain from \cref{Eqf_k_LipPerm,6.2-1}:
\[\frac{\|f(x)-f(\bx)\|}{\|x-\bx\|}\le\frac{\|g_k(x)\|}{\|x-\bx\|}< \left(1+\frac1k\right)\left(\frac{\|y_k-\by\|}{\|x_k-\bx\|}+ \frac{\ga\,'}{k}\right)\le\left(1+\frac1k\right)^2 \ga\,'<\ga\,''.\]
Hence, $\clm f(\bx)\le\ga\,''<\ga$.

\subsection{
{Firmly calm} semismooth* perturbations: the upper estimate}
\label{S4.3}


We prove \cref{L3.1}\eqref{L3.1(iii)} which provides
a
key ingredient for the proof of the estimate \cref{T3.2-3} in \cref{T3.2}.

Suppose that ${\rm srg}_{4}^+F(\bx,\by)<\ga<+\infty$.
By definitions \cref{srg4+,Ups}, there exist
sequences
$x_k\to\bx$ with $x_k\ne\bx$,
$\eps_k\downarrow0$,
{$y_k\in F(x_k)$},
$y_k^*\in\Sp_{Y^*}$, and
$x_k^*\in D^*_{\eps_k}F(x_k,y_k)(y_k^*)$
such that conditions \cref{6.2-0,6.2-1} are satisfied, and
\begin{align}
\label{T6.2P1}
\dfrac{|\ang{x_k^*,x_k-\bx}-\ang{y_k^*,y_k-\by}|} {\|x_k^*,y_k^*\|\|(x_k-\bx,y_k-\by)\|}\leq\eps_k.
\end{align}
For every $k\in\N$,
set $t_k:=\norm{x_k-\bx}$ and $u_k:=(x_k-\bx)/t_k\in\Sp_X$, and
choose points $u_k^*\in\Sp_{X^*}$ and $v_k\in Y$ such that \begin{align}
\label{T6.2P3}
\ang{u_k^*,u_k}=
1,\quad
\ang{y_k^*,v_k}=1,\quad
\norm{v_k}<2.
\end{align}
Set
\begin{align}
\label{T6.2P6}
\alpha_k(x):=\ang{u_k^*,x},\quad
r_k(x):=x-\alpha_k(x)u_k,\quad
\hat x_k^*:=x_k^*-\ang{x_k^*,u_k}u_k^*.
\end{align}
Observe that
\begin{gather}
\label{T6.2P7}
\al_k(u_k)=1,
\quad
r_k(u_k)=0,
\quad
\ang{\hat x_k^*,u_k}=0,
\quad
\ang{\hat x_k^*,r_k(x)}=\ang{\hat x_k^*,x}.
\end{gather}

{\em Case 1: no element of the sequence $(u_k)$ appears infinitely many times.}
Without changing the notation, we now construct inductively a subsequence of $(u_k)$ (and the corresponding to it other involved sequences).
For each $k=1,\ldots$, we do the following.
If $u_k=\lim_{j\to\infty}u_j$, remove $u_{k}$ and all its finitely many copies from the sequence.
Then $s_{k}:=\limsup_{j\to\infty}\|u_{j}-u_k\|>0$.
Choose a subsequence of $(u_j)_{j=k+1}^{\infty}$ such that
$\|u_{j}-u_k\|>s_k/2$ for all $j>k.$
\if{
By construction,
\[\rho_k:=\inf\{\|u_j-u_k\|\mid j\ne k\}\geq\min_{j\le k}s_j/2.\]
Hence, the sequence $(\rho_k)$ is nonincreasing, and
$\|u_k-u_i\|\geq \max\{\rho_k,\rho_i\}$ for all $k\ne i$.
}\fi
Defining
\[\rho_k:=\inf\{\|u_j-u_k\|\mid j\ne k\}/2,\]
we have $\rho_k\geq \min_{j\le k}s_j>0$ for all $k\in\N$, and $\|u_k-u_i\|\geq2\max\{\rho_k,\rho_i\}$ for all $k\ne i$,
and consequently,
\begin{align}
\label{T6.2P5}
B_{\rho_k}(u_k)\cap B_{\rho_i}(u_i)=\es
\qdtx{for all}k\ne i.
\end{align}

{Let $\ga''\in(\ga',\ga)$.}
For each $k\in\N$, choose a number $\tau_k\in(0,\rho_k/2)$ such that $\tau_k{\|x_k^*\|}<(\ga''-\ga\,')/4$, and
define
\begin{align}
\label{T6.2P8-1}
s_k(x)&:=
\begin{cases}
\max\left\{1-\left(\dfrac{\norm{r_k(x)}} {\tau_k\alpha_k(x)}\right)^2,0\right\}&\mbox{if $\alpha_k(x)>0$},\\
   0&\mbox{otherwise},
\end{cases}
\\
\label{T6.2P8-2}
A_k(x)&:=
\alpha_k(x)(y_k-\by)/t_k+\ang{\hat x^*_k,x}v_k,
\\
\label{T6.2P8-3}
\hat A_k(x)&:=s_k(x)A_k(x).
\end{align}
Observe that $s_k(x)=0$ and $\hat A_k(x)=0$ for all $x\in X$ with $\norm{r_k(x)}\ge\tau_k\alpha_k(x)$.
If $\norm{r_k(x)}<\tau_k\alpha_k(x)$, then
by \cref{T6.2P6}, $x\ne0$ and
\begin{align*}
\big\|x-\norm{x}u_k\big\|= \norm{r_k(x)+(\alpha_k(x)-\norm{x})u_k}\le& \|r_k(x)\|+\norm{x}-\alpha_k(x)
\\\le&
2\|r_k(x)\|<2\tau_k\alpha_k(x)<\rho_k\|x\|;
\end{align*}
hence, $x/\norm{x}\in B_{\rho_k}(u_k)$.
It follows from \cref{T6.2P5} that
for every $x\in X$ there is at most one $k\in\N$ with
$\hat A_k(x)\ne0$.
Thus, the function
\begin{align}
\label{T6.2P9}
f(x):=-\sum_{k=1}^\infty \hat A_k(x-\bx),\quad x\in X,
\end{align}
is well defined.
The function $\hat A_k(x)$ is Lipschitz continuous around any point $x\ne0$; hence, $f$ is Lipschitz continuous around any point $x\ne\bx$.
Further, $s_k$ is positively homogenous at $0$, while
$A_k$ is
linear.
Therefore $\hat A_k$ is positively homogenous at $0$, and $f$
is positively homogenous at $\bx$ and, by \cref{CorSS}, \ssstar at $\bx$.
If $\hat A_k(x)\ne0$, then $\norm{r_k(x)}<\tau_k\alpha_k(x)$ and,
thanks to \cref{T6.2P3,T6.2P7,T6.2P6,6.2-1},
\begin{align*}
\notag
\|\hat A_k(x)\|\le&\|A_k(x)\|\le\alpha_k(x)\norm{y_k-\by}/t_k+ 2|\ang{\hat x^*_k,x}|\leq
\alpha_k(x)\ga'+2|\ang{\hat x^*_k,r_k(x)}|
\\\le&
(\ga'+2\norm{\hat x^*_k}\tau_k)\alpha_k(x)\le
(\ga'+4\norm{x^*_k}\tau_k)\|x\|<
(\ga'+(\ga''-\ga'))\|x\|=\ga''\|x\|,
\end{align*}
and consequently,
$\clm\hat A_k(0)\leq\ga''$.
Hence, $\clm f(\bx)\leq\ga''<\ga$.
Thus, $f\in {\mathcal F}_{{\ovclm}+ss^*}$.

In view of \cref{T6.2P8-1,T6.2P7},
$s_k(x_k-\bx)=1$.
Further,
$s_k$ is Fr\'echet differentiable at $x_k-\bx$ with derivative
$\nabla s_k(x_k-\bx)=0$.
Hence, using \cref{T6.2P7} again, we obtain:
\begin{gather}
\label{T6.2P15}
f(x_k)=-\hat A_k(x_k-\bx)=\by-y_k,
\\
\label{T6.2P16}
\nabla f(x_k)=-\nabla \hat A_k(x_k-\bx)= -\nabla A_k(x_k-\bx)= -\ang{u_k^*,\cdot}(y_k-\by)/t_k-
\ang{\hat x_k^*,\cdot}v_k.
\end{gather}
Thanks to \cref{T6.2P3} and \cref{T6.2P6}, the latter equality yields
\begin{align*}
D^*f(x_k)(y_k^*)=
-\ang{y_k^*,y_k-\by}u_k^*/t_k-
\hat x_k^*\ang{y_k^*,v_k}=
-x_k^*-(\ang{y_k^*,y_k-\by}-\ang{x_k^*,x_k-\bx})u_k^*/t_k.
\end{align*}
By \cref{T6.2P15},
$\by\in(F+f)(x_k)$, and, by \cref{T2.3},
\begin{align*}
\hat u_k^*:= -(\ang{x_k^*,x_k-\bx}-\ang{y_k^*,y_k-\by})u_k^*/t_k\in D^*_{\eps_k'}(F+f)(x_k,\by)(y_k^*),
\end{align*}
where $\eps_k':=(\|\nabla f(x_k)\|+1)\eps_k$.
In view of \cref{6.2-1,T6.2P1,T6.2P6,T6.2P16}, we have
\begin{align}
\notag
\|\hat u_k^*\|&= |\ang{x_k^*,x_k-\bx}-\ang{y_k^*,y_k-\by}|/t_k
\\&=
\notag
\dfrac{|\ang{x_k^*,x_k-\bx}-\ang{y_k^*,y_k-\by}|} {\|(x_k^*,y_k^*)\|
\|(x_k-\bx,y_k-\by)\|}\cdot
\frac{\norm{x_k-\bx}+\norm{y_k-\by}}{t_k}\cdot
\max\{\|x_k^*\|,1\}
\\&\le
\label{T6.2P17}
\eps_k(1+\gamma)\max\{\|x_k^*\|,1\},
\\
\label{T6.2P17-2}
\eps_k'&<(\ga+4{\|x_k^*\|}+1)\eps_k.
\end{align}
It follows from \cref{6.2-0} that $\hat u_k^*\to0$ and $\eps_k'\to0$ as $k\to\infty$.
Thus, ${\rm srg}_{1}^+(F+f)(\bx,\by)=0$.

{\em Case 2: There is an element $u$ of the sequence $(u_k)$ which appears infinitely many times.}
From now on, we consider the stationary subsequence $(u)$ of the sequence $(u_k)$, i.e. without changing the notation, we assume that $(x_k-\bx)/t_k=u\in\Sp_{X}$ for all $k\in\N$.
Thus, subscript $k$ can be dropped in many of the formulas in \cref{T6.2P3,T6.2P6,T6.2P7}:
\begin{gather}
\label{T6.2P20}
\ang{u^*,u}
=\|u^*\|
=1,\;\;
\alpha(x):=\ang{u^*,x},\;\;
r(x):=x-\alpha(x)u,
\;\;
\hat x_k^*:=x_k^*-\ang{x_k^*,u}u^*,
\\
\label{T6.2P21}
\al(u)=1,
\quad
r(u)=0,
\quad
\ang{\hat x_k^*,u}=0,
\quad
\ang{\hat x_k^*,r(x)}=\ang{\hat x_k^*,x}.
\end{gather}
By passing to a subsequence again, we can also assume that the sequence $(t_k)$ satisfies
\begin{gather}
\label{T6.2P22}
t_{k}<e^{-k}t_{k-1},
\qdtx{or equivalently,}\ln(t_{k-1}/t_k)>k
\quad(k\ge2).
\end{gather}
{Let $\ga''\in(\ga',\ga)$.}
For each $k\in\N$, choose a number $\tau_k\in(0,1)$ such that $\tau_k{\|x_k^*\|}<(\ga''-\ga\,')/4$, and consider
mappings $s_k$, $A_k$ and $\hat A_k:X\to Y$ ($k\ge1$) defined by \cref{T6.2P8-1,T6.2P8-2,T6.2P8-3}.
Note that in the considered case $\al_k$ and $r_k$ do not depend on $k$; hence, the first two mappings take a slightly simplified form:
\begin{align}
\label{T6.2P23.s_k}
s_k(x):=&
\begin{cases}
\max\left\{1-\left(\dfrac{\norm{r(x)}} {\tau_k\alpha(x)}\right)^2,0\right\}&\mbox{if $\alpha(x)>0$},\\
   0&\mbox{otherwise},
\end{cases}
\\
\label{T6.2P23}
A_k(x):=&\alpha(x)(y_k-\by)/t_k+
\ang{\hat x^*_k,x}v_k.
\end{align}
Next we define mappings $T_k:\{x\in X\mid \alpha(x)>0\}\to Y$ ($k\ge2$) and $g,f:X\to Y$:
\begin{align}
\label{T6.2P23.2}
T_k(x):=&\hat A_k(x)+\frac{\ln(\alpha(x)/t_{k})} {\ln(t_{k-1}/t_k)}(\hat A_{k-1}(x)-\hat A_k(x)),
\\
\label{T6.2P31}
g(x):=&
\begin{cases}
\hat A_1(x)&\mbox{if $\alpha(x)\geq t_1$,}\\
T_k(x)&\mbox{if $\alpha(x)\in[t_k,t_{k-1})$ for some $k\geq 2$,}\\
0&\mbox{if $\alpha(x)\leq 0$},
\end{cases}
\\
\label{T6.2P32}
f(x):=&-g(x-\bx).
\end{align}
We will now show that $g$ is Lipschitz continuous around every point in $U:=\{x\in X\mid 0<\norm{x}< t_1\}$, and \ssstar and calm at $0$ with $\clm g(0)\leq\ga''$.

The mappings $T_k$ are Lipschitz continuous around every
$x\in X$ with $\al(x)>0$
as compositions of mappings having this property, and satisfy
\begin{gather}
\label{T6.2P24}
T_k(x)=T_{k+1}(x)=\hat A_k(x)\qdtx{whenever}\alpha(x)=t_k.
\end{gather}
Hence, $g$ is Lipschitz continuous around every point $x\in U$ with $\al(x)>0$.
By \cref{T6.2P23.s_k},
the functions $s_k$ vanish on the open set $U_0:=\{x\mid\alpha(x)<\|r(x)\|\}$.
By \cref{T6.2P31,T6.2P23.2,T6.2P8-3},
so does $g$.
If $\alpha(x)=\|r(x)\|=0$, then, by \cref{T6.2P20}, $x=0$.
Hence, $\al(x)>0$ for all $x\in U\setminus U_0$, and consequently, $g$ is Lipschitz continuous around every point in $U$.

As shown in the preceding case,
$\hat A_k$ is positively homogeneous at $0$, and
\begin{align}\label{EqClmAk}
\|\hat A_k(x)\|< \ga''\| x\|\qdtx{for all} x\ne0.
\end{align}
Let $x\in U\setminus U_0$.
Then $0<\al(x)<t_1$,
and we can find a unique integer $\hat k\ge2$ such that $\alpha(x)\in[t_{\hat k},t_{\hat k-1})$.
Thus, $\xi:=\ln(\alpha(x)/t_{\hat k})/\ln(t_{\hat k-1}/t_{\hat k})\in[0,1)$, and from \Cref{EqClmAk,T6.2P31,T6.2P23.2} we obtain:
\[\|g(x)\|=\|(1-\xi)\hat A_{\hat k}(x)+\xi \hat A_{\hat k-1}(x)\|<(1-\xi)\ga''\|x\|+\xi\ga''\|x\|=\ga''\|x\|,\]
showing $\clm g(0)\leq \ga''$.

The mapping $A_k$ is linear and continuous, while the function $s_k$, in view of \cref{T6.2P23.s_k,T6.2P20}, satisfies $s_k(x\pm tx)=s_k(x)$ for all $t\in(0,1)$.
Hence, $\hat A_k'(x;\pm x)=\pm\hat A_k(x)$.
It is easy to check from \cref{T6.2P20} that $\big(\ln(\al(\cdot)/t_k)\big)'(x;\pm x)=\pm1$.
Now it follows from \cref{T6.2P23.2} that
\begin{gather}
\label{T6.2P24.T_k'}
T_k'(x;\pm x)=\pm\left(T_k(x)+\frac{\hat A_{k-1}(x)-\hat A_{k}(x)}{\ln(t_{k-1}/t_{k})}\right).
\end{gather}
If $\alpha(x)\in(t_{\hat k},t_{\hat k-1})$, then, by \cref{T6.2P31}, $g$ coincides with $T_{\hat k}$ around $x$, yielding $g'(x;\pm x)=T_{\hat k}'(x;\pm x)$.
Taking into account \cref{T6.2P24.T_k',T6.2P22,EqClmAk}, we obtain:
\begin{gather}
\label{T6.2P25}
\|g(x)-g'(x;x)\| =
\frac{\|\hat A_{\hat k-1}(x)-\hat A_{k}(\hat x)\|}
{\ln(t_{\hat k-1}/t_{\hat k})}<\frac{2\ga\|x\|}{\hat k},
\end{gather}
and similarly, $\|g(x)+g'(x;-x)\|<2\ga\|x\|/\hat k$.
If $\alpha(x)=t_{\hat k}$ $(>0)$, then we have $g'(x,x)=T_{\hat k}(x;x)$, yielding \cref{T6.2P25}, and $g'(x;-x)=T'_{\hat k+1}(x;-x)$, yielding
\[ \|g(x)+ g'(x;- x)\| =\frac{\|\hat A_{\hat k}(x)-\hat A_{\hat k+1}(x)\|}{\ln(t_{\hat k}/t_{\hat k+1})}<\frac{2\ga\|x\|}{\hat k+1}.\]
Since $\alpha(x)\to 0$ and $\hat k\to\infty$ as
$U\setminus U_0\ni x\to0$, it follows from \cref{LemSS} that $g$ is \ssstar at $0$.
We conclude that the function $f$ defined by \cref{T6.2P32} belongs to
${\mathcal F}_{{\ovclm}+ss^*}$, and
$\clm f(\bx)\leq\ga''<\ga$.

Now, consider the functions $h_k:=g-\hat A_k$ ($k\geq 2$).
We claim that
$\clm h_k(t_ku)<c_k:=2\ga/k$.
By \cref{T6.2P21,T6.2P31,T6.2P24}, $\al(t_ku)=t_k$, $g(t_ku)=T_k(t_ku)=\hat A_k(t_ku)$, and consequently, $h_k(t_ku)=0$.
In view of \cref{T6.2P31,T6.2P23.2}, the function $h_k$ admits the following representation near $t_ku$:
\[h_k(x)=
\begin{cases}h_k^1(x)&\mbox{if $\alpha(x)\in[t_k,t_{k-1})$},\\
h_k^2(x)&\mbox{if $\alpha(x)\in[t_{k+1},t_k)$,}
\end{cases}\]
where
\[h_k^1(x):=\frac{\ln(\alpha(x)/t_k)}{\ln(t_{k-1}/t_k)} (\hat A_{k-1}(x)-\hat A_k(x)),\quad
h_k^2(x):=\frac{\ln(\alpha(x)/t_k)}{\ln(t_{k+1}/t_k)} (\hat A_{k+1}(x)-\hat A_k(x)).\]
By \cref{T6.2P21,T6.2P23.s_k},
$s_k(t_ku)=1$, and $s_k$ is Fr\'echet differentiable at $t_ku$ with $\nabla s_k(t_ku)=0$.
Moreover, $\ln(\alpha(t_ku)/t_k)=0$.
Thus, in view of \cref{T6.2P20,T6.2P21,T6.2P8-3}, $h_k^1$ is  differentiable at $t_k u$, and
its derivative amounts to
\[\nabla h_k^1(t_ku)= \frac{\alpha(\cdot)}{\ln(t_{k-1}/t_k)} (A_{k-1}(u)-A_k(u))=\frac{\alpha(\cdot)}{\ln(t_{k-1}/t_k)} \left(\frac{y_{k-1}-\by}{t_{k-1}}-\frac{y_k-\by}{t_k}\right).\]
Hence, in view of \cref{6.2-1,T6.2P22}, we have
$\|\nabla h_k^1(t_ku)\|<2\ga/k=c_k.$
The same arguments yield
$\|\nabla h_k^2(t_ku)\|<2\ga/(k+1)<c_k$, and consequently,
\begin{align}
\label{T6.2P40}
\clm h_k(t_ku)=
\max\{\|\nabla h_k^1(t_ku)\|,
\|\nabla h_k^2(t_ku)\|\}<c_k.
\end{align}

Next, for all $x\in X$ and $k\in\N$, set
${C}_k(x):=\hat A_k(x-\bx)$.
Then
\begin{align}
\label{T6.2P41}
{C}_k(x_k)=s(t_ku)A_k(t_ku)=t_kA_k(u)=y_k-\by.
\end{align}
and  ${C}_k$ is Fr\'echet differentiable at $x_k$ with
\begin{align}
\label{T6.2P42}
\nabla {C}_k(x_k)=\nabla A_k(t_ku)= \ang{u^*,\cdot}(y_k-\by)/t_k+\ang{\hat x_k^*,\cdot}v_k.
\end{align}
Thus, in view of \cref{T6.2P20},
\[D^*{C}_k(x_k)(y_k^*)=\ang{y_k^*,y_k-\by}u^*/t_k+\hat x_k^*= x_k^*-\hat u_k^*.\]
where
$\hat u_k^*:=\left(\ang{x_k^*,u}-\ang{y_k^*,y_k-\by}\right)u^*/t_k$.
By \cref{T6.2P41},
$\by\in(F-{C}_k)(x_k)$ and, by \cref{T2.3},
\begin{align}
\label{T6.2P44}
\hat u_k^*\in D^*_{\eps_k'}(F-{C}_k)(x_k,\by)(y_k^*),
\end{align}
where $\eps_k':=(\|\nabla {C}_k(x_k)\|+1)\eps_k$.
In view of \cref{6.2-1,T6.2P42,T6.2P1,T6.2P20}, estimates \cref{T6.2P17,T6.2P17-2} hold true.
It follows from \cref{6.2-0} that $\hat u_k^*\to0$ and $\eps_k'\to0$ as $k\to\infty$.
Observe that $f(x)=-{C}_k(x)-h_k(x-\bx)$ for all
{$x\in X$}.
By \cref{T6.2P40,T6.2P41}, we have
\begin{align}
\label{T6.2P46}
f(x_k)=-{C}_k(x_k)=\by-y_k\AND
\clm (f+{C}_k)(x_k)=
\clm h_k(t_ku)<c_k.
\end{align}
In view of \cref{T6.2P44,T6.2P46},
it follows from \cref{LemEpsCoder} that
$\hat u_k^*\in D^*_{\de_k}(F+f)(x_k,\by)(y_k^*)$,
where $\de_k:=(\eps_k'+c_k)/(1-c_k)\to0$ as $k\to\infty$.
Hence,
{${\rm srg}_{1}^+(F+f)(\bx,\by)=0$}.

\subsection{The lower estimates}

We prove the lower estimates in \cref{T3.2-4,T3.2-6,T3.2-5,T3.2-7}.
Combined with the first part of \cref{T3.2}, they prove the second part of the theorem.

Suppose that $F+f$ is not
subregular at $(\bx,\by)$ {for some function $f\in{\mathcal F}_{\ovclm}$}.
It follows from \cref{T3.1}(ii)
that
${\rm srg}_{1}(F+f)(\bx,\by)=0$.
By definitions \cref{srg1,Del}, there exist sequences
{$x_k\not=\bx$}, $y_k\in F(x_k)$,
$y^*_k\in\Sp_{Y^*}$ and
$x_k^*\in D^*(F+f)(x_k,y_k+f(x_k))(y_k^*)$
such that
{$f$ is Lipschitz continuous near $x_k$, and}
\begin{gather}
\label{Tr1}
\al_k:=\max\left\{\|x_k-\bx\|,
\|x_k^*\|, \frac{\|y_k+f(x_k)-\by\|}{\|x_k-\bx\|}\right\}\to0.
\end{gather}
For each $k\in\N$, set
$t_k:=\|x_k-\bx\|$, and choose a positive number
\begin{gather}
\label{Tr2}
\eps_k<\min\left\{\frac{\al_kt_k}{\al_k+1},\frac12\right\}.
\end{gather}
Then $0<\eps_k<t_k\le\al_k$.
By \cref{T2.2}(ii),
there exist
{$x_{1k},x_{2k}\in B_{\eps_k}(x_k)$},
$y_{1k}\in F(x_{1k})\cap B_{\eps_k}(y_k)$,
$y_{1k}^*,y_{2k}^*\in B_{\eps_k}(y_k^*)$,
$x_{1k}^*\in D^*F(x_{1k},y_{1k})(y_{1k}^*)$ and
$x_{2k}^*\in D^*f(x_{2k})(y_{2k}^*)$ such that
\begin{gather}
\label{Tr3}
\|x_{1k}^*+x_{2k}^*-x_k^*\|<\eps_k.
\end{gather}
Observe that
\begin{gather}
\label{Tr4}
\min\{\|y_{1k}^*\|,\|y_{2k}^*\|\}>1-\eps_k>\frac12,
\\
\label{Tr5}
\min\{\norm{x_{1k}-\bx},\norm{x_{2k}-\bx}\}> t_k-\eps_k> \frac{t_k}{\al_k+1}{>0}.
\end{gather}
\if{
\HG{29/01/2022. Here we need that $f$ belongs to the class $\mathcal{F}_{\ovclm+}$
Otherwise we cannot apply \cref{T2.2}(ii).}
\AK{8/02/22.
I see.
Is `+' in `$\ovclm+$' a typo?}
}\fi
Let $\clm f(\bx)<\ga<+\infty$, and
choose a number $\ga\,'\in(\clm f(\bx),\ga)$.
In view of \cref{Tr1,Tr2,Tr5},
we have for all sufficiently large $k\in\N$:
\begin{align}
\notag
\frac{\|y_{1k}-\by\|}{\|x_{1k}-\bx\|}&<
\frac{\|y_{k}-\by\|+\eps_k}{(\al_k+1)\iv t_k}\le
\frac{\|f(x_k)\|+\|y_k+f(x_k)-\by\|+\eps_k} {(\al_k+1)\iv t_k}
\\&
\label{Tr6}
<(\al_k+1)\left(\ga\,'+\al_k\right)+\al_k= \ga\,'+\al_k(\ga\,'+\al_k+1)<\ga.
\end{align}
In particular, $y_{1k}\to\by$ as $k\to\infty$.
Letting $\ga\downarrow\clm f(\bx)$ and observing that the function {$f\in\mathcal{F}_{\ovclm}$} is arbitrary,
inequality \cref{Tr6} implies
$\Rad{SR}{\ovclm}
\ge{\rm srg}_{2}F(\bx,\by)$.
Combined with the established above inequality \cref{T3.2-2} and \cref{P3.1}(v), this
proves \cref{T3.2-5}.

We now start imposing additional assumptions on the function $f$ and the other parameters introduced above.
{Recall that $\mathcal{F}_{lip}\cup\mathcal{F}_{\ovclm+ss^*}\subset \mathcal{F}_{\ovclm}$.}
The presentation splits into considering two cases.

{\em Case 1: $f\in\mathcal{F}_{lip}$.}
Let $\lip f(\bx)<\ga<+\infty$ and $\ga\,'\in(\lip f(\bx),\ga)$.
In view of \cref{Tr1,Tr3},
we have for all sufficiently large $k\in\N$:
\begin{align}
\notag
\norm{x_{1k}^*}&\leq \norm{x_{2k}^*}+ \norm{x_{k}^*}+\|x_{1k}^*+x_{2k}^*-x_k^*\|
<\gamma\,'\norm{y_{2k}^*}+\al_{k}+\eps_k
\\&
\label{Tr8}
<\gamma\,'(1+\eps_k)+\al_{k}+\eps_k<
\gamma\,'+(\ga\,'+2)\al_k<\ga.
\end{align}
Letting $\ga\downarrow\lip f(\bx)$ and observing that the function $f\in\mathcal{F}_{lip}$ is arbitrary, inequalities \cref{Tr6,Tr8} imply
$\Rad{SR}{lip}
\ge{\rm srg}_{1}F(\bx,\by)$.
Combined with the established above inequality \cref{T3.2-1}, this proves
\cref{T3.2-4}.

{\em Case 2: {$f\in\mathcal{F}_{\ovclm+ss^*}$}.}
Without loss of generality,
for each $k\in\N$, there is a $\delta_k>0$ such that $f$ is Lipschitz continuous on $B_{\delta_k}(x_k)$ with some constant $\eta_k>0$.
Set
\begin{gather}
\label{Tr9}
\eps_k:= \min\left\{\frac{\al_kt_k}{(\al_k+1)(\eta_k+1)},\de_k,\frac12\right\},
\end{gather}
and observe that $\eps_k$ satisfies \cref{Tr2}.
Thus, we can assume that the sequences defined above satisfy conditions \cref{Tr3,Tr4,Tr5,Tr6}.
Since $f$ is \ssstar at $\bx$, we have
\begin{align}
\label{Tr10}
\lim_{k\to\infty} \frac{\ang{x_{2k}^*,x_{2k}-\bx}-\ang{y_{2k}^*,f(x_{2k})}} {\|(x_{2k}^*,y_{2k}^*)\|\|(x_{2k}-\bx,f(x_{2k}))\|}=0,
\end{align}
where the denominator is nonzero for all
$k\in\N$.
We are going to show that
\begin{align}
\label{Tr11}
\lim_{k\to\infty} \frac{\ang{x_{1k}^*,x_{1k}-\bx}-\ang{y_{1k}^*,y_{1k}-\by}} {\|(x_{1k}^*,y_{1k}^*)\|\|(x_{1k}-\bx,y_{1k}-\by)\|}=0.
\end{align}
To transform \cref{Tr10} into \cref{Tr11}, we next compare the corresponding components of the two expressions one by one.
In view of \cref{Tr3,Tr4,Tr1},
we have for all $k\in\N$:
\begin{align}
\notag
\left\vert\frac{\|(x_{1k}^*,y_{1k}^*)\|} {\|(x_{2k}^*,y_{2k}^*)\|}-1\right\vert&= \frac{\vert\|(x_{2k}^*,y_{2k}^*)\|- \|(x_{1k}^*,y_{1k}^*)\|\vert}{\|(x_{2k}^*,y_{2k}^*)\|}\le
\frac{\|(x_{2k}^*+x_{1k}^*,y_{2k}^*-y_{1k}^*)\|} {\|y_{2k}^*\|}
\\&<
\label{Tr12}
2\max\{\|x_k^*\|+\eps_k, 2\eps_k\}<4\al_k.
\end{align}
Thanks to the Lipschitz continuity of $f$ near $x_k$ and its calmness near $\bx$, and in view of \cref{Tr1,Tr2}, we have for all sufficiently large $k\in\N$:
\begin{align}
\notag
\|f(x_{2k})+y_{1k}-\by\|&\le\|f(x_{2k})-f(x_{k})\|+ \|f(x_{k})+y_{k}-\by\|+\|y_{1k}-y_{k}\|
\\&
\label{Tr13}
<\eta_k\eps_k+\al_kt_k+\eps_k
=(\eta_k+1)\eps_k+\al_kt_k,
\\
\label{Tr14}
\|f(x_{2k})\|&\le\ga\|x_{2k}-\bx\|<\ga(t_k+\eps_k)< \ga\, t_k(1+\al_k(\al_k+1)\iv)<2\ga\, t_k,
\\
\label{Tr15}
\|x_{2k}^*\|&\le\eta_k\|y_{2k}^*\|<\eta_k(1+\eps_k) <2\eta_k.
\end{align}
As a consequence,
employing \cref{Tr5,Tr13,Tr9,Tr2},
we obtain:
\begin{align}
\notag
\left\vert\frac{\norm{(x_{1k}-\bx,y_{1k}-\by)}} {\norm{(x_{2k}-\bx,f(x_{2k}))}}-1\right\vert&= \frac{|\norm{(x_{2k}-\bx,f(x_{2k}))}- \norm{(x_{1k}-\bx,y_{1k}-\by)}|} {\norm{(x_{2k}-\bx,f(x_{2k}))}}
\\&
\notag
\le \frac{\norm{(x_{2k}-x_{1k},f(x_{2k})+y_{1k}-\by)}} {\norm{x_{2k}-\bx}}
\\&
\label{Tr16}
<\frac{(\eta_k+1)\eps_k+\al_kt_k+\eps_k} {(\al_k+1)\iv t_k}< \al_k(\al_k+3).
\end{align}
Similarly, employing \cref{Tr3,Tr4,Tr5,Tr13,Tr14,Tr15}, we obtain:
\begin{align}
\notag
\frac{|\ang{x_{2k}^*,x_{2k}-\bx}+\ang{x_{1k}^*,x_{1k}-\bx}|} {\|(x_{1k}^*,y_{1k}^*)\|\|(x_{1k}-\bx,y_{1k}-\by)\|}&\le
\frac{\|x_{1k}^*+x_{2k}^*\|\|x_{1k}-\bx\|+ \|x_{2k}^*\|\|x_{2k}-x_{1k}\|} {\|y_{1k}^*\|\|x_{1k}-\bx\|}
\\
\label{Tr17}
&<2\left(\|x_{k}^*\|+\eps_k+ \frac{4\eta_k\eps_k}{(\al_k+1)\iv t_k}\right)<12\al_k,
\\
\notag
\frac{|\ang{y_{2k}^*,f(x_{2k})}+\ang{y_{1k}^*,y_{1k}-\by}|} {\|(x_{1k}^*,y_{1k}^*)\|\|(x_{1k}-\bx,y_{1k}-\by)\|}&\le
\frac{\|y_{2k}^*-y_{1k}^*\|\|f(x_{2k})\|+ \|y_{1k}^*\|\|f(x_{2k})+y_{1k}-\by\|} {\|y_{1k}^*\|\|x_{1k}-\bx\|}
\\
\notag
&<\frac{\al_k+1}{t_k} \big(8\ga\eps_kt_k+(\eta_k+1)\eps_k+\al_kt_k\big)
\\
\notag
&=
(\al_k+1) \big(8\ga\eps_k+(\eta_k+1)\eps_k/t_k+\al_k\big)
\\
\label{Tr18}
&<
\al_k\big((\al_k+1)(8\ga+1)+1\big).
\end{align}
Conditions \cref{Tr12,Tr16,Tr17,Tr18,Tr1} yield
\begin{gather}
\label{Tr19}
\lim_{k\to\infty} \frac{\|(x_{1k}^*,y_{1k}^*)\|}{\|(x_{2k}^*,y_{2k}^*)\|} =\lim_{k\to\infty}\frac{\norm{(x_{1k}-\bx,y_{1k}-\by)}} {\norm{(x_{2k}-\bx,f(x_{2k}))}} =1,
\\
\label{Tr20}
\lim_{k\to\infty} \frac{|\ang{x_{2k}^*,x_{2k}-\bx}+\ang{x_{1k}^*,x_{1k}-\bx}|} {\|(x_{1k}^*,y_{1k}^*)\|\|(x_{1k}-\bx,y_{1k}-\by)\|}=
\lim_{k\to\infty} \frac{|\ang{y_{2k}^*,f(x_{2k})}+\ang{y_{1k}^*,y_{1k}-\by}|} {\|(x_{1k}^*,y_{1k}^*)\|\|(x_{1k}-\bx,y_{1k}-\by)\|}=0.
\end{gather}
Combining \cref{Tr19,Tr20} with \cref{Tr10}, we show \cref{Tr11}.
Letting $\ga\downarrow\clm f(\bx)$ and observing that the function {$f\in\mathcal{F}_{\ovclm+ss^*}$} is arbitrary, conditions \cref{Tr6,Tr11}, and definition \cref{srg4} imply
$\Rad{SR}{\ovclm+ss^*}
\ge{\rm srg}_{4}F(\bx,\by)$.
Combined with the established above inequality \cref{T3.2-3}, this proves
\cref{T3.2-7}.

If $f\in\mathcal{F}_{lip+ss^*}$ and $\ga>\lip f(\bx)$, then, in addition to \cref{Tr6,Tr11}, we also have condition \cref{Tr8}.
Letting $\ga\downarrow\lip f(\bx)$, conditions \cref{Tr6,Tr8,Tr11}, and definition \cref{srg3} imply \cref{T3.2-6}.

\subsection{The upper estimate for the radius of strong subregularity}

We prove \cref{L3.2} which provides
a
key ingredient for the proof of \cref{T3.3}.
The proof below is an adaptation of the one in \cref{S4.3} for \cref{L3.1}\eqref{L3.1(iii)}.
As in \cref{S4.3}, we construct a function $f$ such that $F+f$ is not strongly regular, but we make this conclusion by the definition of strong regularity without appealing to \cref{T2.3,LemEpsCoder}.
Hence, we  skip all the estimates for coderivatives involved in \cref{S4.3}.
This also explains why $\gph F$ in the statement of \cref{L3.2}
is not required to be closed.

Suppose that $\srg<\ga<+\infty$.
By
\cref{ssrg}, there exist
sequences
$x_k\to\bx$ with $x_k\ne\bx$,
and $y_k\in F(x_k)$
such that condition \cref{6.2-1} is satisfied.
For every $k\in\N$,
set $t_k:=\norm{x_k-\bx}$, $u_k:=(x_k-\bx)/t_k\in\Sp_X$ and $v_k:=(y_k-\by)/t_k\in\ga\,'\B_Y$, and
choose a vector
$u_k^*\in X^*$ satisfying
\begin{gather}
\label{S4.5-1}
\ang{u_k^*,u_k}=\|u_k^*\|=1.
\end{gather}
Set
\begin{align}
\label{S4.5-3}
\alpha_k(x):=\ang{u_k^*,x},\quad
r_k(x):=x-\alpha_k(x)u_k,\quad
A_k(x):=
\alpha_k(x)v_k.
\end{align}
Observe that
\begin{gather}
\label{S4.5-4}
\al_k(u_k)=1,
\quad
r_k(u_k)=0,
\quad
A_k(u_k)=v_k.
\end{gather}

{\em Case 1: no element of the sequence $(u_k)$ appears infinitely many times.}
As shown in \cref{S4.3}, there exists a subsequence of $(u_k)$ (we keep the original notation) such that $\rho_k:=\inf\{\|u_j-u_k\|\mid j\ne k\}>0$ for all $k\in\N$, and condition \cref{T6.2P5} is satisfied.
For each $k\in\N$, set $\tau_k:=\rho_k/2$ and
consider
mappings $s_k:X\to\R$, $\hat A_k:X\to Y$ ($k\ge1$) and $f:X\to\R$ defined by \cref{T6.2P8-1,T6.2P8-3,T6.2P9}, respectively.
Observe that $s_k(x)=0$ and $\hat A_k(x)=0$ for all $x\in X$ with $\norm{r_k(x)}\ge\tau_k\alpha_k(x)$.
If $\norm{r_k(x)}<\tau_k\alpha_k(x)$, then,
by \cref{S4.5-1,S4.5-3}, $x\ne0$ and
\begin{align*}
\big\|x-\norm{x}u_k\big\|= \norm{r_k(x)+(\alpha_k(x)-\norm{x})u_k}\le& \|r_k(x)\|+\norm{x}-\alpha_k(x)
\\\le&
2\|r_k(x)\|<2\tau_k\alpha_k(x)\le\rho_k\|x\|;
\end{align*}
hence, $x/\norm{x}\in B_{\rho_k}(u_k)$, and it follows from \cref{T6.2P5} that
for every $x\in X$ there is at most one $k\in\N$ with
$\hat A_k(x)\ne0$.
Thus, the function $f$
is well defined by \cref{T6.2P9}.
The function $\hat A_k(x)$ is Lipschitz continuous around any point $x\ne0$; hence, $f$ is Lipschitz continuous around any point $x\ne\bx$.
Further, $s_k$ is positively homogenous at $0$, while
$A_k$ is
linear.
Therefore $\hat A_k$ is positively homogenous at $0$, and $f$
is positively homogenous at $\bx$ and, by \cref{CorSS}, \ssstar at $\bx$.
If $\hat A_k(x)\ne0$, then, by \cref{T6.2P8-1,T6.2P8-3,S4.5-1,S4.5-3,6.2-1}, $\norm{r_k(x)}<\tau_k\alpha_k(x)$ and
\begin{align}
\label{S4.5-7}
\|\hat A_k(x)\|\le&\|A_k(x)\|=\alpha_k(x)\|v_k\|\leq
\ga'\|x\|,
\end{align}
and consequently,
$\clm\hat A_k(0)\leq\ga'$.
Hence, $\clm f(\bx)\leq\ga'<\ga$.
Thus, $f\in {\mathcal F}_{{\ovclm}+ss^*}$.

In view of \cref{T6.2P8-1,S4.5-4},
$s_k(x_k-\bx)=1$.
This yields \cref{T6.2P15}.
By \cref{T6.2P15},
$\by\in(F+f)(x_k)$, and in view of
\cref{ssrg}, srg$(F+f)(\bx,\by)=0$, i.e., $F+f$ is not strongly subregular at $(\bx,\by)$.
Hence, \cref{Eq_ssrgLB} holds true.

{\em Case 2: There is an element $u$ of the sequence $(u_k)$ which appears infinitely many times.}
From now on, we consider the stationary subsequence $(u)$ of the sequence $(u_k)$, i.e. without changing the notation, we assume that $(x_k-\bx)/t_k=u\in\Sp_{X}$ for all $k\in\N$.
Thus, subscript $k$ can be dropped in many of the formulas in \cref{S4.5-1,S4.5-3,S4.5-4}:
\begin{gather}
\label{S4.5-8}
\ang{u^*,u}
=\|u^*\|
=1,\;\;
\alpha(x):=\ang{u^*,x},\;\;
r(x):=x-\alpha(x)u,
\;\;
A_k(x):=\al(x)v_k,
\\
\label{S4.5-9}
\al(u)=1,
\quad
r(u)=0.
\end{gather}
By passing to a subsequence again, we can also assume that the sequence $(t_k)$ satisfies conditions \cref{T6.2P22}.
Set
\begin{align}
\label{S4.5-10}
s(x):=&
\begin{cases}
\max\left\{1-\left(\dfrac{\norm{r(x)}} {\alpha(x)}\right)^2,0\right\}&\mbox{if $\alpha(x)>0$},\\
   0&\mbox{otherwise},
\end{cases}
\\
\label{S4.5-11}
\hat A_k(x):=&s(x)A_k(x).
\end{align}
Next we define mappings $T_k:\{x\in X\mid \alpha(x)>0\}\to Y$ ($k\ge2$) and $g,f:X\to Y$ by \cref{T6.2P23.2,T6.2P31,T6.2P32}.
\if{
The mappings $T_k$ are Lipschitz continuous around every
$x\in X$ with $\al(x)>0$
as compositions of mappings having this property, and satisfy
\cref{T6.2P24}.
Hence, $g$ is Lipschitz continuous around every point
$x\in U:=\{x\in X\mid 0<\norm{x}< t_1\}$ with $\al(x)>0$.
The function $s$ vanishes on the open set $U_0:=\{x\mid\alpha(x)<\|r(x)\|\}$.
By \cref{T6.2P31,T6.2P23.2,S4.5-11},
so does $g$.
If $\alpha(x)=\|r(x)\|=0$, then, by \cref{S4.5-8}, $x=0$.
Hence, $\al(x)>0$ for all $x\in U\setminus U_0$, and consequently, $g$ is Lipschitz continuous around every point in $U$.
The mapping $\hat A_k$ is positively homogeneous at $0$, and
satisfies \cref{S4.5-7}.
As a consequence,
$\|g(x)\|\le\ga\,'\|x\|$ for all $x\in U\setminus U_0$,
showing $\clm g(0)\leq \ga'$.
As shown in \cref{S4.3}, for all $x\in U\setminus U_0$, it holds
\begin{gather*}
\max\{\|g(x)-g'(x;x)\|,\|g(x)+g'(x;-x)\|\}<
2\ga\|x\|/\hat k(x),
\end{gather*}
where $\hat k(x)\to\infty$ as $x\to0$.
It follows from \cref{LemSS} that $g$ is \ssstar at $0$.
We conclude that the function $f$ defined by \cref{T6.2P32} belongs to
${\mathcal F}_{{\ovclm}+ss^*}$, and
$\clm f(\bx)<\ga$.
}\fi
As shown in \cref{S4.3}, ${f\in\mathcal F}_{{\ovclm}+ss^*}$, and
$\clm f(\bx)<\ga$.
By \cref{T6.2P23.2,T6.2P31,T6.2P32,S4.5-8,S4.5-9,S4.5-10,S4.5-11}, $\al(t_ku)=t_k$, and
\begin{align*}
f(x_k)=
-g(t_ku)=-T_k(t_ku)&=-\hat A_k(t_ku)=
-s(t_ku)A_k(t_ku)
=-t_kv_k=\by-y_k.
\end{align*}
Thus, $\by\in(F+f)(x_k)$, and in view of
\cref{ssrg}, srg$(F+f)(\bx,\by)=0$, i.e., $F+f$ is not strongly subregular at $(\bx,\by)$.
Hence, \cref{Eq_ssrgLB} holds true.


\section*{Declarations}

\noindent{\bf Funding. }The second author benefited
from the support of the Australian Research Council, project DP160100854, and the European Union's Horizon 2020 research and innovation
programme under the Marie Sk{\l }odowska--Curie Grant Agreement No. 823731
CONMECH.

\noindent{\bf Conflict of interest.} The authors have no competing interests to declare that are relevant to the content of this article.

\noindent{\bf Data availability. }
Data sharing is not applicable to this article as no datasets have been generated or analysed during the current study.


\bibliographystyle{spmpsci}
\bibliography{buch-kr,kruger,kr-tmp}

\end{document}